 \newcommand{\Alt}{\mathfrak{A}}
 \newcommand{\sym}{\mathfrak{S}}
 \newcommand{\alt}{\mathfrak{A}}
 \newcommand{\Ind}{\operatorname{Ind}}
 \newcommand{\Res}{\operatorname{Res}}
\newcommand{\sgn}{\operatorname{sgn}}
\newcommand{\legendre}[2]{\left(\frac{#1}{#2}\right)}
 \newcommand{\B}{\widetilde{B}}
 \newcommand{\bs}{\widetilde{b}}
\newcommand{\N}{\mathbb{N}}
 \newcommand{\Q}{\mathbb{Q}}
 \newcommand{\Z}{\mathbb{Z}}
 \newcommand{\Sym}{\mathfrak{S}}
 \newcommand{\Irr}{\operatorname{Irr}}
\newcommand{\tSym}{\widetilde{S}}
\newcommand{\tAlt}{\widetilde{A}}
\newdimen\shadedBaseline\shadedBaseline=-4mm
\newcommand\ShadedTableau[2][\relax]{%
  \begin{tikzpicture}[scale=0.4,draw/.append style={thick,black},baseline=\shadedBaseline]
    \ifx\relax#1\relax%
    \else % shade the boxes in #1
      \foreach\bx in {#1} { \filldraw[blue!20]\bx+(-.5,-.5)rectangle++(.5,.5); }
    \fi
    \tableauRow=0
    \foreach \Row in {#2} {
       \tableauCol=1
       \foreach\k in \Row {
          \draw(\the\tableauCol,\the\tableauRow)+(-.5,-.5)rectangle++(.5,.5);
          \draw(\the\tableauCol,\the\tableauRow)node{\k};
          \global\advance\tableauCol by 1
       }
       \global\advance\tableauRow by -1
    }
  \end{tikzpicture}%
}
\newcommand\diag[3][\relax]{%
  \begin{tikzpicture}[scale=0.4,draw/.append style={thick,black},baseline=\shadedBaseline]
    \ifx\relax#1\relax%
    \else % shade the boxes in #1
      \foreach\bx in {#1} { \filldraw[blue!20,dashed]\bx+(-.5,-.5)rectangle++(.5,.5); }
    \fi
    \tableauRow=0
    \foreach \Row in {#2} {
       \tableauCol=1
       \foreach\k in \Row {
          \draw(\the\tableauCol,\the\tableauRow)+(-.5,-.5)rectangle++(.5,.5);
          \draw(\the\tableauCol,\the\tableauRow)node{\k};
          \global\advance\tableauCol by 1
       }
       \global\advance\tableauRow by -1
    }
    \foreach \x in {#3} {
      \draw[red,dashed](\x+3,-2-\x)+(-.5,-.5)rectangle++(.5,.5);
    }
  \end{tikzpicture}%
}
\newcommand\frob[7][\relax]{%
  \begin{tikzpicture}[scale=0.4,draw/.append style={black},baseline=\shadedBaseline]
    \ifx\relax#1\relax%
    \else % shade the boxes in #1
      \foreach\bx in {#1} { \filldraw[gray!20,dashed]\bx+(-.5,-.5)rectangle++(.5,.5); }
    \fi
    \tableauRow=0
    \foreach \Row in {#2} {
       \tableauCol=1
       \foreach\k in \Row {
          \draw(\the\tableauCol,\the\tableauRow)+(-.5,-.5)rectangle++(.5,.5);
          \draw(\the\tableauCol,\the\tableauRow)node{\k};
          \global\advance\tableauCol by 1
       }
       \global\advance\tableauRow by -1
    }
        \foreach \x in {#3} {
      \draw[gray!80,dashed](\x+3,-2-\x)+(-.5,-.5)rectangle++(.5,.5);
    }
    \foreach\bx in {#4} {
        \draw[thick]\bx+(-.3,0)rectangle++(.3,.0);
    }
    \foreach\bx in {#5} {
        \draw[thick]\bx+(0,.3)rectangle++(0,-.3);
    }
    \foreach\bx in {#6} {
        \draw[thick,densely dotted]\bx+(-.3,0)--++(.3,.0);
    }
    \foreach\bx in {#7} {
        \draw[thick,densely dotted]\bx+(0,.3)--++(0,-.3);
    }

  \end{tikzpicture}%
}
\newtheorem{theorem}{Theorem}[section] % 1st argument is your name for it
\newtheorem{lemma}[theorem]{Lemma}     % 2nd argument is what is printed
\newtheorem{corollary}[theorem]{Corollary}
\newtheorem{proposition}[theorem]{Proposition}
\newtheorem{notation}[theorem]{Notation}
\newtheorem{example}[theorem]{Example}
\theoremstyle{definition}
\newtheorem{remark}[theorem]{Remark}
\title[]% end with percent
{Galois Automorphisms And Littlewood Decompositions}
\author{Olivier Brunat}
\address{Universit\'e Paris Cit\'e\\ Institut de math\'ematiques de
         Jussieu -- Paris Rive Gauche\\ UFR de math\'e\-matiques\\ Case
7012\\ 75205 Paris Cedex 13\\
         France.}
\email{olivier.brunat@imj-prg.fr}
\author{Rishi Nath}
\address{York College, City University of New York, 
94--20 Guy R. Brewer Blvd. \\
Jamaica, NY 11435\\
USA
}
\email{rnath@york.cuny.edu}
\subjclass[2010]{Primary 20C30; Secondary 20C15}
\begin{document} 

\begin{abstract} 
    The study of modular representation theory of the double covering
groups of the symmetric and alternating groups reveals rich and subtle
combinatorial and algebraic phenomena involving their irreducible
characters and the structure of their $p$-blocks, where $p$ is an odd
prime number. In this paper, we investigate the action of certain Galois
automorphisms, those that act on $p'$-roots of unity by a power of $p$, on
spin characters, with an emphasis on their interaction with perfect
isometries and block theory. In particular, we prove that perfect
isometries constructed by the first author and J.\,B. Gramain in
\cite{BrGr3}, which were used to establish a weaker form of the
Kessar--Schaps conjecture, remain preserved under this Galois action
whenever certain natural compatibility conditions occur.
\end{abstract} 
\maketitle

\section{Introduction}

    The interplay between Galois theory and modular representation theory
has long been a source of deep and intriguing questions. A particularly
influential viewpoint in this area was introduced by G. Navarro in his
\emph{Galois refinement} of the McKay conjecture~\cite{NavarroGalois}.
While the original conjecture, which concerns characters of degree prime
to a fixed prime $p$,  predicts a bijection between such characters of a
finite group and those of the normalizer of a Sylow $p$-subgroup (a result
now proven, following nearly twenty years of progress built on the
foundational work of Isaacs, Malle, and Navarro \cite{IMN}, and finalized
by the recent contribution of Cabanes and Späth \cite{CabSpath}), Navarro
proposed that such bijections could in fact be chosen to be equivariant
under the action of a certain subgroup $\mathcal H$ of the absolute Galois
group $\operatorname{Gal}(\overline{\mathbb{Q}}/\mathbb{Q})$. This
subgroup acts on $p'$-roots of unity, that is a complex root of unity
whose order is prime to $p$, via powers of~$p$. This refined conjecture,
now known as the \emph{Galois-McKay conjecture}, has since been the
subject of extensive research (see for example~\cite{team}). In
particular, it was reduced to a statement about finite quasi-simple groups
by Navarro–Späth–Vallejo in~\cite{NavarroSpaethVallejo}, a key step that
has guided much of the recent progress on the topic.

    Inspired by the general principle that character correspondences
should encode the action of Galois automorphisms, we extend the
McKay–Navarro program to a more general setting, namely, to bijections
between natural subsets of characters (not just $p'$-characters or
height-zero characters in $p$-blocks), and, in particular we focus on the
double covering groups of the symmetric and alternating groups for
\emph{an odd prime $p$}. 
    These central extensions give rise to \emph{spin characters}, that is,
faithful irreducible characters of the covering groups, whose values
reflect subtle arithmetic and combinatorial features, notably partitions
with distinct parts (called \emph{bar-partitions}) and specific roots of
unity. This makes their behavior under Galois automorphisms especially
intricate to analyze.
\medskip

    To study the action of $\mathcal H$ on spin characters of the double
covering groups, we construct a character correspondence involving a
product-type group
\begin{equation}
\label{eq:Gintro}
G=\tSym_r \, \widehat{\times} \, \tSym_{pw}.
\end{equation}
    The parameters $r$ and $w$ arise naturally from the combinatorics of
bar-partitions, namely for such a partition $\lambda$ labelling one or two
irreducible spin characters, $r$ is the size of its $p$-bar core, and $w$
is determined from its $p$-bar quotient.
    This setup, a special case of the twisted products studied by
Humphreys~\cite{Humphreys}, provides an algebraic framework for the
$p$-bar Littlewood decomposition, which decomposes a bar-partition into
$p$-bar core and $p$-bar quotient, thereby reducing the analysis of spin
characters to simpler components. More precisely, this interpretation
connects the \emph{combinatorial level}, that is the decomposition of
bar-partitions into $p$-bar-core and $p$-bar-quotient, and the
\emph{algebraic level}, where the representation theory of $G$ reflects
this structure. This perspective not only clarifies the conceptual
approach and provides practical tools for the analysis of
character-theoretic phenomena at a fixed prime $p$, but also sheds new
light on earlier constructions by placing them within a unified framework.

    In this paper, we illustrate this principle through a concrete
example, culminating in one of our main results: the correspondence we
construct is $\mathcal H$-equivariant. In other words, the action of
$\mathcal H$ on spin characters respects the combinatorial structure
encoded in the $p$-bar Littlewood decomposition. This compatibility allows
us to translate the action of $\mathcal H$ into more accessible
combinatorial terms.

\smallskip
    Extending this perspective to the level of $p$-blocks, we recall that
these play a central role in modular representation theory of finite
groups, organizing irreducible characters according to their modular
behavior at the prime $p$. In the context of the Galois-McKay program, it
is natural to consider the action of $\mathcal H$ not only on individual
characters, but also on entire $p$-blocks. Given two $p$-blocks of
(possibly distinct) finite groups, we say they are \emph{Navarro-Galois
equivalent} if there exists a bijection between their sets of irreducible
characters that is equivariant under the action of $\mathcal H$.

    Building on the correspondence introduced earlier, one of the main
results of this paper is the construction of a bijection between the spin
$p$-blocks of the double cover of the symmetric and alternating group, and
certain $p$-blocks of the twisted product group $G$ introduced in
Equation~(\ref{eq:Gintro}) (or its alternating counterpart). Here, $r$
denotes the size of the $p$-bar core labeling the spin block, and $w$ is
its $p$-bar weight. 

    We show that this bijection is $\mathcal H$-equivariant and preserves
important block-theoretic invariants, such as defect groups and character
$p$-heights (see Theorem~\ref{thm:bijblockstildehump}). Crucially, this
lifts the reduction principle introduced above to a deeper structural
level, showing that the modular representation theory of spin blocks can
be studied more effectively within the accessible framework provided by
$G$.
    Beyond its implications for the Galois action on character values,
this bijection is of independent interest as a significant structural
result in the representation theory of spin blocks. 
\par\smallskip
    As an application of our results, we deduce that various perfect
isometries between $p$-blocks of spin characters, constructed in earlier
work, are in fact compatible with the action of $\mathcal H$ (see
Remarks~\ref{rk:perfectdirect}, \ref{rk:perfectcross} and
\ref{rk:isoalt}).

    In 1990, within the framework of Brou\'e's conjecture~\cite{Broue},
Michel Enguehard proved~\cite{Enguehard} that two non-spin $p$-blocks of
$\tSym_n$ and $\tSym_m$ are perfectly isometric if and only if they share
the same $p$-weight. Later, J. Chuang and R. Rouquier extended this to
derived equivalences through $\mathfrak{sl}_2$-categorification. More
recently, the first author and J.\,B. Gramain generalized in~\cite{BrGr3}
Enguehard's results to alternating groups and spin blocks of double
covers, constructing perfect isometries that in particular support a
weaker version of a conjecture stated by R. Kessar and M. Schaps
in~\cite{Kessar-Schaps}.

    A key result of this paper is that the perfect isometries under
consideration are $\mathcal{H}$-equivariant, assuming certain natural
compatibility conditions on the cores.

    Note that the bijections we construct in this paper are compatible
with Galois actions and preserve not only the $p'$-degrees but also the
$p$-heights of all characters, not just those of height zero as in the
classical McKay conjecture and its blockwise refinements. Although our
results are established in the symmetric and spin setting, they represent
a promising generalization both refining our understanding of the
interplay between Galois actions and block theory, and suggests further
directions in the study of $\mathcal{H}$-equivariant correspondences.
\vspace{0.42em}\par
    The article is organized as follows. In Section~\ref{sec:part1}, we
introduce the basic setting and notation used throughout the paper.
Section~\ref{sec:part2} recalls the $p$-bar Littlewood decomposition of
bar-partitions. While we refer to~\cite{olsson} for details, our
presentation follows the approach of~\cite{BrNa2}, using the framework of
the \emph{pointed abacus}.
    A key combinatorial result is Lemma~\ref{lemma:longueurlittelwoodec},
which relates the number of parts of a partition to those of its $p$-bar
Littlewood components, and connects closely with the push-and-pull
procedure from the theory developed in~\cite{BrNa2}.

    Section~\ref{sec:actiongalois} focuses on recalling the values of
irreducible spin characters of $\tSym_n$ and $\tAlt_n$, setting the stage
to describe the action of Galois-Navarro automorphisms on them. The main
result (Theorem~\ref{thm:little}) allows us to reduce the study of this
action to spin characters labeled by $p$-bar cores and \emph{$p$-bar
cocore partitions}, that is, partitions with empty $p$-bar cores. The
proof relies heavily on combinatorial tools such as the push-and-pull
process and the fundamental notion of \emph{bar-abacus pairing}. Notably,
the statement involves some subtle features, including a seemingly
unexpected sign factor, whose explanation is deferred to later sections.

    In Section~\ref{sec:humphreys}, we introduce the spin representations
of the Humphreys twisted product groups $\widetilde{G}_1 \widehat{\times}
\widetilde{G}_2$, where $\widetilde{G}_1$ and $\widetilde{G}_2$ belong to
a specific class $\mathcal{G}$. Following Humphreys~\cite{Humphreys}, we
provide explicit character values for the irreducible spin representations
of these twisted products as well as for their index-two subgroups.

    Building on this framework, in Section~\ref{sec:main}, we analyze the
group $G = \tSym_r \widehat{\times} \tSym_{pw}$ and establish a
correspondence that associates to each irreducible spin character of
$\tSym_n$ (labeled by a bar-partition $\lambda$) a spin character of $G$,
and similarly for $\tAlt_n$ with its corresponding ``alternating
subgroup'' $G^+$. This construction clarifies the behavior of spin
characters and their Galois action by expressing them in terms of their
decomposed components. More precisely, Theorem~\ref{thm:main} proves that
this correspondence is $\mathcal H$-equivariant.

    In Section~\ref{sec:pblocks}, we focus on the structure of spin
$p$-blocks of the double covers $\tSym_n$ and $\tAlt_n$. Recall that such
a spin block is naturally parameterized by a $p$-bar core $\kappa$ and an
integer $w$, called its $p$-bar weight. 
In Theorem~\ref{thm:blockhat}, we show that the groups $G$ and $G^+$
introduced earlier also possess $p$-blocks naturally parameterized by the
same data $(\kappa, w)$, and we describe important invariants of these
blocks, such as their defect groups and the $p$-height of their
characters.  
Then, Theorem~\ref{thm:bijblockstildehump} establishes a bijection between
the spin $p$-block of $\tSym_n$ (resp. $\tAlt_n$) labeled by $\kappa$ and
$w$ and the corresponding $p$-block of $G$ (resp. $G^+$) with the same
parameters. This bijection preserves key invariants, including defect
groups and character $p$-heights, and is moreover
$\mathcal{H}$-equivariant.

    From Theorem~\ref{thm:bijblockstildehump}, we deduce several important
consequences regarding Navarro-Galois equivalences of spin and non-spin
$p$-blocks:

\begin{itemize}

\item[$\star$] Two spin $p$-blocks of the double covers of symmetric or
alternating groups, with the same weight and sign, and labeled by $p$-bar
cores satisfying the same rationality condition, are Navarro-Galois
equivalent (see Theorem~\ref{thm:blocksmemesigne}). More precisely, the
perfect isometry constructed in~\cite{BrGr3} between such blocks is
$p$-height and $p$-defect preserving, and $\mathcal H$-equivariant; see
also Remark~\ref{rk:perfectdirect}. 

\item[$\star$] Any two spin $p$-blocks of the double covers, one from the
symmetric group with negative sign and the other from the alternating
group with positive sign, having the same weight and satisfying the same
rationality condition are Navarro-Galois equivalent (see
Theorem~\ref{thm:crossing}). Similarly to the non-crossover case, the
perfect isometry constructed in \cite{BrGr3} preserves defect, $p$-height,
and is $\mathcal{H}$-equivariant, as noted in
Remark~\ref{rk:perfectcross}.
\end{itemize}

    However, as shown in Remark~\ref{rk:fails}, the remaining crossover
case, when the sign of the spin $p$-block of $\tSym_n$ is positive and
that of $\tAlt_n$ is negative, is not covered by
Theorem~\ref{thm:crossing}. 
    In this situation, the two blocks, although related by a perfect
isometry, are \emph{not} $\mathcal{H}$-equivariant. Understanding the
precise reasons behind this failure of $\mathcal{H}$-equivariance would be
an interesting direction for future research, as it likely stems from
intrinsic structural differences in the blocks themselves.

    Finally, although this case lie outside the spin setting, our approach
still applies. By using the non-spin characters of the groups \(G\) and
\(G^+\) and following the same strategy, we establish in
\S\ref{subsec:nonspin} an analogous result in this context: we prove that
two non-spin \(p\)-blocks of the double covers of alternating groups,
labeled by self-conjugate \(p\)-cores, and having the same weight and the
same rationality condition, are Navarro-Galois equivalent; see
Theorem~\ref{thm:nonspin} and Remark~\ref{rk:isoalt}.

\section{Notation and preliminaries} 
\label{sec:part1}
    For any group finite $G$, as usually, we denote by $\Irr(G)$ the set
of complex irreducible characters of $G$.

\subsection{Groups with index two subgroup}
\label{subsec:index2}

Let $G$ be a finite group and $G^+$ be a subgroup of $G$ of index $2$.
Then, there exists a surjective group homomorphism
\begin{equation}
\label{eq:defsignature}
\varepsilon_{G}:G\longrightarrow \{-1,1\}
\end{equation}
with kernel $\ker(\varepsilon_G)=G^+$. 
    In this setting, two natural group actions arise. First,
$\langle\varepsilon_G\rangle$ acts on $\Irr(G)$ by tensoring with the
one-dimensional character $\varepsilon_G$. Second, the quotient $G/G^+$
induces an action on $\Irr(G^+)$ by conjugation. 

    Clifford theory~\cite[Chapter 6]{isaacs} provides a correspondence
between the orbits of these two actions. More precisely:
\begin{itemize}
\item[$\star$] An $\langle \varepsilon_G\rangle$-orbit
$\{\chi,\varepsilon_G\otimes \chi\}$ of size $2$ in $\Irr(G)$ corresponds
to a  $G/G^+$-orbit $\{\psi\}$ of size $1$ in $\Irr(G^+)$, with the link
given by
$$\psi=\Res_{G^+}^G(\chi)=\Res_{G^+}^G(\varepsilon_G\otimes \chi)\quad
\text{and}\quad \Ind_{G^+}^G(\psi)=\chi+\varepsilon_G\otimes \chi.$$
\item[$\star$] An $\langle\varepsilon_G\rangle$-orbit $\{\chi\}$ of size
$1$ in $\Irr(G)$ corresponds to a $G/G^+$-orbit $\{\psi,{}^g\psi\}$ of
size $2$ in $\Irr(G^+)$, where $g\in G\backslash G^+$, with
$$\Res_{G^+}^G(\chi)=\psi+{}^g\psi\quad\text{and}\quad
\chi=\Ind_{G^+}^G(\psi)=\Ind_{G^+}^G({}^g\psi).$$
\end{itemize}

    Assume that $\Lambda$ is a set parametrizing the $\langle
\varepsilon_G\rangle$-orbits in $\Irr(G)$, or equivalently, the
$G/G^+$-orbits in $\Irr(G^+)$. For $\lambda\in \Lambda$, we denote by
$$\{\chi_{\lambda}\}\quad\text{and}\quad\{\chi_{\lambda}^+,\chi_{\lambda}^-\}$$
the corresponding orbits.

\begin{remark}
\label{rk:notambigu}
    Note that the notation may be ambiguous, as $\chi_{\lambda}$, or
$\chi_{\lambda}^{\pm}$ may denote characters in either $\Irr(G)$ or
$\Irr(G^+)$, depending on the context.
\end{remark}

    In the following, a character lying in an orbit of size $1$ is said to
be \emph{self-associate}, whereas two characters forming an orbit of size
$2$ are said to be \emph{associate}. Such characters are also referred to
as \emph{non-self-associate} characters. 

    For any $\lambda\in\Lambda$, we also define the corresponding
\emph{difference character}, a class function either on $G$, or on $G^+$,
in the following way. If $\lambda$ labels a spin orbit of size $1$ in
$\Irr(G)$, then it labels an orbit of size $2$ in $\Irr(G^+)$. Otherwise,
it labels a spin orbit of size $2$ in $\Irr(G)$. In either case, we define
\begin{equation}
\label{eq:chardiff}
\Delta_{\lambda}=\chi_{\lambda}^+-\chi_{\lambda}^-.
\end{equation}

\subsection{Central extensions}
    Following Humphreys~\cite{Humphreys}, we consider the class $\mathcal
G$ of finite groups $\widetilde G$ equipped with a central involution $z$
and an index-two subgroup $\widetilde{G}^+$. We define
\begin{equation}
\label{eq:stilde}
s_{\widetilde G}:\widetilde{G}\longrightarrow \Z/2\Z
\end{equation}
to be the group homomorphism with kernel $\widetilde G^+$. In particular,
$\varepsilon_{\widetilde G}:\widetilde G\longrightarrow \{-1,1\}$ defined
in~(\ref{eq:defsignature}) is obtained by composing $s_{\widetilde G}$
with the unique faithful character of $\Z/2\Z$. We set
$G=\widetilde{G}/\langle z\rangle$. 
    Let $\chi\in\Irr(\widetilde G)$. Then either $\chi(z)=\chi(1)$ or
$\chi(z)=-\chi(1)$. We say that $\chi$ is a \emph{non-spin character} in
the first case, and a \emph{spin} character in the second case. We will
denote by $\Irr(\widetilde G)^+$ and $\Irr(\widetilde G)^-$ the sets of
irreducible non-spin characters and spin characters of $\widetilde G$,
respectively. Any $\chi\in\Irr(\widetilde G)^+$ has $z$ in its kernel and
therefore arises as the inflation (or pullback) through the natural
projection $\pi:\widetilde{G}\longrightarrow G$ of an irreducible
character of $G$. Hence, $\Irr(\widetilde G)^+$ can be naturally
identified with $\Irr(G)$.

\subsection{Double covering groups of the Symmetric and Alternating groups}

    Let $n$ be a positive integer. We consider the double covering group
$\tSym_n$ of the symmetric group $\sym_n$ defined by the presentation
$$
\tSym_n=\langle t_1,\ldots,t_{n-1},z\mid z^2=1,\,t_i^2=z,
(t_it_{i+1})^3=z,(t_it_j)^2=z\ (|i-j|\geq 2)
\rangle.
$$
We recall that we have the exact sequence
\begin{equation}
\label{eq:projSn}
1\longrightarrow \langle z\rangle\longrightarrow
\tSym_n\longrightarrow \sym_n\longrightarrow 1,
\end{equation}
where $\pi:\tSym_n\longrightarrow \sym_n$ denotes the natural projection.
For any partition $\lambda$, let $|\lambda|$ denote its size (that is, the
sum of its parts) and let $\ell(\lambda)$ be the number of parts. We then
introduce in the usual way \emph{the sign} of $\lambda$ by
\begin{equation}
\label{eq:sign}
\sgn(\lambda)=(-1)^{|\lambda|-\ell(\lambda)}.
\end{equation}
    We also denote by $\sgn:\Sym_n\longrightarrow \{\pm 1\}$
the usual \emph{signature character} of $\Sym_n$, in other words,
$\sgn(x)$ is equal to the sign of the permutation $x$, which coincides
with the sign of the partition given by the cycle type of $x$.
    We define the \emph{sign character} of $\tSym_n$ by setting
$\varepsilon= \operatorname{sgn}\circ \pi$. Note that
$\varepsilon=\varepsilon_{\tSym_n}$, as defined in
(\ref{eq:defsignature}). In particular, $\tSym_n\in\mathcal G$. We set
$\tAlt_n=\tSym_n^+$. Note that $\sym_n\in\mathcal G$ with respect to
$\operatorname{sgn}$ and that $\sym_n^+=\Alt_n$. Furthermore, we have
$\tAlt_n=\pi^{-1}(\alt_n)$ which is the double covering group of the
alternating group $\alt_n$. 

    We remark that $\Irr(\tSym_n)^+$ and $\Irr(\tSym_n)^-$ are
$\varepsilon$-stable, meaning that the associate characters of a spin,
(resp.\!\!\! non-spin) character are of the same type. It is well known
that the non-spin characters of $\tSym_n$ (that is, the irreducible
characters of $\Sym_n$) are naturally labeled by the set $\mathcal P_n$ of
partitions of $n$. 

    Moreover, in~\cite{schur}, Issai Schur gave a parametrization of the
irreducible spin characters of $\tSym_n$. For this purpose, consider the
set $\mathcal D_n$ of partitions $\lambda$ of $n$ with distinct parts. Any
such partition $\lambda\in\mathcal D_n$ is also called a
\emph{bar-partition} of $n$. The set $\mathcal D_n$ labels the $\langle
\varepsilon\rangle$-orbits in $\Irr(\tSym_n)$. 
    More precisely, let $\mathcal D_n^+$ (resp. $\mathcal D_n^-$) be the
subset of $\lambda\in\mathcal D_n$ consisting of partitions $\lambda$ such
that $\sgn(\lambda)=1$ (resp.\! $-1$). Then, the elements of $\mathcal
D_n^+$ label the $\langle\varepsilon\rangle$-orbits of size $1$, while
those of $\mathcal D_n^-$ label orbits of size $2$.

    To describe the irreducible spin characters of $\tSym_n$ and
$\tAlt_n$, we use the symbol ``$\xi$'' in a manner consistent with the
notation introduced earlier. More precisely, as explained in
\S\ref{subsec:index2}, for $\lambda\in \mathcal D_n$, we denote by 
\begin{equation}
\label{eq:xidef}
\xi_{\lambda},\quad
\xi_{\lambda}^+ \quad\text{and}\quad \xi_{\lambda}^-
\end{equation}
the corresponding characters of either $\tSym_n$ or $\tAlt_n$, depending
on the context.
According to our convention:
\begin{itemize}
\item[$\star$] If $\sgn(\lambda)=1$ then $\xi_{\lambda}$ is a
self-associate character of $\tSym_n$, and $\xi_{\lambda}^+$ and
$\xi_{\lambda}^-$ are two associate spin characters of $\Irr(\tAlt_n)$. 
\item[$\star$] if $\sgn(\lambda)=-1$, then $\xi_{\lambda}^+$ and
$\xi_{\lambda}^-$ are two associate spin characters of $\tSym_n$, and
$\xi_{\lambda}$ is a self-associate character of $\tAlt_n$.
\end{itemize}

\section{Combinatorics of partitions and bar-partitions} 
\label{sec:part2}

\subsection{Frobenius symbol and abacus representation of a partition}
\label{subsec:frob}

    Let $\lambda$ be a partition of a positive integer $n$, and let $t$ be
a positive integer. In this section, we recall how to associate the
Frobenius symbol to $\lambda$.
For our purposes, we follow the approach outlined in~\cite[Section 2]{BrNa2}.

    First, recall that $\lambda$ is completely determined by its
\emph{Young diagram} $[\lambda]$. The boxes of $[\lambda]$ are indexed by
coordinate $(i,j)$ in matrix notation, and a box with coordinate $(j,j)$
is called \emph{a diagonal box} of $\lambda$.  

    To each diagonal box, we associate two nonnegative integers: the
number of boxes to its right in the same row, and the number of boxes
above it in the same column. Collecting these numbers over all diagonal
boxes gives two finite sets of nonnegative integers, denoted
\(\mathcal{A}_\lambda\) and $\mathcal{L}_\lambda$, called the sets of
\emph{arms} and \emph{legs} of $\lambda$, respectively.
The \emph{Frobenius symbol} of \(\lambda\) is then defined as
$$
\mathcal{F}_\lambda = (\mathcal{L}_\lambda \mid \mathcal{A}_\lambda).
$$
    Note that $|\mathcal{L}_\lambda| = |\mathcal{A}_\lambda|$, and
conversely, any pair of finite sets $L$ and $A$ of equal cardinality
uniquely determines a partition $\lambda$ such that
$$
\mathcal{F}_\lambda = (L \mid A).
$$
 An arm and a leg forming a diagonal hook in the Young diagram are said 
 to be \emph{paired}.

\begin{example}
Let $L=\{1,2,3\}$ and $A=\{0,2,3\}$. Then the Young diagram of the
corresponding partition $\lambda$ is
\medskip
\begin{center} 
\frob[(1,0),(2,-1),(3,-2)]
    {{\relax,\relax,\relax,\relax},{\relax,\relax,\relax,\relax},{\relax,\relax,\relax},{\relax,\relax\relax,\relax}}
    {}
    {(2,0),(3,0),(4,0),(3,-1),(4,-1)}
    {(1,-1),(1,-2),(1,-3),(2,-2),(2,-3),(3,-3)}
    {}
    {}
\end{center}
Thus, $\lambda=(4,4,3,3)$. In this example, the paired pairs are 
$$(1,0),\quad (2,2)\quad\text{and}\quad (3,3).$$
\end{example}
\smallskip

    Let $L$ and $A$ be two finite subsets of the non-negative integers. We
associate to them an abacus $\mathcal T_{L,A}$, equipped with a
\emph{fence} $\mathfrak f$. Slots both above and below the fence are
labeled by the set of nonnegative integers. Each slot contains a bead,
which can be either white or black. When $L$ and $A$ have the same
cardinality, the abacus $\mathcal T_{L,A}$ is called \emph{a pointed
abacus}. In this case, the number of black beads above the fence equals
the number of white beads below it.

\begin{notation} 
\label{not:beadcolor} 
    Throughout the following, if $x \in \N$ labels a slot (either above or below
the fence), then let $b_x$ denote the color of the bead occupying that position.
\end{notation}

Given a  partition $\lambda$ with Frobenius symbol $\mathcal
F_{\lambda}=(\mathcal L_{\lambda} \mid \mathcal A_{\lambda})$, we can
uniquely associate to it its \emph{pointed abacus} $\mathcal T_{\mathcal
L_{\lambda},\mathcal A_{\lambda}}$. 

\begin{example}
We give the pointed abacus of $\lambda=(4,4,3,3)$:
\medskip

\begin{center}
\begin{tikzpicture}[line cap=round,line join=round,>=triangle
45,x=0.7cm,y=0.7cm, scale=0.8,every node/.style={scale=0.8}]

\draw [dash pattern=on 2pt off 2pt](-1.5,1.5)-- (0.5,1.5);
\draw(-3,1.5)node{$\mathfrak f$};

%%%%%%%%%%%%%%%%%%%%%%%
\draw (-0.9,-2.3) node[anchor=north west] {$0$};

\draw (-0.6,5.3)-- (-0.6,-2.3);

\begin{scriptsize}
	
\draw (-0.6,2)[fill=black] circle (2.5pt);
\draw (-0.6,3) circle (2.5pt);
\draw (-0.6,4)[fill=black] circle (2.5pt);
\draw (-0.6,5)[fill=black] circle (2.5pt);

\draw (-0.6,1)[fill=black] circle (2.5pt);

\draw (-0.6,0)  circle (2.5pt);
\draw (-0.6,-1) circle (2.5pt);
\draw (-0.6,-2) circle (2.5pt);
\end{scriptsize}

%%%%%%%%%%%%%%%%%%%%%%%%

\end{tikzpicture}
%&
%\hspace{2cm}
%&
%\begin{tikzpicture}[line cap=round,line join=round,>=triangle
%45,x=0.7cm,y=0.7cm, scale=0.8,every node/.style={scale=0.8}]

%\draw [dash pattern=on 2pt off 2pt](-2.5,1.5)-- (1.5,1.5);
%\draw(-3,1.5)node{$\mathfrak f$};

%%%%%%%%%%%%%%%%%%%%%%%%%%%%
%\draw (-2,-0.3) node[anchor=north west] {$0$};

%\draw (-1.7,3.3)-- (-1.7,-0.3);

%\begin{scriptsize}

%\draw (-1.7,2)[fill=black] circle (2.5pt);

%\draw (-1.7,3)[fill=black] circle (2.5pt);

%\draw (-1.7,1) circle (2.5pt);

%\draw (-1.7,0)[fill=black] circle (2.5pt);
%\end{scriptsize}

%%%%%%%%%%%%%%%%%%%%%%%%
%\draw (-0.9,-0.3) node[anchor=north west] {$1$};

%\draw (-0.6,3.3)-- (-0.6,-0.3);

%\begin{scriptsize}
	
%\draw  (-0.6,2) circle (2.5pt);
%\draw (-0.6,3)  circle (2.5pt);

%\draw (-0.6,1) circle (2.5pt);
%\draw (-0.6,0)[fill=black] circle (2.5pt);

%\end{scriptsize}

%%%%%%%%%%%%%%%%%%%%%%%%
%\draw (0.2,-0.3) node[anchor=north west] {$2$};

%\draw (0.5,3.3)-- (0.5,-0.3);

%\begin{scriptsize}

%\draw  (0.5,2)[fill=black] circle (2.5pt);
%\draw (0.5,3) circle (2.5pt);
%\draw (0.5,1)[fill=black] circle (2.5pt);
%\draw (0.5,0) circle (2.5pt);

%\end{scriptsize}
%\end{tikzpicture}\\
%\text{pointed $1$-abacus of $\lambda$}&&\text{pointed $3$-abacus of
%$\lambda$}
%\end{tabular}
\end{center}
\end{example}

\subsection{Littlewood decomposition of a bar-partition}
\label{subsec:barabaque}

    In this section, we assume that $t$ is \emph{an odd positive integer}.
We begin by introducing the notions of the $t$-bar-abacus, twisted
$t$-bar-abacus, $t$-bar core, $t$-bar quotient and $t$-bar characteristic
vector of a bar-partition. Although the concepts of $t$-bar-abacus and twisted
$t$-bar-abacus are not explicitly presented using this terminology
in~\cite{olsson}, we will essentially follow the approach outlined there.

    Let $\lambda=(\lambda_1,\ldots,\lambda_s)\in\mathcal D_n$. We
associate to $\lambda$ an abacus with $t$ runners labeled from $0$ to
$t-1$. The slots of each runners are labeled by $\N$ from bottom to top.
We begin by placing a white bead to each slot. Then, for each $1\leq j\leq
s$, writing $\lambda_j=tk_j+r_j$ with $0\leq r_j\leq t-1$, we place a
black in the $k_j$-th slot of the $r_j$-th runner. The resulting
$t$-abacus is called the \emph{$t$-bar-abacus} of $\lambda$.

\begin{example}
Consider $\lambda=(1,2,3,5)\in\mathcal D_{10}$. Then its
$3$-bar-abacus is
\medskip
\begin{center}
\begin{tikzpicture}[line cap=round,line join=round,>=triangle
45,x=0.7cm,y=0.7cm, scale=0.8,every node/.style={scale=0.8}]

\draw (-2.5,-0.3)-- (1.5,-0.3);

%%%%%%%%%%%%%%%%%%%%%%%%%%%
\draw (-2,-0.3) node[anchor=north west] {$0$};

\draw (-1.7,1.3)-- (-1.7,-0.3);

\begin{scriptsize}

\draw (-1.7,0) circle (2.5pt);

\draw (-1.7,1)[fill=black] circle (2.5pt);

\end{scriptsize}

%%%%%%%%%%%%%%%%%%%%%%%
\draw (-0.9,-0.3) node[anchor=north west] {$1$};

\draw (-0.6,1.3)-- (-0.6,-0.3);

\begin{scriptsize}
	
\draw  (-0.6,0)[fill=black] circle (2.5pt);
\draw (-0.6,1) circle (2.5pt);

\end{scriptsize}

%%%%%%%%%%%%%%%%%%%%%%%
\draw (0.2,-0.3) node[anchor=north west] {$2$};

\draw (0.5,1.3)-- (0.5,-0.3);

\begin{scriptsize}

\draw  (0.5,0)[fill=black] circle (2.5pt);
\draw (0.5,1)[fill=black] circle (2.5pt);

\end{scriptsize}
\end{tikzpicture}
\end{center}
\end{example}
\medskip

\begin{notation}
\label{not:longueurpart} 
    Assume that $x \in \N$ labels a slot on the $r$-th runner
$\mathcal{R}_r$, where $0 \leq r \leq t-1$, of a $t$-bar-abacus, such that
the bead $b_x$ is black. We denote by $d_x$ the length of the part of
$\lambda$ corresponding to $x$, given by
\begin{equation}
\label{eq:longueurpat}
d_x = t x + r.
\end{equation}
\end{notation}

    Now, starting from a $t$-bar-abacus $\mathcal{R}$ with runners
${\mathcal R}_0, \ldots, {\mathcal R}_{t-1}$, we construct a
$\frac{1}{2}(t+1)$-abacus $\mathcal{R}'$, called \emph{the twisted
$t$-bar-abacus of $\lambda$}, defined as follows. Its first runner ${\mathcal
R}'_0$ is simply ${\mathcal R}_0$, and for each $1 \leq r \leq
\frac{1}{2}(t-1)$, the runner ${\mathcal R}'_r$ is equipped with a fence
$\mathfrak{f}$ such that:
\begin{itemize}
\item[$\star$] Above the fence, we place the runner $\mathcal{R}_r$.
\item[$\star$] Let $\mathcal{R}_r^*$ be the runner obtained from
$\mathcal{R}_{t-r}$ by reversing the colors of all its beads. Then, each
bead in slot $j \in \N$ of $\mathcal{R}_r^*$ is placed in the $j$-th slot
below the fence in ${\mathcal R}'_r$.
\end{itemize}

\begin{example}
\label{ex:bar}
Continuing the previous example, the twisted $3$-bar-abacus of
$\lambda=(1,2,3,5)$ is given by
\medskip
\begin{center}
\begin{tikzpicture}[line cap=round,line join=round,>=triangle
45,x=0.7cm,y=0.7cm, scale=0.8,every node/.style={scale=0.8}]

\draw (-2,-0.3)-- (-1.4,-0.3);
\draw [dash pattern=on 2pt off 2pt](-1,1.5)-- (-0.1,1.5);

%%%%%%%%%%%%%%%%%%%%%%%%%%%
\draw (-2,-0.3) node[anchor=north west] {$0$};

\draw (-1.7,1.3)-- (-1.7,-0.3);

\begin{scriptsize}
\draw (-1.7,0) circle (2.5pt);

\draw (-1.7,1)[fill=black] circle (2.5pt);

\end{scriptsize}

%%%%%%%%%%%%%%%%%%%%%%%
\draw (-0.9,-0.3) node[anchor=north west] {$1$};

\draw (-0.6,3.3)-- (-0.6,-0.3);

\begin{scriptsize}

\draw  (-0.6,3)circle (2.5pt);
\draw (-0.6,2)[fill=black]  circle (2.5pt);
	
\draw  (-0.6,0)circle (2.5pt);
\draw (-0.6,1) circle (2.5pt);

\end{scriptsize}

\end{tikzpicture}
\end{center}
\end{example}
\medskip

    Note that the process is bijective: one can recover a bar-partition
$\lambda$ from a twisted $t$-bar-abacus by constructing its
$t$-bar-abacus, where the black beads correspond to the parts of
$\lambda$. 

% \newpage
\begin{example}
Let 
\medskip

\begin{center}
\begin{tabular}{lll}
\begin{tikzpicture}[line cap=round,line join=round,>=triangle
45,x=0.7cm,y=0.7cm, scale=0.8,every node/.style={scale=0.8}]

\draw (-2,-0.3)-- (-1.4,-0.3);
\draw [dash pattern=on 2pt off 2pt](-1,1.5)-- (1,1.5);

%%%%%%%%%%%%%%%%%%%%%%%%%%%
\draw (-2,-0.3) node[anchor=north west] {$0$};

\draw (-1.7,1.3)-- (-1.7,-0.3);

\begin{scriptsize}

\draw (-1.7,0) circle (2.5pt);

\draw (-1.7,1)[fill=black] circle (2.5pt);

\end{scriptsize}

%%%%%%%%%%%%%%%%%%%%%%%
\draw (-0.9,-0.3) node[anchor=north west] {$1$};

\draw (-0.6,3.3)-- (-0.6,-0.3);

\begin{scriptsize}
	
\draw  (-0.6,3)[fill=black] circle (2.5pt);
\draw (-0.6,2) circle (2.5pt);
\draw  (-0.6,0)[fill=black] circle (2.5pt);
\draw (-0.6,1) circle (2.5pt);

\end{scriptsize}

%%%%%%%%%%%%%%%%%%%%%%%
\draw (0.2,-0.3) node[anchor=north west] {$2$};

\draw (0.5,3.3)-- (0.5,-0.3);

\begin{scriptsize}
	
\draw  (0.5,3) circle (2.5pt);
\draw (0.5,2) circle (2.5pt);

\draw  (0.5,0) circle (2.5pt);
\draw (0.5,1)[fill=black] circle (2.5pt);

\end{scriptsize}
\end{tikzpicture}
&
\hspace{2cm}
% $\leftrightsquigarrow$
&
\begin{tikzpicture}[line cap=round,line join=round,>=triangle
45,x=0.7cm,y=0.7cm, scale=0.8,every node/.style={scale=0.8}]

\draw (-2,-0.3)-- (3,-0.3);

%%%%%%%%%%%%%%%%%%%%%%%%%%%
\draw (-2,-0.3) node[anchor=north west] {$0$};

\draw (-1.7,1.3)-- (-1.7,-0.3);

\begin{scriptsize}

\draw (-1.7,0) circle (2.5pt);

\draw (-1.7,1)[fill=black] circle (2.5pt);

\end{scriptsize}

%%%%%%%%%%%%%%%%%%%%%%%
\draw (-0.9,-0.3) node[anchor=north west] {$1$};

\draw (-0.6,1.3)-- (-0.6,-0.3);

\begin{scriptsize}
	
\draw  (-0.6,0) circle (2.5pt);
\draw (-0.6,1)[fill=black] circle (2.5pt);

\end{scriptsize}

%%%%%%%%%%%%%%%%%%%%%%%
\draw (0.2,-0.3) node[anchor=north west] {$2$};

\draw (0.5,1.3)-- (0.5,-0.3);

\begin{scriptsize}
	
\draw  (0.5,0) circle (2.5pt);
\draw (0.5,1) circle (2.5pt);
\end{scriptsize}

%%%%%%%%%%%%%%%%%%%%%%%
\draw (1.3,-0.3) node[anchor=north west] {$3$};

\draw (1.6,1.3)-- (1.6,-0.3);

\begin{scriptsize}
	
\draw  (1.6,0) circle (2.5pt);
\draw (1.6,1)[fill=black] circle (2.5pt);
\end{scriptsize}

%%%%%%%%%%%%%%%%%%%%%%%
\draw (2.4,-0.3) node[anchor=north west] {$4$};

\draw (2.7,1.3)-- (2.7,-0.3);

\begin{scriptsize}
	
\draw  (2.7,0)[fill=black] circle (2.5pt);
\draw (2.7,1) circle (2.5pt);
\end{scriptsize}
\end{tikzpicture}\\
\hspace{-0.9cm}twisted $5$-bar-abacus&&
\quad$5$-bar-abacus
\end{tabular}
\end{center}
\medskip
On the left side of the picture, a twisted $5$-bar-abacus is shown. On the
right, we display its corresponding $5$-bar abacus. The resulting
bar-partition is $\lambda=(4,5,6,8)$.
\end{example}
\medskip

\begin{notation}
\label{not:abaque}
    Let $m\in\Z$. We associate to $m$ an abacus, denoted by $\mathcal
R(m)$ as follows. We begin with a pointed abacus in which all beads above
the fence $\mathfrak f$ are white, and all beads below $\mathfrak f$ are
black.
If $m>0$, we place a black bead in each of the first $m$ slots above
$\mathfrak f$ (thas is, the slots labeled from $0$ to $m-1$). 
If $m<0$, we place a white bead in each of the first $|m|$ slots  below
$\mathfrak f$ (that is, the slots labeled 
from $0$ to $|m|-1$) below $\mathfrak f$.
In general, if $m\neq 0$, then $\mathcal R(m)$ is not a pointed
abacus.
\end{notation}

    Let $\overline{c} = (c_1, \ldots, c_{(t-1)/2}) \in
\mathbb{Z}^{(t-1)/2}$.  We consider a runner $\mathcal{C}_0'$ whose slots,
labeled by $\mathbb{N}$, are filled exclusively with white beads. For each
$1 \leq r \leq \frac{1}{2}(t-1)$, following Notation~\ref{not:abaque}, we
set $\mathcal{C}_r' = \mathcal{R}(c_r)$.  This yields a twisted
$t$-bar-abacus whose associated bar-partition $\lambda_{\overline{c}}$ is
called a \emph{$t$-bar core partition} with \emph{$t$-bar characteristic
vector} $\overline{c}$.  
\smallskip

    Let $\lambda$ be a bar-partition with twisted $t$-bar-abacus
$\mathcal{R}_0',\,\ldots,\,\mathcal{R}_{(t-1)/2}'$. Since $\mathcal{R}_0'$
is a $1$-bar-abacus, we denote its corresponding bar-partition by
$\lambda^0$. 
    Now, for $1 \leq r \leq \frac 1 2 (t-1)$, although $\mathcal{R}_r'$ is
equipped with a fence, it is in general not a pointed $1$-abacus. However,
there exists a unique way to transform it into one by applying a push
down or a pull up to the runner. As in~\S\ref{subsec:frob}, if we push
down, we let $c_r$ be the number of downward moves. If
we pull up, we set $c_r$ to be the negative of the number of upward
moves. After this transformation, the runner $\mathcal{R}_r'$ becomes a
pointed $1$-abacus, denoted by $\mathcal{S}_r'$, whose corresponding
partition we denote by~$\lambda^r$.

Then we define \emph{the $t$-bar-quotient of $\lambda$} by
$$\lambda^{(\overline
t)}=(\lambda^0,\lambda^1,\ldots,\lambda^{(t-1)/2}),$$
and we write
$$\overline{c}_{t}(\lambda)=(c_1,\ldots, c_{
(t-1)/2})$$
for \emph{the $t$-bar characteristic vector} associated with $\lambda$.
The $t$-bar core partition associated to $\overline c_t(\lambda)$ is
denoted by $\lambda_{(\overline t)}$ and is called the \emph{$t$-bar core
of $\lambda$}.

We also define the \emph{bar $t$-weight} of $\lambda$ by
\begin{equation}
\label{eq:pweightcar}
w_{\overline t}(\lambda)=\sum_{j=0}^{(t-1)/2}|\lambda^j|
\end{equation}
\emph{the bar $t$-weight} of $\lambda$.
The correspondence $\lambda\mapsto (\lambda_{(\overline
t)},\,\lambda^{\,(\overline t\,)})$ is referred to as the \emph{
$t$-bar Littlewood map}.

    We can then define \emph{the $t$-bar cocore partition}
$\lambda^{[\,\overline t\,]}$ associated with $\lambda$, as the
bar-partition of size $tw$ with empty $t$-bar core, and the same $t$-bar
quotient as $\lambda$. 
Then, the tuple 
$$(\lambda_{(\overline p)},\lambda^{[\,\overline p\,]})$$
is called the \emph{$p$-bar Littlewood decomposition} of $\lambda$.

    In the following, a bar-partition $\lambda$ is said to be a
\emph{$t$-bar cocore} if its $t$-bar core is empty.
\begin{remark}
\label{rk:littelabaque}
    By construction, $\mathcal S'=(\mathcal S_0',\ldots,\mathcal
S_{(t-1)/2}')$ is the twisted $t$-bar-abacus of $\lambda^{[\overline t]}$.
We have $\mathcal S_0'=\mathcal R_0'$, and for $1\leq r\leq (t-1)/2$, a
runner $\mathcal R_r'$ is obtained from $\mathcal S_r'$ by applying the
push, or pull, of distance $|c_r|$, where $(c_1,\ldots,c_{(t-1)/2})$ is
the $t$-bar characteristic vector of $\lambda$.
\end{remark}

\begin{example}
Consider the bar-partition $\lambda$ with $3$-bar core $(1)$ and $3$-bar
quotient $((1),(2))$.
We can reconstruct $\lambda$ as follows. First, we display the twisted
$3$-bar abacus of $\lambda_{(\overline 3)}$.
\medskip
\begin{center}
\begin{tikzpicture}[line cap=round,line join=round,>=triangle
45,x=0.7cm,y=0.7cm, scale=0.8,every node/.style={scale=0.8}]

\draw (-2,-0.3)-- (-1.4,-0.3);
\draw [dash pattern=on 2pt off 2pt](-1,1.5)-- (-0.1,1.5);

%%%%%%%%%%%%%%%%%%%%%%%%%%%
\draw (-2,-0.3) node[anchor=north west] {$0$};

\draw (-1.7,1.3)-- (-1.7,-0.3);

\begin{scriptsize}
\draw (-1.7,0) circle (2.5pt);

\draw (-1.7,1) circle (2.5pt);

\end{scriptsize}

%%%%%%%%%%%%%%%%%%%%%%%
\draw (-0.9,-0.3) node[anchor=north west] {$1$};

\draw (-0.6,3.3)-- (-0.6,-0.3);

\begin{scriptsize}

\draw  (-0.6,3)circle (2.5pt);
\draw (-0.6,2)  circle (2.5pt);
	
\draw  (-0.6,0)[fill=black]circle (2.5pt);
\draw (-0.6,1) circle (2.5pt);

\end{scriptsize}
\end{tikzpicture}
\end{center}
We observe that its $3$-bar characteristic vector is $(-1)$. Then, using
the $3$-bar quotient of $\lambda$, we construct the twisted $3$-bar-abacus
$S$ associated to $\lambda^{[\overline 3]}$.
\medskip
\begin{center}
\begin{tikzpicture}[line cap=round,line join=round,>=triangle
45,x=0.7cm,y=0.7cm, scale=0.8,every node/.style={scale=0.8}]

\draw (-2,-0.3)-- (-1.4,-0.3);
\draw [dash pattern=on 2pt off 2pt](-1,1.5)-- (-0.1,1.5);

%%%%%%%%%%%%%%%%%%%%%%%%%%%
\draw (-2,-0.3) node[anchor=north west] {$0$};

\draw (-1.7,1.3)-- (-1.7,-0.3);

\begin{scriptsize}
\draw (-1.7,0) circle (2.5pt);

\draw (-1.7,1)[fill=black] circle (2.5pt);

\end{scriptsize}

%%%%%%%%%%%%%%%%%%%%%%%
\draw (-0.9,-0.3) node[anchor=north west] {$1$};

\draw (-0.6,3.3)-- (-0.6,-0.3);

\begin{scriptsize}

\draw  (-0.6,3)[fill=black]circle (2.5pt);
\draw (-0.6,2)  circle (2.5pt);
	
\draw  (-0.6,0)[fill=black]circle (2.5pt);
\draw (-0.6,1) circle (2.5pt);

\end{scriptsize}

\end{tikzpicture}
\end{center}
    We deduce that $\mathcal R'_0=\mathcal S'_0$ and the runner $\mathcal
R'_1$ is obtained from $\mathcal S_1'$ by a downward push of $1$. We
recognize the twisted $3$-bar-abacus of Example~\ref{ex:bar}, and conclude
that $\lambda=(1,2,4,5)$.
\end{example}

\subsection{Length, sign and $t$-bar-Littlewood decomposition}

    For a bar-partition $\lambda$, we denote by $\ell(\lambda)$ its number
of parts. In this section, we illustrate the tools developed above by
establishing a connection between the length of a bar-partition and those
of its $t$-bar core and $t$-bar cocore. This relationship will be useful
in the sequel. We also emphasize that the technique used in the proof,
particularly the \emph{push-and-pull process}, will play a crucial role
in what follows.

\begin{lemma}
\label{lemma:longueurlittelwoodec} 
Let $t$ be a positive odd integer. For any bar-partition $\lambda$, we
have 
$$\ell(\lambda)-\ell(\lambda_{(\overline
t)})-\ell(\lambda^{[\overline t]}) \equiv 0\mod 2.$$
\end{lemma}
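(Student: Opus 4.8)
The plan is to track the number of parts (= number of black beads) through the construction of the twisted $t$-bar-abacus and the push-and-pull process, working modulo $2$ throughout. First I would fix a bar-partition $\lambda$ with $t$-bar-abacus $\mathcal R_0,\dots,\mathcal R_{t-1}$, so that $\ell(\lambda) = \sum_{r=0}^{t-1} \#\{\text{black beads on } \mathcal R_r\}$. Passing to the twisted abacus $\mathcal R_0',\dots,\mathcal R_{(t-1)/2}'$, runner $\mathcal R_0'$ contributes $\ell(\lambda^0)$ black beads, while for $1 \le r \le (t-1)/2$ the runner $\mathcal R_r'$ is assembled from $\mathcal R_r$ (above the fence) and $\mathcal R_{t-r}^*$ (below the fence, colours reversed). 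I would express the number of black beads on $\mathcal R_r \cup \mathcal R_{t-r}$ in terms of the black beads above the fence and the white beads below the fence on $\mathcal R_r'$; since reversing colours on a runner with, say, $N$ visible beads sends $k$ black beads to $N-k$ black beads, the relevant parity bookkeeping is clean once one is careful about truncation (only finitely many beads are black, so all but finitely many slots below the fence of $\mathcal R_r'$ are black).

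Next I would analyse the effect of the push-and-pull process that converts each $\mathcal R_r'$ into the pointed $1$-abacus $\mathcal S_r'$ (the twisted abacus of $\lambda^{[\overline t]}$), recording the change in the number of black beads above the fence. By Remark~\ref{rk:littelabaque}, $\mathcal R_r'$ is obtained from $\mathcal S_r'$ by a push or pull of distance $|c_r|$, where $\overline c = (c_1,\dots,c_{(t-1)/2})$ is the $t$-bar characteristic vector of $\lambda$; and the core $\lambda_{(\overline t)}$ is by definition the bar-partition with twisted abacus $\mathcal R(c_1),\dots,\mathcal R(c_{(t-1)/2})$ above $\mathcal R_0'$ trivial. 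The key computation is: a push or pull of distance $d$ on a $1$-runner changes the count of black beads above the fence by an amount whose parity depends only on $d$ and on the colours of a bounded window of slots; comparing this for $\mathcal R_r' \leftrightarrow \mathcal S_r'$ against the ``empty'' runner $\leftrightarrow \mathcal R(c_r)$ should show that the parity discrepancy between $\ell(\lambda)$ and $\ell(\lambda_{(\overline t)}) + \ell(\lambda^{[\overline t]})$, summed over $r$, telescopes to $0$.

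Concretely, I would establish, for each $r$ with $1 \le r \le (t-1)/2$, a local identity of the shape
\[
\ell_r(\lambda) \;\equiv\; \ell_r(\lambda_{(\overline t)}) + \ell_r(\lambda^{[\overline t]}) \pmod 2,
\]
where $\ell_r(-)$ denotes the contribution of the pair $(\mathcal R_r, \mathcal R_{t-r})$ (resp. the runner $\mathcal R_r'$) to the number of parts, and then sum over $r$, noting the $r=0$ term contributes $\ell(\lambda^0)$ to both sides. The local identity is where the colour-reversal and the push/pull parities interact, and I expect this to be the main obstacle: one must show that the number of upward or downward moves $|c_r|$ enters with a sign/parity that is exactly compensated by the colour reversal used to build the bottom half of $\mathcal R_r'$. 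I anticipate that the cleanest route is to compute everything relative to a fixed reference configuration (all beads above the fence white, all below black, as in Notation~\ref{not:abaque}), so that both $\lambda$ and $\lambda^{[\overline t]}$ are measured against the same baseline and the characteristic vector $c_r$ appears symmetrically; the parity of the net displacement then cancels in the difference.

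Finally I would assemble the local congruences: summing over $r = 0,1,\dots,(t-1)/2$ gives $\ell(\lambda) \equiv \ell(\lambda_{(\overline t)}) + \ell(\lambda^{[\overline t]}) \pmod 2$, which is the assertion. Throughout, the only subtlety to watch is that the twisted abacus has infinitely many black beads below each fence, so ``number of black beads'' must always be interpreted as a suitably normalized difference (e.g. relative to the reference abacus), and one should check that this normalization is consistent across $\lambda$, its core, and its cocore — this consistency is precisely what makes the telescoping work.
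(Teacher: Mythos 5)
Your plan is essentially the paper's own argument: compare the abaci runner\hbox{-}pair by runner-pair, read the cocore off the pointed runners $\mathcal S'_r$, encode the core by $\mathcal R(c_r)$, and recover $\mathcal R'_r$ from $\mathcal S'_r$ by a push or pull of distance $|c_r|$ (Remark~\ref{rk:littelabaque}), the $0$-th runner contributing identically to $\lambda$ and $\lambda^{[\overline t]}$ and nothing to the core. The only thing separating your plan from a proof is the deferred local identity, and it needs no normalization subtleties: for $c_r>0$, let $D_r$ (resp.\ $A_r$) be the black (resp.\ white) beads of $\mathcal S_{t-r}$ in the window $\{0,\dots,c_r-1\}$ swept by the pull. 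The beads of $A_r$ land above the fence as black beads, giving parts of $\lambda$ of residue $r$ that coincide with $|A_r|$ of the $c_r$ core parts; the beads of $D_r$ land above the fence as white beads, so $\lambda$ loses the corresponding $|D_r|$ parts of residue $t-r$ of $\lambda^{[\overline t]}$ \emph{and} the $|D_r|$ remaining core parts; all other beads merely shift and are common to $\lambda$ and $\lambda^{[\overline t]}$. Hence $\alpha_r=\beta_r+\gamma_r-2|D_r|$ for the residue-$\{r,t-r\}$ counts (the case $c_r<0$ is symmetric), and summing gives the exact relation $\ell(\lambda)=\ell(\lambda_{(\overline t)})+\ell(\lambda^{[\overline t]})-2d$ with $d=\sum_r|D_r|$ — stronger than the parity claim, and this exact form (not just the congruence) is what gets reused later, e.g.\ in Corollary~\ref{cor:signlitteldec} and Theorem~\ref{thm:little}. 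Note also that the parity of $|c_r|$ itself is irrelevant; what controls the discrepancy is the number $|D_r|$ of black cocore beads in the swept window, so your hope that ``the net displacement cancels'' should be replaced by exactly this bookkeeping.
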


\begin{proof}
    We write $\mathcal R=(\mathcal R_0,\ldots,\mathcal R_{t-1})$,
$\mathcal S=(\mathcal S_0,\ldots,\mathcal S_{t-1})$ and $\mathcal
C=(\mathcal C_0,\ldots,\mathcal C_{t-1})$ for the $t$-bar abacus of
$\lambda$, $\lambda^{[\overline t]}$ and $\lambda_{(\overline t)}$,
respectively. Additionally, we write $\mathcal R'=(\mathcal
R_0',\ldots,\mathcal R_{(t-1)/2}')$, $\mathcal S'=(\mathcal
S_0',\ldots,\mathcal S_{(t-1)/2}')$ and $\mathcal C'=(\mathcal
C_0',\ldots,\mathcal C_{(t-1)/2}')$ for their twisted $t$-bar-abacus.
To prove the result, we will compare the black beads of these $t$-bar
abacus runner by runner. 
We  denote by $\overline{c}=(c_1,\ldots,c_{(t-1)/2})$ the $t$-bar
characteristic vector of $\lambda_{(\overline t)}$. 

    Let $1\leq r\leq (t-1)/2$. We set $\alpha_r$, $\beta_r$ and $\gamma_r$
the numbers of parts with residues $r$ and $t-r$ of $\lambda$,
$\lambda^{[\overline t]}$ and $\lambda_{(\overline t)}$, respectively.

Assume that $c_r>0$. We define
$$D_r=\{x\in\mathcal S_{t-r}\mid 0\leq x\leq c_r-1,\ b_x\ \text{is
black}\},$$
$$
A_r=\{x\in\mathcal S_{t-r}\mid 0\leq x\leq c_r-1,\ b_x\ \text{is
white}\},$$
$$L_r=\{x\in\mathcal S_{t-r}\mid  x\geq c_r,\ b_x\ \text{is
black}\}\text{ and }
M_r=\{x\in\mathcal S_{r}\mid b_x\ \text{is black}\},$$
where $b_x$ is defined in Notation~\ref{not:beadcolor}.
    By construction, the parts of $\lambda^{[\overline t]}$ with residues
$r$ and $t-r$ are labeled by $D_r\cup L_r\cup M_r$. Furthermore, as
explained in Remark~\ref{rk:littelabaque}, $\mathcal R'_r$ is obtained
from $\mathcal S_r'$ by pulling up by $c_r$. In particular, the parts of
$\lambda$ with residues $r$ and $t-r$ are labeled by $A'_r\cup L'_r\cup
M'_r$, where
$$A'_r=\{c_r-1-x \mid x\in A_r\},\ L'_r=\{x-c_r\mid x\in
L_r\}\text{ and } M'_r=\{x+c_r\mid x\in M_r\}.$$

    We define $D_r'=\{c_r-1-x\mid x\in D_r'\}$. Since $c_r>0$,
$\lambda_{(\overline t)}$ has no parts with residues $t-r$, and its parts
with residues $r$ are labeled by $\{0,\ldots,c_r-1\}=A'_r\cup D'_r$.
It follows that
\begin{align}
\label{eq:relationparts}
\alpha_r&=|A'_r\cup L'_r\cup
M'_r|=|A'_r|+|L'_r|+|M'_r|=\gamma_r-|D'_r|+|L_r|+|M_r|\\
&=\gamma_r+\beta_r-|D'_r|-|D_r|\nonumber\\
&=\gamma_r+\beta_r-2|D_r|.\nonumber
\end{align}

    Assume that $c_r<0$. By a similar argument, we can prove that
(\ref{eq:relationparts}) holds, 
where now $D_r=\{x\in \mathcal S_r\mid 0\leq x\leq |c_r|-1,\ b_x\text{ is
black}\}$. Additionally, we observe that $\lambda_{(\overline t)}$ has no
parts with residue $0$, and that $\mathcal R_0=\mathcal S_0$, which means
that $\lambda$ and $\lambda^{[\overline t]}$ share the same parts with
residue $0$. Similarly, if $c_r=0$ for some $1\leq r\leq (t-1)/2$, then
$\lambda_{(\overline t)}$ has no parts with residue $r$ or $t-r$, and
$\mathcal R'_r=\mathcal S'_r$, implying that $\lambda$ and
$\lambda^{[\overline t]}$ have the same parts with residue $r$ and $t-r$. 
Finally, by summing the equalities~(\ref{eq:relationparts}) %for $1\leq r\leq (t-1)/2$, 
we obtain that
\begin{equation}
\label{eq:longeurrelation}
\ell(\lambda)=\ell(\lambda_{(\overline t)})+\ell(\lambda^{[\overline
t]})-2d,
\end{equation}
where $d=\sum_{r=1}^{(t-1)/2}|D_r|$. Hence, the result follows.
\end{proof}

\begin{corollary}
\label{cor:signlitteldec}
Let $\lambda$ be a bar-partition and $t$ be a positive odd integer. Then
$$\operatorname{sgn}(\lambda)=\operatorname{sgn}(\lambda_{(\overline t)})
\operatorname{sgn}(\lambda^{[\overline t]}).$$
\end{corollary}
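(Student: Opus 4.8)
The plan is to combine Lemma~\ref{lemma:longueurlittelwoodec} with the additivity of sizes under the $t$-bar Littlewood decomposition, using the defining formula $\operatorname{sgn}(\mu)=(-1)^{|\mu|-\ell(\mu)}$ from~(\ref{eq:sign}). The only genuine input is the length congruence already proved; everything else is bookkeeping on parities.

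First I would record the size relation. By definition the $t$-bar cocore $\lambda^{[\overline t]}$ is the bar-partition of size $tw$ with empty $t$-bar core and the same $t$-bar quotient as $\lambda$, where $w=w_{\overline t}(\lambda)=\sum_{j=0}^{(t-1)/2}|\lambda^j|$; and the $t$-bar core $\lambda_{(\overline t)}$ has empty $t$-bar quotient, so its size is $|\lambda|-tw$. Hence
$$|\lambda|=|\lambda_{(\overline t)}|+tw=|\lambda_{(\overline t)}|+|\lambda^{[\overline t]}|.$$

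Next I would combine this with Lemma~\ref{lemma:longueurlittelwoodec}, which gives $\ell(\lambda)\equiv \ell(\lambda_{(\overline t)})+\ell(\lambda^{[\overline t]}) \pmod 2$. Subtracting, one obtains
$$|\lambda|-\ell(\lambda)\equiv \bigl(|\lambda_{(\overline t)}|-\ell(\lambda_{(\overline t)})\bigr)+\bigl(|\lambda^{[\overline t]}|-\ell(\lambda^{[\overline t]})\bigr)\pmod 2,$$
and raising $-1$ to these exponents yields exactly $\operatorname{sgn}(\lambda)=\operatorname{sgn}(\lambda_{(\overline t)})\operatorname{sgn}(\lambda^{[\overline t]})$ by~(\ref{eq:sign}).

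There is essentially no obstacle here: the corollary is a formal consequence of the lemma together with the (immediate) additivity of sizes. The only point requiring a sentence of care is recalling why $|\lambda^{[\overline t]}|=tw$ and $|\lambda_{(\overline t)}|=|\lambda|-tw$, which follow directly from the definitions of the $t$-bar weight, the $t$-bar core, and the $t$-bar cocore given in~\S\ref{subsec:barabaque}.
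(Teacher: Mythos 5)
Your proposal is correct and follows essentially the same route as the paper: size additivity plus the parity of lengths from Lemma~\ref{lemma:longueurlittelwoodec}, then exponentiating $(-1)$ via~(\ref{eq:sign}). The only difference is that the size relation $|\lambda|=|\lambda_{(\overline t)}|+|\lambda^{[\overline t]}|$ is not quite ``immediate from the definitions'' as you claim (only $|\lambda^{[\overline t]}|=tw$ is definitional); the paper instead cites \cite[Corollary 4.4]{olsson} for it, and you should do the same or supply the short abacus computation.
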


\begin{proof}
By~\cite[Corollary 4.4]{olsson}, we have 
\begin{equation}
\label{eq:sizerelation}
|\lambda|=|\lambda_{(\overline t)}|+|\lambda^{[\overline t]}|.
\end{equation}
Now, using Lemma~\ref{lemma:longueurlittelwoodec}, we obtain
\begin{align*}
\operatorname{sgn}(\lambda)&=(-1)^{|\lambda|-\ell(\lambda)}=
(-1)^{|\lambda_{(t)}|+|\lambda^{[\overline
t]}|-\ell(\lambda_{(\overline t)}) -\ell(\lambda^{[\overline t]})+2d}\\
&=(-1)^{|\lambda_{(t)}|-\ell(\lambda_{(\overline t)})}(-1)^{|\lambda^{[\overline
t]}|-\ell(\lambda^{[\overline t]})}\\
&=\operatorname{sgn}(\lambda_{(\overline
t)})\operatorname{sgn}(\lambda^{[\overline t]}),
\end{align*}
as required.
\end{proof}

\section{Action of Galois-Navarro automorphisms}
\label{sec:actiongalois}

\subsection{Results on the Jacobi symbol}
\label{subsec:actionspin}

    Let $f$ be any Galois automorphism of $\overline \Q$ over $\Q$. When
$\alpha$ is an algebraic number of degree $2$, we denote by
$\tau(\alpha,f)\in \{-1,1\}$ the value such that
$f(\alpha)=\tau(\alpha,f)\alpha$.

    Let $m$ be an integer and $\omega$ be a primitive $m$-root of unity.
In particular, there exists a positive integer $r$ such that
$f(\omega)=\omega^r$. Proposition 4.1 of~\cite{BrNa} asserts that, if $m$
is odd, then
\begin{equation}
\label{eq:actgalroot}
\tau(\sqrt m,f)=\tau(i,f)^{(m-1)/2}\legendre{r}{m},
\end{equation}
where $\legendre{\cdot}{\cdot}$ denotes the Jacobi symbol. \medskip

    Now, let us focus on the case of the Galois automorphism $\sigma_p$.
To simplify the notation, for all positive integer $m$ we will write 
\begin{equation}
\label{eq:epsigmap}
\tau(m)=\tau(\sqrt m,\sigma_p).
\end{equation}

\begin{lemma}
If $m$ is a positive integer, then
$$\tau(m)=\legendre{m}{p}.$$
\end{lemma}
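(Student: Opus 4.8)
The plan is to derive the lemma from formula~(\ref{eq:actgalroot}) applied to the specific automorphism $\sigma_p$, handling separately the contributions of the odd part of $m$ and of its $2$-adic part. Recall that $\sigma_p$ is the Galois-Navarro automorphism fixing $p$-power roots of unity and sending a primitive $m'$-th root of unity $\omega$ (for $m'$ prime to $p$) to $\omega^p$; in particular $\tau(\cdot)$ is multiplicative in its argument, since $\sqrt{m_1 m_2} = \pm\sqrt{m_1}\sqrt{m_2}$ up to a sign that cancels when we compare images under $\sigma_p$, so it suffices to treat prime powers. Writing $m = 2^a m_0$ with $m_0$ odd, we get $\tau(m) = \tau(2)^a \tau(m_0)$, and likewise the Jacobi symbol factors as $\legendre{m}{p} = \legendre{2}{p}^a \legendre{m_0}{p}$, so the two sides match as soon as we verify the two cases $m = 2$ and $m$ odd.

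For the odd case, I would apply~(\ref{eq:actgalroot}) with $f = \sigma_p$ and $\omega$ a primitive $m$-th root of unity: since $\gcd(m,p) = 1$, the exponent $r$ with $\sigma_p(\omega) = \omega^r$ may be taken to be $p$ (reducing mod $m$), giving $\tau(m) = \tau(i,\sigma_p)^{(m-1)/2}\legendre{p}{m}$. Now $i$ is a primitive $4$-th root of unity; if $p \equiv 1 \pmod 4$ then $\sigma_p$ fixes $i$ and $\tau(i,\sigma_p) = 1$, while if $p \equiv 3 \pmod 4$ then $\sigma_p(i) = i^p = i^3 = -i$, so $\tau(i,\sigma_p) = -1$; in both cases $\tau(i,\sigma_p) = (-1)^{(p-1)/2} = \legendre{-1}{p}$. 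Hence $\tau(m) = \legendre{-1}{p}^{(m-1)/2}\legendre{p}{m}$, and by quadratic reciprocity $\legendre{p}{m}\legendre{m}{p} = (-1)^{\frac{p-1}{2}\cdot\frac{m-1}{2}} = \legendre{-1}{p}^{(m-1)/2}$, so $\legendre{p}{m} = \legendre{-1}{p}^{(m-1)/2}\legendre{m}{p}$; substituting this yields $\tau(m) = \legendre{-1}{p}^{(m-1)/2}\legendre{-1}{p}^{(m-1)/2}\legendre{m}{p} = \legendre{m}{p}$, as wanted.

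For $m = 2$, the element $\sqrt 2$ lies in $\Q(\zeta_8)$, and $\sigma_p$ acts on $\zeta_8$ through $\zeta_8 \mapsto \zeta_8^{p'}$ where $p'$ is the reduction of $p$ mod $8$; a direct computation with $\sqrt 2 = \zeta_8 + \zeta_8^{-1}$ shows $\sigma_p(\sqrt 2) = \sqrt 2$ when $p \equiv \pm 1 \pmod 8$ and $\sigma_p(\sqrt 2) = -\sqrt 2$ when $p \equiv \pm 3 \pmod 8$, i.e. $\tau(2) = (-1)^{(p^2-1)/8} = \legendre{2}{p}$ by the supplementary law for the Jacobi symbol. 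Combining the three cases via multiplicativity gives $\tau(m) = \legendre{m}{p}$ for all positive integers $m$. The only delicate point is bookkeeping the sign factors of $\sqrt{\cdot}$ consistently when proving multiplicativity and when invoking~(\ref{eq:actgalroot}) — one must be careful that $\tau(\alpha,f)$ depends only on $\alpha$ up to sign, which is exactly what makes the argument go through cleanly — but this is routine, and no genuine obstacle arises.
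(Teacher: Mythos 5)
Your proposal is correct and follows essentially the same route as the paper: factor $m=2^k m'$ with $m'$ odd, compute $\tau(2)$ via $\sqrt2=\zeta_8+\zeta_8^{-1}$ and $\tau(i)=(-1)^{(p-1)/2}$, handle the odd part with formula~(\ref{eq:actgalroot}) plus quadratic reciprocity, and conclude by multiplicativity of $\tau$. The only shared caveat (present in the paper's argument too) is that the computation implicitly assumes $p\nmid m$, since both the $p'$-root description of $\sigma_p$ and Jacobi reciprocity require this.
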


\begin{proof}
    Write $m=2^km'$ with $m'$ odd. First, we observe that
$\sqrt{2}=e^{i\frac{\pi}{4}}+e^{-i\frac{\pi}4}$ is a sum of two $p'$-roots
of unity. Hence, $\sigma_p(\sqrt
2)=e^{i\frac{\pi}{4}p}+e^{-i\frac{\pi}4p}$. Next, by studying the residues
of $p$ modulo $8$, and using~\cite[Proposition 5.1.3]{KennethRosenNrTh},
we deduce that
\begin{equation}
\label{eq:sqrt2}
\tau(2)=(-1)^{\frac{p^2-1}8}=\legendre{2}{p}.
\end{equation}
    On the other hand, $i$ is a $p'$-root of unity, and we have
$\sigma_p(i)=i^p$ which equals $i$ if $(p-1)/2$ is even and $-i$
otherwise. In particular, we obtain
\begin{equation}
\label{eq:ibouge}
\tau(i)=(-1)^{(p-1)/2}=\legendre{-1}{p}.
\end{equation}
    Now, by using~(\ref{eq:actgalroot}) and the quadratic reciprocity
formula, we find
\begin{equation}
\label{eq:m'}
\tau(m')=(-1)^{(p-1)(m'-1)/4}\legendre{p}{m'}=\legendre{m'}{p}.
\end{equation}
    Finally, using (\ref{eq:sqrt2}), (\ref{eq:m'}) and the fact that
$\tau$ is multiplicative, we deduce that
$$\tau(m)=\tau(2)^k\tau(m')=\legendre{2}{p}^k\legendre{m'}{p}
=\legendre{2^km'}{p}=\legendre{m}{p},$$
as required.
\end{proof}

\subsection{Character values}
\label{subsec:spinvalues}
\smallskip

    Let $n$ be a positive integer. Let $\lambda\in\mathcal P_n$. We choose
an element $s_{\lambda}\in\Sym_n$ with cycle type $\lambda$. We fix
$t_{\lambda}\in\tSym_n$ such that $\pi(t_{\lambda})=s_{\lambda}$, where
$\pi:\tSym_n\longrightarrow \Sym_n$ is natural projection given in
(\ref{eq:projSn}). Such an element is said to have \emph{cycle type}
$\lambda$. Note that the two elements $t_{\lambda}$ and $zt_{\lambda}$
have the same cycle type. Furthermore, if $t_{\lambda}$ and $zt_{\lambda}$
are conjugate (either in $\tSym_n$ or in $\tAlt_n$), then the conjugacy
class of $t_{\lambda}$ in $\tSym_n$ or in $\tAlt_n$ is called a
\emph{non-split class}. Otherwise, it is referred to as a \emph{split
class}. 

    Let $\mathcal{O}_n$ denote the set of $\lambda\in\mathcal{P}_n$ with
parts of odd length. Then Schur proved (see~\cite{schur}, \S7 and p.\,176)
that the $\tSym_n$-split classes are labeled by $\mathcal{O}_n\cup
\mathcal{D}^-_n$, and the $\tAlt_n$-split classes by $\mathcal \mathcal
O_n \cup \mathcal D_n^+$. 

    Now, we recall some facts about the values of spin characters and
difference characters of $\tSym_n$ and $\tAlt_n$. We will use the notation
introduced in equations (\ref{eq:xidef}) and (\ref{eq:chardiff}).

    First, any spin characters vanish on non-split classes. Then Schur
proved in \cite{schur} the following result. For any
$\lambda=(\lambda_1,\ldots,\lambda_k)\in \mathcal D_n$, the spin character
$\xi_{\lambda}$ in $\Irr(\tSym_n)$ or $\Irr(\tAlt_n)$ takes only integer
values.
    The difference character corresponding to $\lambda$ vanishes on all
conjugacy classes, expect those whose representatives have cycle type
$\lambda$. More precisely, in the case of $\tSym_n$, there are two such
conjugacy classes, and the non-zero values of the difference character are
equal to
\begin{equation}
\label{eq:diffSntilte}
a_{\lambda}=
\sqrt 2\,i^{\frac{n-k+1}2}\sqrt{\lambda_1\cdots\lambda_k},
\end{equation}
up to a sign.
    Similarly,  in the case of $\tAlt_n$, there are two or four classes,
and again, the non-zero values of the difference character are, up to
a sign, equal to
\begin{equation}
\label{eq:diffAntilte}
b_{\lambda}=i^{\frac{n-k}2}\sqrt{\lambda_1\cdots\lambda_k},
\end{equation}

\subsection{Action of Galois automorphisms on spin characters}

    Let $p$ be a odd prime number. Recall that the group of Navarro
automorphisms $\mathcal H$ is generated by the Galois automorphisms $f$
that act trivially on the set of $p'$-roots of unity, together with the
Galois automorphism $\sigma_p$, that acts by $x\mapsto x^p$ on $p'$-roots
and fixes any complex roots of unity with order a power of $p$. 

    Let $\lambda\in\mathcal D_n$, and $f\in\mathcal H$. If
$\lambda\in\mathcal D_n^+$, then $\xi_{\lambda}\in\Irr(\tSym_n)$ is fixed
by $f$, since $\xi_{\lambda}$ takes only integer values. It follows that
$f$ permutes the pair $\{\xi_\lambda^+,\,\xi_{\lambda}^-\}\subseteq
\Irr(\tAlt_n)$. 
    Similarly, if $\lambda\in\mathcal D_n^-$, then
$\xi_{\lambda}\in\Irr(\tAlt_n)$ is also integer-valued and hence fixed by
$f$. Therefore, \cite[Lemma 2.2]{BrNa} implies that $f$ acts on the pair
$\{\xi_\lambda^+,\,\xi_{\lambda}^-\}\subseteq
\Irr(\tSym_n)$. %Using (\ref{eq:diffSntilte}), we get

In both cases, there exists a sign $\tau(\lambda,f)$ such that
\begin{equation}
\label{eq:Sntildemove}
f(\xi_\lambda^{\pm})=\xi_\lambda^{\pm\tau(\lambda,f)}.
\end{equation}
In term of
difference character, this gives
\begin{equation}
\label{eq:Antildemove}
f(\Delta_\lambda)=\tau(\lambda,f)\Delta_{\lambda}.
\end{equation}

    Now, observe that the complex numbers defined in equations
(\ref{eq:diffAntilte}) and (\ref{eq:diffSntilte}) have degree $2$.
Therefore, using the notation introduced in \S\ref{subsec:actionspin}, we
obtain
$$\tau(\lambda,f)=\tau(a_{\lambda},f),$$ when $\lambda\in \mathcal D_n^-$,
and
$$\tau(\lambda,f)=\tau(b_{\lambda},f),$$
when $\lambda\in\mathcal D_n^+$.

\subsection{Bar-abacus pairing}
\label{subsection:pairing}

    In this section, we introduce a key concept, which we call
\emph{bar-abacus pairing}, that will play a fundamental role in our
approach. Throughout, $p$ denotes an odd prime number.

    Let $\lambda$ be a $p$-cocore bar-partition with $p$-bar quotient
$$\lambda^{(\overline
p)}=(\lambda^0,\lambda^1,\ldots,\lambda^{(p-1)/2}).$$
    We denote by $\mathcal S=(\mathcal S_0,\ldots,\mathcal S_{p-1})$ and
$\mathcal S'=(\mathcal S_0',\ldots,\mathcal S_{(p-1)/2}')$ the $p$-bar
abacus and the twisted $p$-bar-abacus of $\lambda$, respectively.
Fix an integer $1\leq r\leq (p-1)/2$, and let $x\in \N$ be such that $b_x$
is black on $\mathcal S_r$ (see Notation~\ref{not:beadcolor}). In
particular, $x$ labels a part of $\lambda$ of length $d_x$ (see
Notation~\ref{not:longueurpart}) with residue $r$ modulo $p$. 
    Moreover, since $\lambda$ is a $p$-cocore bar-partition, the component
$\mathcal S_r'$ forms the pointed abacus of the partition $\lambda^r$,
with $\mathcal S_r$ positioned above the fence. In particular, $x$ also
labels an arm of $\lambda^r$.
    We denote by $x^*$ the position of the leg of $\lambda^r$ paired with
this arm. After applying the process described in \S\ref{subsec:barabaque}
that transforms the twisted $p$-bar abacus into the $p$-bar abacus, this
leg yields a black bead at position $x^*$ on the runner $\mathcal
S_{p-r}$, and hence corresponds to a part of $\lambda$ of length $d_{x^*}$
with residue $p-r-1$ modulo $p$. 
In this situation, we say the parts $d_x$ and $d_{x^*}$ of $\lambda$ are
\emph{paired parts}.
\label{brothers}

\begin{example}
\label{ex:brothers}
Let $\lambda=(2,3,4,7,9,13,15)$. We observe that $\lambda$ is a $5$-cocore
partition with $p$-bar quotient
$$\lambda^{(\overline 5)}=(\emptyset, (2,1,1),\,(3,2)).$$
The partition $(2,1,1)$ has one diagonal hook, with arm $1$, and leg $2$.
The partition $( 2 , 1 , 1)$ has a diagonal hook with leg $2$ and arm
$1$, whereas the partition $( 3 , 2)$ has two diagonal hooks.
The first has leg $1$ and arm $2$, and the second has leg $0$ and arm $0$. 
The pairs of beads corresponding to these diagonal hooks are illustrated
in the following figure.
\medskip

\begin{center}
\begin{tabular}{lll}

\begin{tikzpicture}[line cap=round,line join=round,>=triangle
45,x=0.7cm,y=0.7cm, scale=0.8,every node/.style={scale=0.8}]

\draw [dash pattern=on 2pt off 2pt](-1.5,1.5)-- (0.5,1.5);
\draw(-3,1.5)node{$\mathfrak f$};

%%%%%%%%%%%%%%%%%%%%%%%
% \draw (-0.9,-2.3) node[anchor=north west] {$0$};

\draw (-0.6,3.3)-- (-0.6,-1.3);

\begin{scriptsize}
	
\draw (-0.6,2) circle (2.5pt);
\draw (-0.6,3)[fill=black] circle (2.5pt);
% \draw (-0.6,4)[fill=black] circle (2.5pt);
% \draw (-0.6,5)[fill=black] circle (2.5pt);

\draw (-0.6,1)[fill=black] circle (2.5pt);

\draw (-0.6,0)[fill=black]  circle (2.5pt);
\draw (-0.6,-1) circle (2.5pt);
% \draw (-0.6,-2) circle (2.5pt);
\end{scriptsize}

%%%%%%%%%%%%%%%%%%%%%%%%
\draw (0.5,3) -- (0.5,-1); 
\draw[->] (0.5,3) -- (-0.2,3);
\draw[->] (0.5,-1) -- (-0.2,-1);
\end{tikzpicture}
&
\hspace{2cm}
% $\leftrightsquigarrow$
&
\begin{tikzpicture}[line cap=round,line join=round,>=triangle
45,x=0.7cm,y=0.7cm, scale=0.8,every node/.style={scale=0.8}]

\draw [dash pattern=on 2pt off 2pt](-1.5,1.5)-- (0.5,1.5);
\draw(-3,1.5)node{$\mathfrak f$};

%%%%%%%%%%%%%%%%%%%%%%%
% \draw (-0.9,-2.3) node[anchor=north west] {$0$};

\draw (-0.6,4.3)-- (-0.6,-0.3);

\begin{scriptsize}
	
\draw (-0.6,2)[fill=black] circle (2.5pt);
\draw (-0.6,3) circle (2.5pt);
\draw (-0.6,4)[fill=black] circle (2.5pt);
% \draw (-0.6,5)[fill=black] circle (2.5pt);

\draw (-0.6,1) circle (2.5pt);

\draw (-0.6,0) circle (2.5pt);
% \draw (-0.6,-1) circle (2.5pt);
% \draw (-0.6,-2) circle (2.5pt);
\end{scriptsize}

%%%%%%%%%%%%%%%%%%%%%%%%
\draw (1,4) -- (1,-0); 
\draw (0.5,2) -- (0.5,1); 
\draw[->] (0.5,2) -- (-0.2,2);
\draw[->] (0.5,1) -- (-0.2,1);
\draw[->] (1,4) -- (-0.2,4);
\draw[->] (1,0) -- (-0.2,0);
\end{tikzpicture}
\\
\centering Pointed abacus of (2,1,1)&&
\centering Pointed abacus of (3,2)
\end{tabular}
\end{center}
\medskip

Consequently, the $5$-bar abacus associated with $\lambda$ is:
\medskip

\begin{center}
\begin{tikzpicture}[line cap=round,line join=round,>=triangle
45,x=0.7cm,y=0.7cm, scale=0.8,every node/.style={scale=0.8}]

\draw (-2,-0.3)-- (3,-0.3);

%%%%%%%%%%%%%%%%%%%%%%%%%%%
\draw (-2,-0.3) node[anchor=north west] {$0$};

\draw (-1.7,2.3)-- (-1.7,-0.3);

\begin{scriptsize}

\draw (-1.7,0) circle (2.5pt);

\draw (-1.7,1) circle (2.5pt);

\draw (-1.7,2) circle (2.5pt);

\end{scriptsize}

%%%%%%%%%%%%%%%%%%%%%%%
\draw (-0.9,-0.3) node[anchor=north west] {$1$};

\draw (-0.6,2.3)-- (-0.6,-0.3);

\begin{scriptsize}
	
\draw  (-0.6,0) circle (2.5pt);
\draw (-0.6,1)[fill=black] circle (2.5pt);
\node[anchor=west] at (-0.6,1) {\! $x$};
\draw (-0.6,2) circle (2.5pt);

\end{scriptsize}

%%%%%%%%%%%%%%%%%%%%%%%
\draw (0.2,-0.3) node[anchor=north west] {$2$};

\draw (0.5,2.3)-- (0.5,-0.3);

\begin{scriptsize}
	
\draw  (0.5,0)[fill=black] circle (2.5pt);
\node[anchor=west] at (0.5,0) {\! $y$};
\draw (0.5,1) circle (2.5pt);
\draw (0.5,2)[fill=black] circle (2.5pt);
\node[anchor=west] at (0.5,2) {\! $z$};
\end{scriptsize}

%%%%%%%%%%%%%%%%%%%%%%%
\draw (1.3,-0.3) node[anchor=north west] {$3$};

\draw (1.6,2.3)-- (1.6,-0.3);

\begin{scriptsize}
	
\draw  (1.6,0)[fill=black] circle (2.5pt);
\node[anchor=west] at (1.6,0) {\! $y^*$};
\draw (1.6,1)[fill=black] circle (2.5pt);
\node[anchor=west] at (1.6,1) {\! $z^*$};
\draw (1.6,2) circle (2.5pt);
\end{scriptsize}

%%%%%%%%%%%%%%%%%%%%%%%
\draw (2.4,-0.3) node[anchor=north west] {$4$};

\draw (2.7,2.3)-- (2.7,-0.3);

\begin{scriptsize}
	
\draw  (2.7,0) circle (2.5pt);
\draw (2.7,1) circle (2.5pt);
\draw (2.7,2)[fill=black] circle (2.5pt);
\node[anchor=west] at (2.7,2) {\! $x^*$};
\end{scriptsize}
\end{tikzpicture}
\end{center}

Hence, the paired parts of $\lambda$ are
$$(d_x,d_{x^*})=(6,14),\quad
(d_y,d_{y^*})=(2,3)\quad\text{and}\quad
(d_z,d_{z^*})=(12,8).$$
\medskip
\end{example}

\subsection{Galois automorphisms and the bar-Littlewood decomposition}

    Let $n$ be a positive integer, $f\in \mathcal H$, and
$\lambda\in\mathcal D_n$.
In the previous section, we saw that the action of $\mathcal H$ on the
difference character $\Delta_{\lambda}$ is entirely determined by the
action of $f$ on $a_{\lambda}$ or $b_{\lambda}$, depending on whether
$\lambda\in \mathcal D_n^-$ or $\lambda\in \mathcal D_n^+$, respectively.
    For $0\leq r\leq (p-1)$ and a bar-partition $\lambda$, we define
$\mathcal Z_r(\lambda)$ as the set of parts of $\lambda$ with residue
$r$.

    We now present one of the main results of the paper, namely an
explicit computation of $\tau(a_\lambda, f)$ and $ \tau(b_\lambda, f)$
expressed in terms of the Littlewood bar-decomposition of $\lambda $.
Its proof illustrates the effectiveness of the concept of paired pairs
introduced earlier. 

\begin{theorem}
\label{thm:little}
Let $p$ be an odd prime number. 
Let $\lambda$ be a bar-partition.
\begin{enumerate}[(i)]
\item If $\sgn(\lambda_{(\overline p)})=1$ or  $\sgn(\lambda^{[\overline
p]})=1$, then
$$\tau(\lambda,\sigma_p)=\tau(\lambda_{(\overline p)},\sigma_p)
\tau(\lambda^{[\overline p]},\sigma_p).$$
\item If $\sgn(\lambda_{(\overline p)})=-1$ and  $\sgn(\lambda^{[\overline
p]})=-1$, then
$$\tau(\lambda,\sigma_p)=(-1)^{(p-1)/2}\tau(\lambda_{(\overline p)},\sigma_p)
\tau(\lambda^{[\overline p]},\sigma_p).$$
\item If $f$ is any Galois automorphism that acts trivially on the
$p'$-roots of unity, then
$$\tau(\lambda,f)=\tau(\lambda_{(\overline p)},f)
\tau(\lambda^{[\overline p]},f).$$
\end{enumerate}
\end{theorem}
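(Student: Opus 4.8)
The plan is to reduce all three parts of the theorem to a single identity of Jacobi symbols, obtained by tracking the product of parts through the $p$-bar Littlewood decomposition. Write $\kappa=\lambda_{(\overline p)}$ and $\mu=\lambda^{[\overline p]}$, and for a bar-partition $\nu$ set $N_\nu=\prod_i\nu_i$ and let $\delta(\nu)\in\{0,1\}$ equal $1$ exactly when $\sgn(\nu)=-1$. By~\eqref{eq:diffSntilte}--\eqref{eq:diffAntilte} and the discussion that follows them, $\tau(\lambda,\sigma_p)=\tau(\alpha_\lambda,\sigma_p)$ where $\alpha_\lambda=(\sqrt2)^{\delta(\lambda)}\,i^{(|\lambda|-\ell(\lambda)+\delta(\lambda))/2}\,\sqrt{N_\lambda}$ (the exponent of $i$ is an integer by~\eqref{eq:sign}), and similarly for $\kappa$ and $\mu$. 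Since each element of $\mathcal H$ is a ring homomorphism and each of $\sqrt2$, $i^{m}$, $\sqrt{N_\nu}$ has square in $\mathbb Q$, the symbol $\tau(\cdot,\sigma_p)$ is multiplicative on these factors; with~\eqref{eq:sqrt2} and~\eqref{eq:ibouge} this gives $\tau(\lambda,\sigma_p)=\legendre{2}{p}^{\delta(\lambda)}\legendre{-1}{p}^{(|\lambda|-\ell(\lambda)+\delta(\lambda))/2}\tau(\sqrt{N_\lambda},\sigma_p)$, and likewise for $\kappa$, $\mu$. I would then divide the three relations and insert $|\lambda|=|\kappa|+|\mu|$ from~\eqref{eq:sizerelation}, $\ell(\lambda)=\ell(\kappa)+\ell(\mu)-2d$ from~\eqref{eq:longeurrelation} (with $d=\sum_{r=1}^{(p-1)/2}|D_r|\ge 0$ the quantity appearing there), and $\sgn(\lambda)=\sgn(\kappa)\sgn(\mu)$ from Corollary~\ref{cor:signlitteldec} --- which forces $\delta(\lambda)-\delta(\kappa)-\delta(\mu)=-2\delta(\kappa)\delta(\mu)$. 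A short computation then shows the powers of $\legendre{2}{p}$ drop out and
$$\frac{\tau(\lambda,\sigma_p)}{\tau(\lambda_{(\overline p)},\sigma_p)\,\tau(\lambda^{[\overline p]},\sigma_p)}=\legendre{-1}{p}^{\,d-\delta(\kappa)\delta(\mu)}\cdot\frac{\tau(\sqrt{N_\lambda},\sigma_p)}{\tau(\sqrt{N_\kappa},\sigma_p)\,\tau(\sqrt{N_\mu},\sigma_p)}.$$

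Next I would dispose of the remaining ratio of square roots. The parts of $\lambda$ divisible by $p$ are exactly those on the runner $\mathcal R_0$ of its $p$-bar-abacus, and by Remark~\ref{rk:littelabaque} these coincide with the parts of $\mu$ divisible by $p$, while $\kappa$ has none. Writing $Q_\nu$ for the product of the parts of $\nu$ prime to $p$, the common $p$-divisible factor cancels between $\lambda$ and $\mu$; since $p\nmid Q_\nu$, the Lemma of~\S\ref{subsec:actionspin} gives $\tau(\sqrt{N_\lambda},\sigma_p)/\bigl(\tau(\sqrt{N_\kappa},\sigma_p)\tau(\sqrt{N_\mu},\sigma_p)\bigr)=\legendre{Q_\lambda Q_\kappa Q_\mu}{p}$. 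Hence the whole theorem for $\sigma_p$ comes down to proving
$$\legendre{Q_\lambda\,Q_\kappa\,Q_\mu}{p}=\legendre{-1}{p}^{\,d}.$$

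This Jacobi-symbol identity is the heart of the argument. Modulo $p$, a part of residue $r$ is $\equiv r$ and a part of residue $p-r$ is $\equiv -r$, so each of $\legendre{Q_\lambda}{p},\legendre{Q_\kappa}{p},\legendre{Q_\mu}{p}$ factors over the pairs of residues $\{r,p-r\}$ with $1\le r\le(p-1)/2$ into a power of $\legendre{r}{p}$ times a power of $\legendre{-1}{p}$, the exponents being dictated by how many parts have residue $r$ and how many have residue $p-r$. For $\mu$, which is a $p$-cocore, the bar-abacus pairing of~\S\ref{subsection:pairing} shows these two numbers are equal --- to $m_r$, the number of diagonal hooks of $\lambda^r$ --- and that each paired pair $(d_x,d_{x^*})$ of $\mu$ satisfies $d_x d_{x^*}\equiv -r^2\pmod{p}$. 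For $\kappa$ only one residue of the pair occurs, with multiplicity $|c_r|$, where $\overline c_p(\lambda)=(c_1,\ldots,c_{(p-1)/2})$. For $\lambda$ itself the two multiplicities are computed from the push-and-pull exactly as in the proof of Lemma~\ref{lemma:longueurlittelwoodec}, through the sets $A_r,D_r,L_r,M_r$ there. Multiplying the three contributions for a fixed $r$, the relation $\alpha_r=\gamma_r+\beta_r-2|D_r|$ established in that proof makes the total exponent of $\legendre{r}{p}$ even, so it disappears, and the surviving exponent of $\legendre{-1}{p}$ turns out to be congruent to $|D_r|$ modulo $2$; summing over $r$ yields exactly $\legendre{-1}{p}^{\,\sum_r|D_r|}=\legendre{-1}{p}^{\,d}$.

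Putting the pieces together, the displayed ratio equals $\legendre{-1}{p}^{\,2d-\delta(\kappa)\delta(\mu)}=\legendre{-1}{p}^{\,\delta(\kappa)\delta(\mu)}$, which is $1$ unless $\sgn(\lambda_{(\overline p)})=\sgn(\lambda^{[\overline p]})=-1$, and equals $(-1)^{(p-1)/2}$ in that remaining case; this is precisely (i) and (ii). For (iii), any $f\in\mathcal H$ that fixes the $p'$-roots of unity fixes $i$, $\sqrt2$ and each $\sqrt{Q_\nu}$ (as $p\nmid Q_\nu$), so the same multiplicativity together with the cancellation of the common $p$-divisible factor gives $\tau(\lambda,f)=\tau(\lambda_{(\overline p)},f)\,\tau(\lambda^{[\overline p]},f)$ with no sign correction. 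The main obstacle is the combinatorial identity above: it demands precise control, throughout the push-and-pull, of which runner (residue $r$ or $p-r$) each part of $\lambda$ sits on and of the colours of the beads involved --- but this is exactly the information already assembled in the proof of Lemma~\ref{lemma:longueurlittelwoodec}, so no genuinely new combinatorics is needed.
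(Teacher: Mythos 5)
Your proposal is correct and follows essentially the same route as the paper: your key identity $\legendre{Q_\lambda Q_\kappa Q_\mu}{p}=\legendre{-1}{p}^{d}$ is exactly the paper's Equation~(\ref{eq:intpreuve}), established from the same ingredients (the sets $A_r,D_r,L_r,M_r$ and the relation $\alpha_r=\beta_r+\gamma_r-2|D_r|$ from Lemma~\ref{lemma:longueurlittelwoodec}, the bar-abacus pairing with $d_xd_{x^*}\equiv -r^2 \pmod p$, and the cancellation of the runner-$0$ parts), and the bookkeeping of the $i$- and $\sqrt 2$-factors via (\ref{eq:sizerelation}), (\ref{eq:longeurrelation}) and Corollary~\ref{cor:signlitteldec} matches the paper's computation. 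The only differences are organizational (counting residue multiplicities rather than matching beads one at a time, and the uniform $\delta(\kappa)\delta(\mu)$ sign formula in place of the paper's case analysis), which I verified reproduce the paper's conclusion in all sign cases.
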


\begin{proof}
    We employ the same strategy as in the proof of
Lemma~\ref{lemma:longueurlittelwoodec}. Let
$\lambda=(\lambda_1,\ldots,\lambda_w)$ be a bar-partition, with $p$-bar
core $\lambda_{(\overline p)}=(\nu_1,\ldots,\nu_u)$ and $p$-cocore
bar-partition $\lambda^{[\overline p]}=(\mu_1,\ldots,\mu_v)$.
Let $\mathcal R=(\mathcal R_0,\ldots,\mathcal R_{p-1})$, $\mathcal
S=(\mathcal S_0,\ldots,\mathcal S_{p-1})$ and $\mathcal C=(\mathcal
C_0,\ldots,\mathcal C_{p-1})$ denote the $p$-bar abacus of $\lambda$,
$\lambda^{[\overline p]}$ and $\lambda_{(\overline p)}$, respectively. 
As before, we write $\mathcal R'=(\mathcal R_0',\ldots,\mathcal
R_{(t-1)/2}')$, $\mathcal S'=(\mathcal S_0',\ldots,\mathcal S_{(t-1)/2}')$
and $\mathcal C'=(\mathcal C_0',\ldots,\mathcal C_{(t-1)/2}')$ for their
twisted $t$-bar-abacus. Let $\overline c=(c_1,\ldots,c_{(p-1)/2})$ be the
$p$-bar characteristic vector of $\lambda_{(\overline p)}$.

Let $1\leq r\leq (p-1)/2$.
Assume that $c_r=0$. Then $C'_r=\emptyset$ and $\mathcal R'_r=\mathcal
S'_r$. Hence, $\lambda_{(\overline p)}$ has no parts of residues $r$ or
$p-r$, and $\lambda$ and $\lambda^{[\overline p]}$ have the same parts of
residues $r$ and $p-r$, which contribute in the same way to the Jacobi
symbol.

    Assume that $c_r>0$. As in Lemma~\ref{lemma:longueurlittelwoodec}, we
now consider the sets $D_r$, $A_r$, $L_r$, $M_r$, $D'_r$, $A'_r$, $L'_r$
and $M'_r$.
Let $x\in M_r$. Using the bar-abacus pairing introduced in
Section~\ref{subsection:pairing}, 
there exists $x^*\in\mathcal S_{p-r}$ such that $b_{x^*}$ is black.
After pulling up by $c_r$, $x$ is transformed into $x+c_r\in M'_r$ that
labels a part of $\lambda$ with residue $r$. Two cases can then arise for
$x^*$. Either: 
\begin{enumerate}[(1)]
\item $x^*\in L_r$, in which case $x^*-c_r\in L'_r$ still labels a part of
$\lambda$ with residue $p-r$.
It follows that
\begin{equation}
\label{eq:preuvepart1}
\legendre{d_{x+c_r}d_{x^*-c_r}}{p}=\legendre{-r^2}{p}=\legendre{d_xd_{x^*}}{p}.
\end{equation}
\item $x^*\in D_r$, and the slot $c_r-1-x^*$ of $\mathcal R_r$ has a white
bead, so its labels no parts of $\lambda$. 
\end{enumerate}
    In the second case, $\lambda$ has lost a part of residue $p-r$
compared to $\lambda^{[\overline p]}$. Note that, since $c_r>0$, there is
a black bead on $C_r$ at the position $c_r-1-x^*\in D'_r$, and the
corresponding part $d_{c_r-1-x^*}$ of $\lambda_{(\overline p)}$ is neither
in $\lambda$ nor in $\lambda^{[\overline p]}$. It follows that
\begin{equation}
\label{eq:preuvepart2}
\legendre{d_{x+c_r}}{p}
\legendre{d_{c_r-1-x^*}}{p}\legendre{d_xd_{x^*}}{p}=\legendre{-1}{p},
\end{equation}
since $d_{x+c_r}d_{c_r-1-x^*}$ is a square modulo $p$.

    On the other hand, if $y\in A_r$, then it labels no parts of
$\lambda^{[\overline p]}$. After the pull, $y$ is transformed into
$c_r-1-y\in A'_r$, and a part with residue $r$ appears in $\lambda$
compared to $\lambda^{[\overline p]}$. Note that there is a black bead on
$C_r$ at the position $c_r-1-y$. Thus both $\lambda$ and
$\lambda_{(\overline p)}$ have the part $d_{c_r-1-y}$. These two parts
then have the same contribution in the Jacobi symbol.

    The case $c_r<0$ is similar, and (\ref{eq:preuvepart1}) and
(\ref{eq:preuvepart2}) still hold, after adapting the definition of
$D'_r$, $A'_r$, $L'_r$ and $M'_r$ using $|c_r|$ instead of $c_r$ as in the
proof of Lemma~\ref{lemma:longueurlittelwoodec}.

    Finally, since $C_0$ is empty and $\mathcal R_0=\mathcal S_0$, the
parts with residue $0$ in both $\lambda$ and $\lambda^{[\overline p]}$ are
the same and contribute similarly to the Jacobi symbol.

In summary, we obtain
\small
\begin{equation}
\label{eq:intpreuve}
\tau\left(\sqrt{\prod_{i=1}^w}\lambda_i,\sigma_p\right)
\tau\left(\sqrt{\prod_{i=1}^u}\nu_i,\sigma_p\right)\tau\left(\sqrt{\prod_{i=1}^v}\mu_i,\sigma_p\right)
=
\prod_{r=1}^{(p-1)/2}\prod_{x\in
D_r}\legendre{-1}{p}=(-1)^{\frac{(p-1)}{2}d},
\end{equation}
\normalsize
where $d=\sum_{r=1}^{(t-1)/2}|D_r|$.\smallskip

    We will now discuss according to the signs of $\lambda$,
$\lambda_{(\overline p)}$ and $\lambda^{[\overline p]}$.
Assume that $\sgn(\lambda)=-1$. Then Lemma~\ref{cor:signlitteldec} implies
that either $\sgn(\lambda_{(\overline p)})=1$ and
$\sgn(\lambda^{[\overline p]})=-1$, or $\sgn(\lambda_{(\overline p)})=-1$
and $\sgn(\lambda^{[\overline p]})=1$.
    In the first case, by (\ref{eq:diffSntilte}), (\ref{eq:diffAntilte}),
(\ref{eq:longeurrelation}), and~(\ref{eq:sizerelation}), the power of
$\tau(i,\sigma_p)$ that appears in the expression
$$\tau(\lambda,\sigma_p)\tau(\lambda_{(\overline
p)},\sigma_p)\tau(\lambda^{[\overline p]},\sigma_p)$$ is given by
$$\frac{1}{2}\left(|\lambda|-l(\lambda)+1-(|\lambda_{(\overline
p)})|-\ell(\lambda_{(\overline p)})\right)-( [\lambda^{[\overline
p]}|-\ell(\lambda^{[\overline p]})+1))=d.$$
    The same relation holds in the second case and if $\sgn(\lambda)=1$
and $\sgn(\lambda_{(\overline p)})=1$ and $\sgn(\lambda^{[\overline
p]})=1$.
When $\sgn(\lambda)=1$, and $\sgn(\lambda_{(\overline p)})=-1$ and
$\sgn(\lambda^{[\overline p]})=-1$, then the power is $d-1$. 

    Note that, when one of the partitions $\lambda$, $\lambda_{(\overline
p)}$ and $\lambda^{[\overline p]}$ has negative sign,
Corollary~\ref{cor:signlitteldec} implies that in fact, exactly two of
these partitions have sign $-1$. Then by (\ref{eq:diffSntilte}) and
(\ref{eq:diffAntilte}), the quantity $\tau(\sqrt 2,\sigma_p)$ appears
twice in $\tau(\lambda,\sigma_p) \tau(\lambda_{(\overline
p)},\sigma_p)\tau(\lambda^{[\overline p]},\sigma_p)$, so it does not
affect the result of the product.

Now, using (\ref{eq:ibouge}) and (\ref{eq:intpreuve}), we deduce that
$$
\tau(\lambda,\sigma_p)\tau(\lambda_{(\overline p)},\sigma_p)
\tau(\lambda^{[\overline p]},\sigma_p)
=\tau(i,\sigma_p)^{d}(-1)^{\frac{(p-1)}{2}d}=(-1)^{\frac{(p-1)}{2}d}(-1)^{\frac{(p-1)}{2}d}=1$$
in the first three cases, and $(-1)^{(p-1)/2}$ in the last case, as
required.

    Finally, (iii) presents no difficulty. As $f$ acts trivially on the
$p'$-roots of unity, it only acts on the parts with residue $0$, which
appear on the first runner of the $p$-bar abacus. However, $\mathcal
C_0=\emptyset$, and $\mathcal R_0=\mathcal S_0$.
The result follows.
\end{proof}

\section{Humphreys product of groups}
\label{sec:humphreys}

    In this section, we consider two finite groups $\widetilde{G}_1$ and
$\widetilde{G}_2$ belonging to $\mathcal G$. For simplicity of notation,
we use the same symbol $s$ the denote the group homomorphisms
$s_{\widetilde{G}_1}$ and $s_{\widetilde G_2}$ defined in
(\ref{eq:stilde}). The central elements of $\widetilde{G}_1$ and
$\widetilde{G}_2$ which arise from the fact that these groups belong to
$\mathcal G$, are also denoted by the same symbol~$z$.

    We recall that in~\cite{Humphreys}, Humphreys endowed the Cartesian
product $\widetilde{G}_1\times\widetilde{G}_2$ with a group structure, the
multiplication of which is given by 
\begin{equation}
\label{eq:loigamma}
(g_1,g_2)(g'_1,g_2')=(z^{s(g_2)s(g'_1)}g_1 g'_1,g_2g'_2),
\end{equation}
and showed that its quotient by $Z=\{(1,1),(z,z)\}$, denoted by
$\widetilde{G}_1\widehat{\times}\widetilde{G}_2$, belongs in~$\mathcal G$.

    Furthermore, he also described the irreducible characters of
$\widetilde{G}_1\widehat{\times}\widetilde{G}_2$. For any characters
$\chi_1\in\Irr(\widetilde{G}_1)$ and $\chi_2\in\Irr(\widetilde G_2)$, he
constructed a character $\chi_1\widehat{\otimes}\chi_2$ of
$\Irr(\widetilde{G}_1\widehat{\times}\widetilde{G}_2)$ such that
\begin{itemize}
\item[$\star$] Each spin character in
$\Irr(\widetilde{G}_1\widehat{\times}\widetilde{G}_2)^-$ is of the form
$\chi_1\widehat{\otimes}\chi_2$ for $\chi_1\in\Irr(\widetilde{G}_1)^-$ and
$\chi_2\in\Irr(\widetilde{G}_2)^-$.
\item[$\star$] Each non-spin character of
$\Irr(\widetilde{G}_1\widehat{\times}\widetilde{G}_2)^+$ is of the form
$\chi_1\widehat{\otimes}\chi_2$ for $\chi_1\in\Irr(\widetilde{G}_1)^+$ and
$\chi_2\in\Irr(\widetilde{G}_2)^+$. In this case,
$\chi_1\widehat{\otimes}\chi_2=\chi_1\otimes\chi_2$.
\end{itemize}
\smallskip

\subsection{Irreducible spin characters of the twisted Humphreys product.}
\label{subsec:irrhumphreys}

    We write $G=\widetilde G_1\widehat{\times}\widetilde{G}_2$. Let $V$
and $W$ be irreducible spin representations of $\widetilde G_1$ and
$\widetilde G_2$, with corresponding characters $\chi_V$ and
$\chi_W$.\medskip

    First, we assume that $V$ is self-associate and that $W$ is
non-self-associate. When we restrict $V$ to the subgroup
$\widetilde{G}_1^+$, it splits into the direct sum of the two irreducible
spin representations, $V^+$ and $V^-$. We define a function
$t:V_1\longrightarrow \{0,1\}$ by setting $t(v)=0$ if $v\in V^+$ and
$t(v)=1$ if $v\in V^-$. Then as described in~\cite[p.452]{Humphreys}, for
each element $(x,y)\in G$ and $(v,w)\in V\times W$, the following formula
defines an action:
\begin{equation}
\label{eq:repself}
(x,y)(v\widehat\otimes
w)=(-1)^{t(v)s(y)}xv\,\widehat\otimes\, y w
\end{equation}
    When extended linearly, this gives an irreducible non-self-associate
spin representation of $G$, whose character is denoted by
$\chi_{V,W}$. 

    Let $(v_1,\ldots,v_m)$ be a basis of $V^+$. Let
$c\in\widetilde{G}_1\,\backslash\,\widetilde G_1^+$.
According to Clifford theory, we have $V^-=cV^+$, and therefore
$(cv_1,\ldots,cv_m)$ forms a basis of $V^-$.
    Hence, for any $h\in \widetilde{G}_1^+$ and $v\in V^+$, the action of
$h$ acts on $V^-$ is given by
$$hc v=c({}^{c^{-1}}h)v,$$
where ${}^{c^{-1}}h=c^{-1}hc$ denotes the conjugation of $h$ by $c^{-1}$.
In particular, if $(v_1,\ldots,v_m)$ is a basis of $V^+$, and $R(h)$
denotes the matrix of $h$ with respect to this basis, then the matrix
representing the action of $h$ on $V^-$ with respect to the basis $(c
v_1,\ldots,c v_m)$ is $R({}^{c^{-1}}h)$.

    Let $(w_1,\ldots,w_r)$ be a basis of $W$. Then $(v_i\otimes
w_j,\,cv_i\otimes w_j)$ for $1\leq i\leq m$ and $1\leq j\leq r$ forms a
basis of $V\otimes W$. For $(x,y)\in G$, we denote by $R(x,y)$ the matrix
representing the action of $(x,y)$ on $V\widehat\otimes W$ with respect to
this basis. For $g\in\widetilde{G}_2$, we denote by $R'(g)$ the matrix
representing the action of $g$ on $W$ with respect to the basis
$(w_1,\ldots,w_r)$.

    Let $(x,y)\in \widetilde{G}_1^+\times\widetilde{G}_2^+$, and
$c'\in\widetilde{G}_2\,\backslash\,\widetilde{G}_2^+$. Then we have 
$$
R(x,y)=
\begin{pmatrix}
R(x)\otimes R'(y)&0\\
0&R({}^{c^{-1}}x)\otimes R'(y)
\end{pmatrix}$$
and 
$$R(x,yc')=
\begin{pmatrix}
R(x)\otimes R'(yc')&0\\
0&-R({}^{c^{-1}}x)\otimes R'(yc')
\end{pmatrix}.$$
Taking the trace, we obtain the following character values
\begin{equation}
\label{eq:charvalselfnonself}
\chi_{V,W}(x,y)=\chi_{V}(x)\chi_W(y),\quad
\chi_{V,W}(x,yc')=(\chi_{V}^+-\chi_{V}^-)(x)\chi_W(yc').
\end{equation}
Note that the matrices $R(xc,y)$ and
$R(xc,yc')$ have zeros on their diagonal, hence
\begin{equation}
\label{eq:charvalselfnonself2}
\chi_{V,W}(xc,y)=0=\chi_{V,W}(xc,yc').
\end{equation}

    By a symmetric approach, if $V$ is non-self-associate and $W$ is
self-conjugate, then $V\widehat \otimes  W$ is also non-self-associate,
and
\begin{equation}
\label{eq:charvalselfnonself3}
\chi_{V,W}(x,y)=\chi_{V}(x)\chi_W(y),\quad
\chi_{V,W}(xc,y)=\chi_V(xc)(\chi_W^+-\chi_W^-)(y),
\end{equation}
and
$$\chi_{V,W}(x,yc')=0=\chi_{V,W}(xc,yc').$$
\smallskip

    Assume that both $V$ and $W$ are self-associate.
Then~(\ref{eq:repself}) still defines an irreducible spin representation
$V\widehat\otimes W$, which is self-associate. We can also determine its
character values more precisely. To do this, we fix a basis
$(w_1,\ldots,w_k)$ of $W^+$, so that $(w_1,\ldots,w_k,c'w_1,\ldots,c'w_k)$
forms a basis of $W$. Writing the matrices $R(xc,y)$ and $R(x,yc')$ (for
$x\in \widetilde{G}_1^+$ and $y\in\widetilde{G}_2^+$) with respect to the
bases $(v_1,\ldots,v_m,cv_1,\ldots,cv_m)$ and
$(w_1,\ldots,w_k,c'w_1,\ldots,c'w_k)$ as above, we observe that these
matrices have zero entries along the diagonal, which proves that
\begin{equation}
\label{eq:charvalselfnonself4}
\chi_{V,W}(x,y)=\chi_{V}(x)\chi_{W}(y),
\end{equation}
and
\begin{equation}
\label{eq:charvalselfnonself5}
\chi_{V,W}(xc,y)=\chi_{V,W}(x,yc')= \chi_{V,W}(xc,yc')= 0.
\end{equation}
\smallskip

    Finally, we assume that both $V$ and $W$ are non-self-associate. Then
the restriction $V_0$ of $V$ to $\widetilde{G}_1^+$ remains irreducible,
and $M=\Ind_{\widetilde G_1^+}^{\widetilde G_1}(V_0)$ is the direct sum of
$V$ and $\varepsilon_{\widetilde G_1}\otimes V$. The vector space $M$ can
also be viewed as the direct sum of $V_0$ and $cV_0$, where $c\in
\widetilde{G}_1\,\backslash\,\widetilde{G}_1^+$.
Following~\cite[p.454]{Humphreys}, we define $V\widehat\otimes W$ as
follows. As vector space, it is given by $M\otimes W$, and the action of
$(\gamma_1,\gamma_2)\in G$ is defined by
$$(\gamma_1,\gamma_2)(m\otimes w)=(-1)^{t(m)s(\gamma)}\gamma_1m\,\otimes\,
\gamma_2w,$$
where $t:M\longrightarrow \{0,1\}$ is the function defined by $t(m)=0$ if
$m\in V_0$ and $t(m)=1$ if $m\in cV_0$.

\begin{remark} 
\label{rk:nonassociate}
    In~\cite[p.454]{Humphreys}, the function $t$ is defined with respect
to the decomposition  $V\oplus\varepsilon\otimes V$, which is inconsistent
with the proof of~\cite[Theorem 2.4]{Humphreys}. We now revisit that proof
using the definition of $t$ provided earlier. Let $(v_1,\ldots,v_m)$ and
$(w_1,\ldots,w_r)$ be bases of $V$ and $W$, respectively.
    Denote by $R$ and $R'$ the matrix representations of $\widetilde G_1$
and $\widetilde G_2$ on $V$ and $W$, with respect to these bases. Then,
$M$ has basis $(v_1,\ldots,v_m, cv_1,\ldots,cv_m)$. For
$(\gamma_1,\gamma_2)\in G$, we denote by $R(\gamma_1,\gamma_2)$ the matrix
representing the action of $(\gamma_1,\gamma_2)$ on $M$.

Let $c'\in\widetilde{G}_2\,\backslash\,\widetilde{G}_2^+$. Let $x\in
\widetilde G_1^+$ and $y\in \widetilde G_2^+$. Then we have
\scriptsize
$$R(x,y)=
\begin{pmatrix}
R(x)\otimes R'(y)&0\\
0&R({}^{c^{-1}}x)\otimes R'(y)
\end{pmatrix},\quad
R(xc,y)=
\begin{pmatrix}
0&R({}^{c^{-1}}x)\otimes R'(y)\\
R(x)\otimes R'(y)&0
\end{pmatrix},
\small
$$
$$R(x,yc')=
\begin{pmatrix}
R(x)\otimes R'(yc')&0\\
0&-R({}^{c^{-1}}x)\otimes R'(yc')
\end{pmatrix},$$
\normalsize
and
\scriptsize
$$
R(xc,yc')=
\begin{pmatrix}
0&-R(xc^2)\otimes R'(yc')\\
R({}^{c^{-1}}x)\otimes R'(yc')&0
\end{pmatrix}.
$$
\normalsize
\end{remark}
It follows that
$$\chi_{V,W}(x,y)=(\chi_V(x)+\chi_V({}^{c^{-1}}x))\chi_W(y).$$
However, since $\chi_V({}^{c^{-1}}x)=\chi_V(x)$, we obtain
\begin{equation}
\label{eq:calvalnon}
\chi_{V,W}(x,y)=2\chi_V(x)\chi_W(y).
\end{equation}
Similarly, $\chi_{V,W}(x,yc')=(\chi_V(x)-\chi_V(x))\chi_W(yc')=0$.
Additionally, using the fact that the diagonal entries of $R(xc,y)$ and
$R(xc,yc')$ are zero, we deduce that
$$\chi_{V,W}(xc,y)=0=\chi_{V,W}(xc,yc').$$

\subsection{Irreducible spin characters of $G^+$.}

    As we see in \S\ref{subsec:index2}, any irreducible non-self-associate
spin representation of $G$ restricts irreducibly to $G^+$, whereas an
irreducible self-associate representation splits into the direct sum of
two irreducible spin representations.

    In the latter case, for an irreducible self-associate representation
$V$ of $G$, with matrix representation $R_V$ and character $\chi_V$, we
now recall a way for computing the difference character
$\Delta(\chi_V)=\chi_V^+-\chi_V^-$. Since $R_V$ and
$\varepsilon_{G}\otimes R_V$ are conjugate, there exists an intertwiner
$T$ such that 
$$\varepsilon_{{G}}\otimes R_V=TR_VT^{-1}.$$ 
    Furthermore, by Schur's Lemma, $T^2$ is a scalar, which we may assume
to be equal to $1$. In other words, $T^2=I$, so $T$ is diagonalizable with
eigenvalues $\pm 1$. Given that $G^+$ is the kernel of
$\varepsilon_{{G}}$, it follows that $T$ commutes with $R(g)$ for all
$g\in G^+$, and therefore the corresponding eigenspaces are then
$G^+$-stable. These eigenspaces are precisely $V^+$ and $V^-$, and the
notation can be chosen so that $V^+$ is the eigenspace corresponding to
the eigenvalue $1$.
Hence, for all $g\in G^+$,
\begin{equation}
\label{eq:diffentrelacement}
\operatorname{Tr(R_V(g)T)}=\chi_V^+(g)-\chi_V^-(g)=\Delta(\chi)(g).
\end{equation}

    We now return to the two types of self-conjugate representations of
$G$ described in the previous paragraph, namely those of the form
$V\,\widehat\otimes\, W$, where $V$ and $W$ are either both self-associate
or both non-self-associate.
\medskip

    First, assume that both $V$ and $W$ are self-associate. We denote by
$\chi_V$ and $\chi_W$ the corresponding characters. Let $c$ and $c'$ be as
above, and let $$e=(v_1,\ldots,v_n,cv_1,\ldots,cv_n)\quad\text{and}\quad
f=(w_1,\ldots,w_r,c'w_1,\ldots,c'w_r)$$ be bases of $V$ and $W$,
respectively, chosen so as to respect the decompositions $V=V^+\oplus V^-$
and $W=W^+\oplus W^-$. For $(\gamma_1,\gamma_2)\in G$, we denote by
$R(\gamma_1,\gamma_2)$ for the matrix representing the action of
$(\gamma_1,\gamma_2)$ on $V\,\widehat \otimes\,W$ with respect to the
basis of $V\otimes W$ induced by the bases $e$ and $f$.
A straightforward computation shows that the matrix
$$J=\begin{pmatrix}
I&0&0&0\\
0&-I&0&0\\
0&0&-I&0\\
0&0&0&I
\end{pmatrix}$$
is an intertwiner of $R$ satisfying $J^2=I$.
Let $x\in \widetilde G_1^+$ and $y\in\widetilde G_2^+$. By
equation~(\ref{eq:diffentrelacement}), we are interested in the values of
$\operatorname{Tr}(R(x,y)J)$ and $\operatorname{Tr}(R(xc,yc')J)$. In the
latter case, since $R(xc,yc')J$ has zeros on the diagonal, the trace is
equal to $0$. In the first case, we can show that
\small
$$R(x,y)=
\begin{pmatrix}
R(x)\otimes R'(y)&0&0&0\\
0&R({}^{c^{-1}}x)\otimes R'(y)&0&0\\
0&0&R(x)\otimes R'({}^{c'^{-1}}y)&0\\
0&0&0&R({}^{c^{-1}}x)\otimes R'(^{c'^{-1}}y)
\end{pmatrix},$$
\normalsize
where $R$ and $R'$ are the matrix representations of the action of
$\widetilde G_1^+$ and $\widetilde G_2^+$ on $V^+$ and $W^+$,
respectively. Then we obtain that $\Delta(\chi_{V,W})$ is zero except for
\begin{align}
\label{eq:valdiffG+self}
\Delta(\chi_{V,W})(x,y)&=\operatorname{Tr}(R(x,y)J)\\
&=\left(\operatorname{Tr}(R(x))-\operatorname{Tr}(R({}^{c^{-1}}x))\right)
\left(\operatorname{Tr}(R'(y))-\operatorname{Tr}(R'({}^{c'^{-1}}y))\right)
\nonumber\\
&=(\chi_V^+(x)-\chi_V^-(x))(\chi_{W}^+(y)-\chi_W^-(y))\nonumber\\
&=\Delta(\chi_V)(x)\Delta(\chi_W)(y).\nonumber
\end{align}

    Now, assume that $V$ and $W$ are non-self-associate. We keep the same
notation as in Remark~\ref{rk:nonassociate}. Using the matrices given
after the remark, we can check that 
$$T=\begin{pmatrix}
0&iR(c)\\
-iR(c)^{-1}&0
\end{pmatrix}\otimes I$$
is an intertwiner satisfying $T^2=I$. If $x\in \widetilde{G}_1^+$ and
$y\in\widetilde{G}_2^+$, then the matrix $R(x,y)T$ has zeros on the
diagonal, and thus, has trace equal to $0$. Moreover, we have
$$R(xc,yc')T=
\begin{pmatrix}
iR(xc)\otimes R'(yc')&0\\
0&iR({}^{c^{-1}}xc)\otimes R'(yc')
\end{pmatrix}.$$
Hence, $\Delta(\chi_{V,W})$ is zero except for the following case:
\begin{align}
\label{eq:valdiffG+nonself}
\Delta(\chi_{V,W})(xc,yc')&=\operatorname{Tr}(R(xc,yc')T)\\
&=i(\chi_V(xc)\chi_W(yc')+\chi_V({}^{c^{-1}}xc)\chi_W(yc'))\nonumber\\
&=2i\chi_V(xc)\chi_W(yc').\nonumber
\end{align}
Here we use that
$$\chi_V({}^{c^{-1}}xc)=\chi_V(c^{-1}xcc)=\chi_V(cc^{-1}xc)=\chi_V(xc),$$
because $\chi_V$ is a class-function.

\section{Galois--Navarro equivariant correspondence}
\label{sec:main}

    Let $p$ be an odd prime number. Let $\lambda$ be a bar-partition, with
$p$-bar Littlewood decomposition $(\lambda_{(\overline
p)},\lambda^{[\overline p]})$. We set $r=|(\lambda_{(\overline p)}|$ and
$w$ such that $|\lambda^{[\overline p]})|=pw$. We then set $n=r+pw$, so in
particular, $n=|\lambda|$.

\subsection{Notation}
\label{subsec:not}
From now on, we define 
\begin{equation}
\label{eq:grouptilde}
G=\tSym_r\widehat \times \tSym_{pw}.
\end{equation}
    In \S\ref{subsec:irrhumphreys}, we observe that an irreducible spin
representation $V\,\widehat\otimes\, W$ of $G$ is self-associate if and
only if either both $V$ and $W$ are  self-associate or both are
non-self-associate. Spin representations of $\tSym_r$ and $\tSym_{pw}$ are
labeled by bar-partitions of $r$ and $pw$, respectively, with a
representation being self-associate if the sign of the corresponding
bar-partition is equal to $1$, and non-self-associate otherwise. 

    Let $\mu$ and $\nu$ be bar-partitions of $r$ and $\nu$ of $pw$,
respectively. 

\subsubsection*{Self-associate spin characters of $G$}

If both $\mu$ and $\nu$ have sign equal to $1$, then
\begin{equation}
\label{eq:zeta1}
\zeta_{\mu,\nu}=\xi_{\mu}\,\widehat\otimes\,\xi_{\nu}
\end{equation}
is an irreducible spin self-associate character of $G$. If both $\mu$ and
$\nu$ have sign $-1$, then 
\begin{equation}
\label{eq:zeta2}
\zeta_{\mu,\nu}=\xi_{\mu}^+\,\widehat\otimes\,\xi_{\nu}^+=
\xi_{\mu}^+\,\widehat\otimes\,\xi_{\nu}^-=
\xi_{\mu}^-\,\widehat\otimes\,\xi_{\nu}^+=
\xi_{\mu}^-\,\widehat\otimes\,\xi_{\nu}^-
\end{equation}
is also a self-associate character of $G$. To see that the four characters
given in the previous equation are equal, we use (\ref{eq:calvalnon}).

\subsubsection*{Non-self-associate characters of $G$}
    When $\mu$ and $\nu$ have opposite signs, then the Humphreys tensor
product of their associated spin characters is non-self-associate. We can
be more precise. 
Suppose that $\sgn(\mu)=-1$ and $\sgn(\nu)=1$. Then the two characters
$\xi_{\mu}^+\widehat\otimes \xi_\mu$ and $\xi_\mu^-\widehat\otimes
\xi_\nu$ are associate. We define
\begin{equation}
\label{eq:zeta3}
\zeta_{\mu,\nu}^+=\xi_{\mu}^+\widehat\otimes \xi_\nu\quad\text{and}\quad
\zeta_{\mu,\nu}^-=\xi_{\mu}^-\widehat\otimes \xi_\nu.
\end{equation}
Similarly, if $\sgn(\mu)=1$ and $\sgn(\nu)=-1$ the two corresponding  associate
characters of $G$ are
\begin{equation}
\label{eq:zeta4}
\zeta_{\mu,\nu}^+=\xi_{\mu}\widehat\otimes \xi_\nu^+\quad\text{and}\quad
\zeta_{\mu,\nu}^-=\xi_{\mu}\widehat\otimes \xi_\nu^-.
\end{equation}

\subsubsection*{Self and non-self-associate characters of $G^+$} 

    According to our earlier notation (see Remark~\ref{rk:notambigu}),
when $\mu$ and $\nu$ have the same sign we denote by $\zeta_{\mu,\nu}^{+}$
and $\zeta_{\mu,\nu}^-$ the two irreducible non-self-associate spin
characters of $G^+$.

    When $\mu$ and $\nu$ have opposite signs, $\zeta_{\mu,\nu}$
corresponds to an irreducible self-associate spin character of $G^+$.

\subsection{Main result}

\begin{lemma}
\label{lem:map}
Let $\lambda$ be a bar-partition as above. Then we have a well-defined
correspondence
\begin{equation}
\label{eq:mapmain}
\xi_{\lambda}\longleftrightarrow \zeta_{\lambda_{(\overline p)},\lambda^{[\overline
p]}}\quad\text{and}\quad
\xi_{\lambda}^{\pm}\longleftrightarrow \zeta_{\lambda_{(\overline p)},\lambda^{[\overline
p]}}^{\pm}.
\end{equation}
\end{lemma}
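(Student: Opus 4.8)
The plan is to reduce the statement to a sign computation governed by Corollary~\ref{cor:signlitteldec}. Write $\mu=\lambda_{(\overline p)}\in\mathcal D_r$ and $\nu=\lambda^{[\overline p]}\in\mathcal D_{pw}$, so $n=r+pw=|\lambda|$; recall that $G=\tSym_r\widehat\times\tSym_{pw}$ lies in $\mathcal G$. By the conventions of \S\ref{subsec:index2} applied to $\tSym_n\in\mathcal G$, the symbols on the left-hand side of \eqref{eq:mapmain} acquire a meaning that depends on $\sgn(\lambda)$: if $\sgn(\lambda)=1$ then $\xi_\lambda\in\Irr(\tSym_n)^{-}$ is self-associate and $\xi_\lambda^{\pm}\in\Irr(\tAlt_n)^{-}$ form the associate pair, while if $\sgn(\lambda)=-1$ the roles of $\tSym_n$ and $\tAlt_n$ are exchanged. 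Symmetrically, by \S\ref{subsec:irrhumphreys}, the definitions \eqref{eq:zeta1}--\eqref{eq:zeta4}, and the last paragraph of \S\ref{subsec:not}, the symbols on the right-hand side depend on $(\sgn(\mu),\sgn(\nu))$: when $\sgn(\mu)=\sgn(\nu)$ the character $\zeta_{\mu,\nu}$ is self-associate in $\Irr(G)^{-}$ and $\zeta_{\mu,\nu}^{\pm}$ is the associate pair of spin characters of $G^{+}$, whereas when $\sgn(\mu)\neq\sgn(\nu)$ the pair $\zeta_{\mu,\nu}^{\pm}$ lies in $\Irr(G)^{-}$ and $\zeta_{\mu,\nu}$ is a self-associate spin character of $G^{+}$. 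The lemma is exactly the compatibility of these two dictionaries, and the bridge is the multiplicativity $\sgn(\lambda)=\sgn(\mu)\sgn(\nu)$ of Corollary~\ref{cor:signlitteldec}.

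First I would treat the case $\sgn(\lambda)=1$. Then Corollary~\ref{cor:signlitteldec} forces $\sgn(\mu)=\sgn(\nu)$, so $\zeta_{\mu,\nu}$ is defined either by \eqref{eq:zeta1} (both signs $1$) or by \eqref{eq:zeta2} (both signs $-1$, the four expressions there agreeing by \eqref{eq:calvalnon}); in either subcase $\zeta_{\mu,\nu}\in\Irr(G)^{-}$ is self-associate and restricts to an associate pair $\{\zeta_{\mu,\nu}^{+},\zeta_{\mu,\nu}^{-}\}$ of spin characters of $G^{+}$, exactly as $\xi_\lambda\in\Irr(\tSym_n)^{-}$ is self-associate and restricts to $\{\xi_\lambda^{+},\xi_\lambda^{-}\}$ in $\tAlt_n$. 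Both arrows of \eqref{eq:mapmain} therefore pair characters of the same Clifford type under the matching $\tSym_n\leftrightarrow G$, $\tAlt_n\leftrightarrow G^{+}$, so the assignment makes sense. The case $\sgn(\lambda)=-1$ is symmetric: now $\sgn(\mu)\neq\sgn(\nu)$, so $\zeta_{\mu,\nu}^{\pm}$ is the associate pair of $\Irr(G)^{-}$ given by \eqref{eq:zeta3} or \eqref{eq:zeta4} and $\zeta_{\mu,\nu}$ is the self-associate spin character of $G^{+}$ obtained by restriction, matching $\xi_\lambda^{\pm}\in\Irr(\tSym_n)^{-}$ (the associate pair) and $\xi_\lambda\in\Irr(\tAlt_n)^{-}$ (self-associate).

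The point I would flag as the only genuine subtlety is the sense in which the decorated arrow $\xi_\lambda^{\pm}\leftrightarrow\zeta_{\mu,\nu}^{\pm}$ is ``well defined'': the labels ``$+$'' and ``$-$'' on an associate pair, or on a $G/G^{+}$-orbit of size $2$, are intrinsically determined only up to a simultaneous swap (cf. Remark~\ref{rk:notambigu}). Consequently \eqref{eq:mapmain} is to be read as the correspondence of unordered pairs $\{\xi_\lambda^{+},\xi_\lambda^{-}\}\leftrightarrow\{\zeta_{\mu,\nu}^{+},\zeta_{\mu,\nu}^{-}\}$ together with a choice of ordering carried from one side to the other; once one ordering is fixed the other is determined, and that is all that is required here. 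Making this ordering canonical is precisely what will give Theorem~\ref{thm:main} its substance, and is postponed to the sequel. I would close by recording that the correspondence is injective: $\lambda\mapsto(\mu,\nu)$ is injective by the uniqueness of the $p$-bar Littlewood decomposition, and $(\mu,\nu)$ can in turn be recovered from $\zeta_{\mu,\nu}$ (respectively from the associate pair $\zeta_{\mu,\nu}^{\pm}$) by Humphreys' parametrization of the spin characters of $G$ recalled in \S\ref{subsec:irrhumphreys}.
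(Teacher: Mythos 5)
Your argument is correct and is exactly the paper's proof, only spelled out in more detail: the paper simply invokes Corollary~\ref{cor:signlitteldec} together with the case analysis of \S\ref{subsec:not}, which is precisely the sign bookkeeping you carry out. Your extra remarks on the $\pm$-labelling and injectivity are consistent with the paper's conventions (Remark~\ref{rk:notambigu}) and do not change the substance.
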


\begin{remark}
\label{rk:not1}
    For clarity, we recall the conventions from Remark~\ref{rk:notambigu}:
$\xi_{\lambda}$ denotes an irreducible spin character of $\tSym_n$ or
$\tAlt_n$, self-associate if $\lambda \in \mathcal{D}_n^+$ (with
corresponding $\zeta_{\lambda_{(\overline p)}, \lambda^{[\overline p]}}$
self-associate of $G$), and non-self-associate otherwise (with
$\zeta_{\lambda_{(\overline p)}, \lambda^{[\overline p]}}^{\pm}$
non-self-associate of $G$ or $G^+$ accordingly). This notation, though
concise, may be confusing.
\end{remark}

\begin{proof}
The result is a direct consequence of Corollary~\ref{cor:signlitteldec}
and \S\ref{subsec:not}.
\end{proof}

\begin{theorem}
\label{thm:main}
The correspondence described in Equation~(\ref{eq:mapmain}) is equivariant
under the action of the Galois--Navarro group of automorphisms $\mathcal
H$.
\end{theorem}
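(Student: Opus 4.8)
The plan is to reduce the statement to the single sign identity $\tau'(\lambda,f)=\tau(\lambda,f)$ and then read off $\tau'(\lambda,f)$ from Theorem~\ref{thm:little} using the explicit character values of Section~\ref{sec:humphreys}. Here $\tau(\lambda,f)$ is the sign in~(\ref{eq:Sntildemove}) governing the action of $f$ on the pair $\{\xi_\lambda^+,\xi_\lambda^-\}$, and $\tau'(\lambda,f)$ will be the analogous sign governing the action of $f$ on $\{\zeta_{\mu,\nu}^+,\zeta_{\mu,\nu}^-\}$, where $\mu=\lambda_{(\overline p)}$ and $\nu=\lambda^{[\overline p]}$. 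Recall from Corollary~\ref{cor:signlitteldec} that $\sgn(\lambda)=\sgn(\mu)\sgn(\nu)$; so the three regimes ``$\sgn(\mu)=\sgn(\nu)=1$'', ``$\sgn(\mu)=\sgn(\nu)=-1$'', ``$\sgn(\mu)\neq\sgn(\nu)$'' are exactly those of \S\ref{subsec:not}, in which $\xi_\lambda$ and $\zeta_{\mu,\nu}$ are (respectively) self-associate of $\tSym_n$ and of $G$, self-associate of $\tSym_n$ and of $G$, and non-self-associate, with $+/-$ partners in $\tAlt_n$ and $G^+$ in the first two regimes and in $\tSym_n$ and $G$ in the third. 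Since $\mathcal H$ is generated by $\sigma_p$ together with the automorphisms fixing all $p'$-roots of unity, it suffices to treat these two kinds of $f$.

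First, the part $\xi_\lambda\longleftrightarrow\zeta_{\mu,\nu}$ of~(\ref{eq:mapmain}) is automatically $\mathcal H$-equivariant: $\xi_\lambda$ is integer-valued by Schur, and, by the product formulas~(\ref{eq:charvalselfnonself4}), (\ref{eq:calvalnon}), (\ref{eq:charvalselfnonself}) and~(\ref{eq:charvalselfnonself3}), together with the vanishing relations~(\ref{eq:charvalselfnonself5}) and~(\ref{eq:charvalselfnonself2}) and Schur's integrality, the character $\zeta_{\mu,\nu}$ of $G$ (or of $G^+$) is integer-valued as well; hence both are fixed by every $f\in\mathcal H$. For the part $\xi_\lambda^{\pm}\longleftrightarrow\zeta_{\mu,\nu}^{\pm}$, note that $f$ commutes with $\Res$ and $\Ind$, so by the previous sentence and Clifford theory (\S\ref{subsec:index2}) the character $f(\zeta_{\mu,\nu}^{+})$ has the same restriction (resp. induction) to the relevant index-two (super)group as $\zeta_{\mu,\nu}^{+}$, namely the $f$-fixed $\zeta_{\mu,\nu}$; therefore $f(\zeta_{\mu,\nu}^{+})\in\{\zeta_{\mu,\nu}^{+},\zeta_{\mu,\nu}^{-}\}$, and there is a sign $\tau'(\lambda,f)$ with $f(\zeta_{\mu,\nu}^{\pm})=\zeta_{\mu,\nu}^{\pm\tau'(\lambda,f)}$, equivalently $f(\zeta_{\mu,\nu}^{+}-\zeta_{\mu,\nu}^{-})=\tau'(\lambda,f)\,(\zeta_{\mu,\nu}^{+}-\zeta_{\mu,\nu}^{-})$. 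Since the correspondence matches $+$ with $+$ and $-$ with $-$, equivariance of this part is precisely the assertion $\tau'(\lambda,f)=\tau(\lambda,f)$.

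It remains to compute $\zeta_{\mu,\nu}^{+}-\zeta_{\mu,\nu}^{-}$ via Section~\ref{sec:humphreys} and apply $f$, using throughout that $f$ multiplies the difference character $\Delta_\mu$ by $\tau(\mu,f)$ (and $\Delta_\nu$ by $\tau(\nu,f)$) by~(\ref{eq:Antildemove}), and that for an element $g$ of $\tSym_r$ whose image in $\Sym_r$ is odd one has $\xi_\mu^{+}(g)+\xi_\mu^{-}(g)=(1+\varepsilon)(g)\,\xi_\mu^{+}(g)=0$, so that $f(\xi_\mu^{+}(g))=\xi_\mu^{\tau(\mu,f)}(g)=\tau(\mu,f)\,\xi_\mu^{+}(g)$ by~(\ref{eq:Sntildemove}) (and similarly for $\nu$). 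If $\sgn(\mu)=\sgn(\nu)=1$, then~(\ref{eq:valdiffG+self}) gives $(\zeta_{\mu,\nu}^{+}-\zeta_{\mu,\nu}^{-})(x,y)=\Delta_\mu(x)\,\Delta_\nu(y)$ and zero elsewhere, so $\tau'(\lambda,f)=\tau(\mu,f)\tau(\nu,f)$. If $\sgn(\mu)\neq\sgn(\nu)$, subtracting the $\varepsilon_G$-twist in~(\ref{eq:charvalselfnonself}) or~(\ref{eq:charvalselfnonself3}) (according to which of $\mu,\nu$ is non-self-associate) shows that $\zeta_{\mu,\nu}^{+}-\zeta_{\mu,\nu}^{-}$ is supported on a single class-type, where it equals $2\,\xi_\mu^{+}(xc)\,\Delta_\nu(y)$ or $2\,\Delta_\mu(x)\,\xi_\nu^{+}(yc')$, the spin-character factor evaluated at an element of odd image; hence again $\tau'(\lambda,f)=\tau(\mu,f)\tau(\nu,f)$. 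If $\sgn(\mu)=\sgn(\nu)=-1$, then~(\ref{eq:valdiffG+nonself}) gives $(\zeta_{\mu,\nu}^{+}-\zeta_{\mu,\nu}^{-})(xc,yc')=2i\,\xi_\mu^{+}(xc)\,\xi_\nu^{+}(yc')$ and zero elsewhere, with $xc,yc'$ of odd image; since $f(i)=i^{p}=\tau(i,\sigma_p)\,i$ for $f=\sigma_p$ and $f(i)=i$ when $f$ fixes all $p'$-roots, we obtain $\tau'(\lambda,f)=\tau(i,f)\,\tau(\mu,f)\tau(\nu,f)$, where $\tau(i,\sigma_p)=(-1)^{(p-1)/2}$ by~(\ref{eq:ibouge}) and $\tau(i,f)=1$ otherwise. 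Comparing these three outcomes with parts (i), (i)/(iii), and (ii)/(iii) of Theorem~\ref{thm:little} — the extra factor $(-1)^{(p-1)/2}$ in Theorem~\ref{thm:little}(ii) being matched exactly by the factor $i$ in~(\ref{eq:valdiffG+nonself}) — yields $\tau'(\lambda,f)=\tau(\lambda,f)$ in every case, which completes the proof.

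The step I expect to demand the most care is the case-by-case extraction in the last paragraph: isolating, from the many formulas of Section~\ref{sec:humphreys}, the precise shape of $\zeta_{\mu,\nu}^{+}-\zeta_{\mu,\nu}^{-}$ in each sign regime — in particular recognizing the non-self-associate $\widehat\otimes$ non-self-associate case as the \emph{unique} source of the extra factor $i$ — and then lining this up, regime by regime, with the case split of Theorem~\ref{thm:little}, so that the seemingly mysterious sign $(-1)^{(p-1)/2}$ there is accounted for entirely by $\sigma_p(i)=i^{p}$.
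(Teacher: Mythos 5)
Your proposal is correct and takes essentially the same route as the paper's own proof: a case split according to the signs of $\lambda_{(\overline p)}$ and $\lambda^{[\overline p]}$ (via Corollary~\ref{cor:signlitteldec}), reduction to comparing the sign governing the $\mathcal H$-action on the pair $\{\zeta^{+},\zeta^{-}\}$ with $\tau(\lambda,f)$, extraction of that sign from the Humphreys character formulas (\ref{eq:charvalselfnonself}), (\ref{eq:charvalselfnonself3}), (\ref{eq:valdiffG+self}), (\ref{eq:valdiffG+nonself}), and matching with parts (i)--(iii) of Theorem~\ref{thm:little}, the factor $(-1)^{(p-1)/2}$ being absorbed by $\sigma_p(i)=i^{p}$ exactly as in the paper. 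The only differences are cosmetic (you phrase everything through difference characters and justify that $f$ permutes the pair by Clifford theory rather than citing \cite[Lemma 2.2]{BrNa}).
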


\begin{proof}
    Let $\lambda$ be a bar-partition with $p$-bar Littlewood decomposition
$(\lambda_{(\overline p)},\lambda^{[\overline p]})$. As previously, we
write $r=|\lambda_{(\overline p)}|$, $pw=|\lambda^{[\overline p]}|$ and
$n=r+pw$.

    First, assume that $\lambda\in\mathcal D_n^-$. Then we have
$\sgn(\lambda)=-1$, and by Corollary~\ref{cor:signlitteldec}, the
partitions $\lambda_{(\overline p)}$ and $\lambda^{[\overline p]}$ have
opposite signs.
\begin{enumerate}[(i)]
\item Let $\xi_{\lambda}$ and $\zeta_{\lambda_{\overline
p},\lambda^{[\overline p]}}$ denote the corresponding spin irreducible
self-associate characters of $\tAlt_n$ and $G^+$, respectively. Recall
that $\xi_{\lambda}$ takes integer values. Furthermore, the elements of
$G^+$ are of the form $(x,y)$, where both $x$ and $y$ with the same
signature. If $x$ and $y$ have negative signature, then
Equation~(\ref{eq:charvalselfnonself2}) implies that $\zeta_{\lambda_{\overline
p},\lambda^{[\overline p]}}(x,y)=0$. If $x$ and $y$ have positive
signature, then Equation~(\ref{eq:charvalselfnonself2}) implies that 
\begin{equation}
\label{eq:valG+}
\zeta_{\lambda_{\overline
p},\lambda^{[\overline p]}}(x,y)=\xi_{\lambda_{\overline
p}}(x)\xi_{\lambda^{[\overline p]}}(y)\in\Z.
\end{equation}
This proves that $\zeta_{\lambda_{\overline
p},\lambda^{[\overline p]}}$ also takes values in $\Z$, and thus, both
$\xi_{\lambda}$ and $\zeta_{\lambda_{\overline
p},\lambda^{[\overline p]}}$ are fixed by $\mathcal H$.
\item Let $\xi_{\lambda}^{\pm}$ and $\zeta_{\lambda_{\overline
p},\lambda^{[\overline p]}}^{\pm}$ denote the spin irreducible
non-self-associate characters of $\tSym_n$ and $G$, respectively. 
Without loss of generality, we assume that $\sgn(\lambda_{(\overline
p)})=1$ and  $\sgn(\lambda^{[\overline
p]})=-1$. The case where $\sgn(\lambda_{(\overline
p)})=-1$ and  $\sgn(\lambda^{[\overline
p]})=1$ follows identically by Equation~(\ref{eq:charvalselfnonself3}). 
Let $f\in\mathcal H$.
Since $\zeta_{\lambda_{\overline
p},\lambda^{[\overline p]}}\in\Irr(G^+)$ is fixed by $f$, \cite[Lemma
2.2]{BrNa} gives that $f$ acts on $\left\{\zeta_{\lambda_{\overline
p},\lambda^{[\overline p]}}^+,\zeta_{\lambda_{\overline
p},\lambda^{[\overline p]}}^-\right\}$. Then we can introduce a sign
$\tau(\zeta_{\lambda_{\overline
p},\lambda^{[\overline p]}},f)$ such that
\begin{equation}
\label{eq:defsignhum}
f(\zeta_{\lambda_{\overline
p},\lambda^{[\overline p]}}^{\pm})=\zeta_{\lambda_{\overline
p},\lambda^{[\overline p]}}^{\pm \tau(\zeta_{\lambda_{\overline
p},\lambda^{[\overline p]}},f)}.
\end{equation}
    Note that, by Equations~(\ref{eq:charvalselfnonself}) and
(\ref{eq:charvalselfnonself2}), the only elements for which, possibly,
$\zeta_{\lambda_{\overline p},\lambda^{[\overline p]}}^{\pm}$ does not
take integer values are those of the form $(x,y)$, with $x$ having
signature $1$ and $y$ having signature $-1$. Moreover, in this case,
Equation~(\ref{eq:charvalselfnonself}) shows that
\begin{align}
\label{eq:relationbarcore}
\tau(\zeta_{\lambda_{\overline
p},\lambda^{[\overline p]}},f)&=\tau(\lambda_{(\overline
p)},f)\tau(\lambda^{[\overline p]},f)\\
\nonumber&=\tau(\lambda,f).
\end{align}
The last equality comes from (i) and (iii) of Theorem~\ref{thm:little}. 
The result follows.
\end{enumerate}

    Now, assume that $\lambda\in\mathcal D_n^+$. Hence, $\sgn(\lambda)=1$,
and $\sgn(\lambda_{(\overline p)})=\sgn(\lambda^{[\overline p]})$ by
Corollary~\ref{cor:signlitteldec}. In this case,
$\xi_{\lambda}\in\Irr(\tSym_n)$ takes values in $\Z$, and the same holds
for $\zeta_{\lambda_{(\overline p)},\lambda^{[\overline p]}}$ by
Equations~(\ref{eq:charvalselfnonself4}) and (\ref{eq:calvalnon}).
Therefore, $\mathcal H$ acts on
$\{\xi_{\lambda}^-,\xi_{\lambda}^+\}\subseteq\Irr(\tAlt_n)$ and also on
$\{\zeta_{\lambda_{(\overline p)},\lambda^{[\overline p]}}^-,
\zeta_{\lambda_{(\overline p)},\lambda^{[\overline
p]}}^+\}\subseteq\Irr(G^+)$.
Let $f\in\mathcal H$. Understanding the action of $f$ on
$\zeta_{\lambda_{(\overline p)},\lambda^{[\overline p]}}^{\pm}$ is
equivalent to understanding its action on
$\Delta(\zeta_{\lambda_{(\overline p)},\lambda^{[\overline p]}})$. As
previously, we set $\tau(\zeta_{\lambda_{(\overline
p)},\lambda^{[\overline p]}},f)=1$ if $f$ fixes
$\Delta(\zeta_{\lambda_{(\overline p)},\lambda^{[\overline p]}})$, and
$-1$ otherwise. 
We now proceed with a more detailed analysis, depending to the signs of
$\lambda_{(\overline p)}$ and $\lambda^{[\overline p]}$.
\begin{enumerate}[(i)]
\item Assume that $\sgn(\lambda_{(\overline p)})=\sgn(\lambda^{[\overline
p]})=1$. The difference character $\Delta(\zeta_{\lambda_{(\overline
p)},\lambda^{[\overline p]}})$ vanishes except on elements of the
form $(x,y)\in G^+$, where $x\in\tAlt_{r}$ and $y\in\tAlt_{pw}$. Moreover,
Equation~(\ref{eq:valdiffG+self}) gives
\begin{align*}
f(\Delta(\zeta_{\lambda_{(\overline p)},\lambda^{[\overline p]}})(x,y))&=
f(\Delta(\xi_{\lambda_{(\overline
p)}})(x))f(\Delta(\xi_{\lambda^{[\overline p]}})(y)),
\end{align*}
hence
\begin{align}
\label{eq:relationbarcore2}
\tau(\zeta_{\lambda_{(\overline
p)},\lambda^{[\overline p]}},f)
&=\tau(\lambda_{(\overline p)},f)\tau(\lambda^{[\overline p]},f)\\
\nonumber&=\tau(\lambda,f)
\end{align}
by (i) and (iii) of Theorem~\ref{thm:little}. It follows that
$f$ commutes with $\xi_{\lambda}^{\pm}\mapsto \zeta_{\lambda_{(\overline
p)},\lambda^{[\overline p]}}^{\pm}$, as required.
\item Assume that $\sgn(\lambda_{(\overline p)})=\sgn(\lambda^{[\overline
p]})=-1$.
The difference character $\Delta(\zeta_{\lambda_{(\overline
p)},\lambda^{[\overline p]}})$ vanishes except on elements of the
form $(x,y)\in G^+$, where $x\in\tSym_r\backslash \tAlt_{r}$ and
$y\in\tSym_{pw}\backslash \tAlt_{pw}$. By
Equation~(\ref{eq:valdiffG+nonself}), we have
\begin{align*}
f(\Delta(\zeta_{\lambda_{(\overline p)},\lambda^{[\overline
p]}})(x,y))&=f(2i)
f(\xi_{\lambda_{(\overline p)}}(x))f(\xi_{\lambda^{[\overline p]}}(y)).
\end{align*}
Then we deduce from Equation~(\ref{eq:Sntildemove}) that
\begin{equation}
\label{eq:movepreuve}
\tau(\zeta_{\lambda_{(\overline
p)},\lambda^{[\overline p]}},f)=\tau(i,f)\tau(\lambda_{(\overline
p)},f)\tau(\lambda^{[\overline p]},f).
\end{equation}
If $f$ acts trivially on the $p'$-roots of unity, then $\tau(i,f)=1$, and
part (iii) of Theorem \ref{thm:little} implies that
$$\tau(\zeta_{\lambda_{(\overline
p)},\lambda^{[\overline p]}},f)=\tau(\lambda,f).$$
If $f=\sigma_p$, then $\tau(i,\sigma_p)=(-1)^{(p-1)/2}$, and by part (ii)
of Theorem \ref{thm:little} and Equation~(\ref{eq:movepreuve}), we obtain
that
$$\tau(\zeta_{\lambda_{(\overline
p)},\lambda^{[\overline p]}},\sigma_p)=\tau(\lambda,\sigma_p).$$
In all cases, this proves that the correspondence 
is $\mathcal H$-equivariant.
\end{enumerate}
\end{proof}

\begin{theorem}
\label{thm:conserveval}
Let $\lambda\in\mathcal D_n$.
Assume that $r=|\lambda_{(\overline p)}|$ is the residue of $n$ modulo
$p$. Then the characters labeled by $\lambda$ and and their corresponding
characters under the map (\ref{eq:mapmain}) have the same $p$-valuation.
\end{theorem}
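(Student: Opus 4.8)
The plan is to read off both $p$-valuations from the classical degree formulas and to reduce the claimed equality to a statement about bar lengths; throughout I write $v_p$ for the $p$-adic valuation. First I would record the degrees. By Schur's formula \cite{schur}, for any $\lambda=(\lambda_1,\dots,\lambda_m)\in\mathcal D_n$ the degree of $\xi_\lambda$ (or of $\xi_\lambda^{\pm}$), whether taken in $\tSym_n$ or in $\tAlt_n$, is a power of $2$ times
\[
g_\lambda:=\frac{n!}{\lambda_1!\cdots\lambda_m!}\prod_{1\le i<j\le m}\frac{\lambda_i-\lambda_j}{\lambda_i+\lambda_j},
\]
the exponent of $2$ depending only on $n$, $m$ and $\sgn(\lambda)$. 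Evaluating the Humphreys character values of \S\ref{subsec:irrhumphreys} at the identity --- i.e.\ (\ref{eq:charvalselfnonself}), (\ref{eq:charvalselfnonself3}), (\ref{eq:charvalselfnonself4}) and (\ref{eq:calvalnon}) --- and applying Schur's formula to each of the two factors, the degree of $\zeta_{\lambda_{(\overline p)},\lambda^{[\overline p]}}$ (or of $\zeta_{\lambda_{(\overline p)},\lambda^{[\overline p]}}^{\pm}$), in $G$ or in $G^+$, is a power of $2$ times $g_{\lambda_{(\overline p)}}\,g_{\lambda^{[\overline p]}}$. Since $p$ is odd these powers of $2$ are invisible to $v_p$, so the theorem is equivalent to $v_p(g_\lambda)=v_p(g_{\lambda_{(\overline p)}})+v_p(g_{\lambda^{[\overline p]}})$.

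Next I would use the hypothesis. Saying that $r=|\lambda_{(\overline p)}|$ is the residue of $n$ modulo $p$ is exactly the inequality $0\le r<p$. Then $|\tSym_r|=2\cdot r!$ is prime to $p$, so every character degree of $\tSym_r$ is prime to $p$, whence $v_p(g_{\lambda_{(\overline p)}})=0$ by the previous paragraph. It remains to show $v_p(g_\lambda)=v_p(g_{\lambda^{[\overline p]}})$, where $\lambda$ and its $p$-bar cocore $\lambda^{[\overline p]}$ have the \emph{same} $p$-bar quotient $(\lambda^0,\lambda^1,\dots,\lambda^{(p-1)/2})$ and the same $p$-bar weight $w$, while $|\lambda|=n=r+pw$ and $|\lambda^{[\overline p]}|=pw$.

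For this I would invoke the bar-length formula (see \cite{olsson}): $g_\lambda=n!/P(\lambda)$, where $P(\lambda)$ is the product of the bar lengths of $\lambda$ (the convention for bar lengths being the one matching the $g_\lambda$ above). Hence $v_p(g_\lambda)=v_p(n!)-\sum_{h}v_p(h)$, the sum over the bar lengths $h$ of $\lambda$ divisible by $p$. The combinatorial input --- which is precisely what the passage to the twisted $p$-bar-abacus in \S\ref{subsec:barabaque} encodes, and which can also be extracted from the bar-abacus pairing of \S\ref{subsection:pairing} --- is that division by $p$ matches the multiset of bar lengths of $\lambda$ divisible by $p$ with the disjoint union of the bar lengths of $\lambda^0$ and the ordinary hook lengths of $\lambda^1,\dots,\lambda^{(p-1)/2}$; in particular this multiset has exactly $w$ elements. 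Substituting,
\[
v_p(g_\lambda)=v_p(n!)-w-v_p\bigl(P(\lambda^0)\bigr)-\sum_{r=1}^{(p-1)/2} v_p\!\left(\prod_{c\in\lambda^r}h_c\right),
\]
with $P(\lambda^0)$ the product of bar lengths of $\lambda^0$ and $h_c$ the hook length of the box $c$. Every term other than $v_p(n!)$ depends only on the $p$-bar quotient and on $w$; and since $0\le r<p$ we have $pw<n<p(w+1)$, so $v_p(n!)=v_p((pw)!)$. The same identity applied to $\lambda^{[\overline p]}$ --- of size $pw$, with the same $p$-bar quotient and weight, and with $v_p((pw)!)=v_p(n!)$ playing the role of $v_p(n!)$ --- gives the same value. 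Hence $v_p(g_\lambda)=v_p(g_{\lambda^{[\overline p]}})$, and combined with the first two paragraphs this proves the theorem.

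The hard part is the bar-length matching used in the third paragraph: one must follow, runner by runner, how the bar lengths of $\lambda$ divisible by $p$ are recorded in the twisted $p$-bar-abacus, carefully separating the $0$-th runner (which produces the bar-partition $\lambda^0$ and genuine bar lengths) from the paired runners $r$, $p-r$ with $r\ge 1$ (which fuse, via the push-and-pull procedure, into a pointed $1$-abacus and produce the ordinary hook lengths of $\lambda^r$). Everything else is the bookkeeping above.
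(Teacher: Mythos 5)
Your argument is essentially the paper's own proof: both reduce, via the Humphreys degree formulas and Clifford theory for the index-two subgroups, to comparing $p$-valuations of the bar-length degree formula, use $\nu_p(n!)=\nu_p((pw)!)$ (you by counting multiples of $p$, the paper by Legendre's formula), observe that the $p$-bar core of size $r<p$ contributes valuation zero, and conclude because the bar lengths divisible by $p$ are governed solely by the common $p$-bar quotient of $\lambda$ and $\lambda^{[\overline p]}$. The only difference is cosmetic: the combinatorial matching you flag as ``the hard part'' and sketch via the twisted abacus is exactly \cite[Theorem 4.3]{olsson}, which the paper simply cites, and in fact the weaker statement that the multiset of bars divisible by $p$ depends only on the $p$-bar quotient already suffices.
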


\begin{proof}
    Let $\lambda$ be a bar-partition. We recall from~\cite[(7.2)]{olsson}
that the degree of $\xi_{\lambda}$ is, up to a power of $2$, equal to
$$\frac{n!}{\prod_{h\in\mathcal H(\lambda)}h},$$ where $\mathcal
H(\lambda)$ is the set of bar-hooklengths of $\lambda$
(see~\cite[p.26]{olsson}). For any integer $N$, we denote by $\nu_p(N)$
the $p$-valuation of $N$. Therefore, we have
\begin{equation}
\label{eq:degree}
\nu_p(\xi_{\lambda}(1))=\nu_p(n!)-\nu_p\left(\prod_{h\in\mathcal
H(\lambda)}h\right).
\end{equation}
Now, we write
$$n=r+a_1p+\cdots+a_rp^r$$ for the $p$-adic decomposition of $n$. Then 
$$pw=a_1p+\cdots+a_rp^r$$ is the $p$-adic decomposition of $pw$. Moreover, 
recall that
$$\nu_p(n!)=\frac{1}{p-1}\left(n-\sum_{j=0}^r a_j\right).$$
    In particular, this gives $\nu_p(n!)=\nu_p((pw)!)$. On the other hand,
\cite[Theorem 4.3]{olsson} shows that the set of bar-hooklengths divisible
by $p$ depends only on the $p$-bar quotient of $\lambda$. Since $\lambda$
and $\lambda^{[\overline p]}$ have the same $p$-bar quotient, they have
the same set of bar-hooklengths divisible by $p$. This implies that
$$\nu_p\left(\prod_{h\in\mathcal
H(\lambda)}h\right)=\nu_p\left(\prod_{h\in\mathcal
H(\lambda^{[\overline p]})}h\right).$$
This proves that
$$\nu_p(\xi_{\lambda}(1))=\nu_p(\xi_{\lambda^{[\overline
p]}}(1)).$$

    Since $r<p$ and $\lambda_{(\overline p)}$ is a $p$-bar core, it
contains no bar hooks divisible by $p$. Hence,
$\nu_p(\xi_{\lambda_{(\overline p)}}(1))=1$. 
Then by Equations~(\ref{eq:charvalselfnonself4}) and~(\ref{eq:calvalnon}),
we obtain that
$$\nu_p(\xi_\lambda(1))=\nu_p(\xi_{\lambda^{[\overline p]}}(1))=
\nu_p(\xi_{\lambda_{(\overline p)}}(1))
\nu_p(\xi_{\lambda^{[\overline p]}}(1))=\nu_p(\zeta_{\lambda_{(\underline
p)},\lambda^{[\underline p]}}(1)).$$
Furthermore, by Clifford theory, since $G^+$ has index two, we also deduce
that
$$
\nu_p(\xi_\lambda^\pm(1))=\nu_p(\xi_\lambda^\pm(1))\quad\text{and}\quad
\nu_p\left(\zeta_{\lambda_{(\overline p)},\lambda^{[\overline p]}}(1)\right)=
\nu_p\left(\zeta_{\lambda_{(\overline p)},
\lambda^{[\overline p]}}^{\pm}(1)\right).$$
The result follows.
\end{proof}

\section{Consequences in Block Theory}
\label{sec:pblocks}

    Let $r\geq 1$ and $w\geq 1$ be positive integers, and $p$ be an odd
prime number. Let $\kappa$ be a $p$-bar core partition of size $r$. We set
$n=pw+r$

\subsection{Blocks of the double covering groups of the symmetric and
alternating groups.}

We denote by 
\begin{equation}
\label{eq:blocktilde}
\B_{\kappa,w} \quad\text{and}\quad \bs_{\kappa,w}
\end{equation}
the spin $p$-blocks of $\tSym_n$ and $\tAlt_n$, respectively, labeled by
$\kappa$ and with bar $p$-weight $w$. More precisely, according to the
\emph{Morris Conjecture} (see~\cite{Humphreys-Blocks}), $\B_{\kappa,w}$
consists of the spin characters of $\tSym_n$ whose labeling bar-partition
$\lambda$ has $p$-bar core $\kappa$ and bar $p$-weight $w_{\overline
p}(\lambda)=w$. Since $\B_{\kappa,w}$ always contains at least one
non-self-associated character, it follow from \cite[Theorem 9.2]{Navarro}
that the set of the constituents obtained by restricting the characters in
$\B_{\kappa,w}$ to $\tAlt_n$, which is precisely $\bs_{\kappa,w}$, forms a
single spin $p$-block of $\tAlt_n$.

\subsection{Blocks of $G$ and $G^+$.}

    We continue to use the notation introduced above, in particular, $G$
denotes the group defined in (\ref{eq:grouptilde}).
We write
\begin{equation}
\label{eq:mamapart}
\mathcal B_w=\{\lambda\mid \lambda_{\overline
p}=\emptyset,\,w_{\overline p}(\lambda)=w\},
\end{equation}
where $w_{\overline p}(\lambda)$ is defined in~(\ref{eq:pweightcar}).
We define the subsets $$B_{\kappa,w}\subseteq \Irr(G)\quad\text{and}\quad
b_{\kappa,w}\subseteq \Irr(G^+)$$ by
\begin{equation}
\label{eq:defBhat}
\left\{ 
\begin{array}{ll}
\zeta_{\kappa, \mu} & \text{if } \zeta_{\kappa, \mu} \text{ is self-associate}, \\
\zeta_{\kappa, \mu}^{+}, \zeta_{\kappa, \mu}^{-} & \text{otherwise}
\end{array}
\;\middle|\; \mu \in \mathcal B_{w} \right\}.
\end{equation}

\begin{remark} Note that, by Remark~\ref{rk:notambigu}, due to our choice
of notation, the characters $\zeta_{\kappa,\mu}$, $\zeta_{\kappa,\mu}^+$
and $\zeta_{\kappa,\mu}^-$ denote either a character of $G$, or a
character of $G^+$ depending on the context. Hence, $B_{\kappa,w}$ and
$b_{\kappa,w}$ are well-defined.
\end{remark}

\begin{theorem}
\label{thm:blockhat}
The sets $B_{\kappa,w}$ and $b_{\kappa,w}$ form a $p$-block of $G$ and
$G^+$, respectively. Moreover:
\begin{enumerate}[(i)]
\item These blocks have the same defect group as the $p$-block 
$B_{\emptyset,w}$.
\item If $\zeta$ is an irreducible character of $G$ (or $G^+$) labeled by
$\kappa$ and $\mu\in \mathcal B_{w}$ as in~(\ref{eq:defBhat}), then
$\zeta$ and the irreducible character
labeled by $\mu$ (in the corresponding $p$-block of $\tSym_{pw}$ or
$\tAlt_{pw}$) have the same $p$-height.
\end{enumerate}
\end{theorem}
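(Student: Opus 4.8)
The plan is to identify $B_{\kappa,w}$ and $b_{\kappa,w}$ as $p$-blocks by comparing central characters modulo a maximal ideal $\mathfrak{p}$ above $p$, using the explicit character values of Section~\ref{sec:humphreys}, and then to extract the defect group and the $p$-heights from the corresponding invariants of the spin $p$-block $B_{\emptyset,w}$ of $\tSym_{pw}$ (resp.\ $\bs_{\emptyset,w}$ of $\tAlt_{pw}$). First I would record two facts independent of the Humphreys product. Since $\kappa$ is a $p$-bar core it has empty $p$-bar quotient, so by \cite[Theorem 4.3]{olsson} no bar-hook of $\kappa$ is divisible by $p$; the bar-hook-length formula \cite[(7.2)]{olsson} then gives $\nu_p(\xi_\kappa(1))=\nu_p(r!)=\nu_p(|\tSym_r|)$, and hence $\xi_\kappa$ — and likewise $\xi_\kappa^{\pm}$, whose degrees have the same $p$-valuation — lies in a $p$-block of $\tSym_r$ (resp.\ $\tAlt_r$) of defect zero; such a block consists of that single character. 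Secondly, by the Morris conjecture (\cite{Humphreys-Blocks}, now a theorem), the spin characters of $\tSym_{pw}$ labelled by the bar-partitions $\mu\in\mathcal B_w$ are precisely those lying in $B_{\emptyset,w}$ (resp.\ of $\tAlt_{pw}$ in $\bs_{\emptyset,w}$), which is a block of full defect.

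Next I would analyse conjugacy in $G$. Writing an element of $G$ as the image of a pair $(x,y)$ with $x\in\tSym_r$, $y\in\tSym_{pw}$, the multiplication rule~(\ref{eq:loigamma}) shows that when $s(x)=s(y)=0$ (that is, $x\in\tAlt_r$, $y\in\tAlt_{pw}$) conjugation by an arbitrary element of $G$ introduces no power of $z$, so the $G$-class of $(x,y)$ is the image of (the $\tSym_r$-class $C_1$ of $x$)$\,\times\,$(the $\tSym_{pw}$-class $C_2$ of $y$), of size $|C_1||C_2|$ or $\tfrac12|C_1||C_2|$, a $p'$-multiple of $|C_1||C_2|$. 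Combining this with the vanishing statements~(\ref{eq:charvalselfnonself2}), (\ref{eq:charvalselfnonself3}), (\ref{eq:charvalselfnonself5}) and the factorisations~(\ref{eq:charvalselfnonself}), (\ref{eq:charvalselfnonself4}), (\ref{eq:calvalnon}) (together with~(\ref{eq:valdiffG+self}) and~(\ref{eq:valdiffG+nonself}) for the difference characters on $G^+$), the central character of the character labelled by $(\kappa,\mu)$ factors, modulo $\mathfrak{p}$ and up to a $p'$-unit, as the product of the central character of $\xi_\kappa$ on $C_1$ with that of $\xi_\mu$ on $C_2$, and vanishes on classes not of this shape. Since all $\xi_\mu$ with $\mu\in\mathcal B_w$ share the central character of $B_{\emptyset,w}$ modulo $\mathfrak{p}$, the characters labelled by $(\kappa,\mu)$ all share one central character modulo $\mathfrak{p}$, so they lie in a common block; conversely, testing on classes of the form (class in $\tSym_r$)$\,\times\,\{1\}$ and using that $\xi_\kappa$ is alone in its defect-zero block forces the $\tSym_r$-label to be $\kappa$, and then testing on all classes forces the $\tSym_{pw}$-label into $\mathcal B_w$; non-spin characters of $G$ are excluded since $z$ acts on them by $+1$ and on the $\zeta$'s by $-1$. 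Hence $B_{\kappa,w}$ is exactly one $p$-block of $G$. For $G^+$, one argues as the authors already did for $\bs_{\kappa,w}$: $[G:G^+]$ is prime to $p$, $B_{\kappa,w}$ contains a non-self-associate character (as $\mathcal B_w$ contains bar-partitions of both signs for $w\geq1$), and \cite[Theorem 9.2]{Navarro} then shows that the constituents of the restrictions of $B_{\kappa,w}$ — namely $b_{\kappa,w}$ — form a single $p$-block of $G^+$.

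For (i) and (ii) the remaining work is $p$-adic bookkeeping. From~(\ref{eq:charvalselfnonself4}) and~(\ref{eq:calvalnon}) (and Clifford theory for the $\pm$-characters) the degree of the character labelled by $(\kappa,\mu)$ has $p$-valuation $\nu_p(\xi_\kappa(1))+\nu_p(\xi_\mu(1))=\nu_p(r!)+\nu_p(\xi_\mu(1))$. Since $|G|=2\,r!\,(pw)!$ and $B_{\emptyset,w}$ has full defect in $\tSym_{pw}$, there is $\mu_0\in\mathcal B_w$ with $\nu_p(\xi_{\mu_0}(1))=0$, so the defect group $D$ of $B_{\kappa,w}$ satisfies $\nu_p(|D|)=\nu_p((pw)!)$. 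To see that $D$ is, up to conjugacy, a Sylow $p$-subgroup of $\Sym_{pw}$ — equivalently a defect group of $B_{\emptyset,w}$ — I would use that $\{1\}\,\widehat\times\,\tSym_{pw}$ is a subgroup of $G$ isomorphic to $\tSym_{pw}$ on which the twist in~(\ref{eq:loigamma}) is trivial, and that the $\tSym_r$-factor is inert because the block of $\xi_\kappa$ is a matrix algebra; concretely, the twisted (super) tensor decomposition $e^-\mathcal O G\cong e^-\mathcal O\tSym_r\,\widehat{\otimes}\,e^-\mathcal O\tSym_{pw}$ underlying Humphreys' $\widehat{\otimes}$-construction makes $B_{\kappa,w}$ Morita equivalent to $B_{\emptyset,w}$ with $\zeta_{\kappa,\mu}\leftrightarrow\xi_\mu$, whence (i). Finally, for $\zeta$ in $B_{\kappa,w}$ labelled by $\mu$ one gets $\operatorname{ht}(\zeta)=\nu_p(\zeta(1))-\bigl(\nu_p(|G|)-\nu_p(|D|)\bigr)=\nu_p(\xi_\mu(1))$, while $\operatorname{ht}(\xi_\mu)=\nu_p(\xi_\mu(1))$ because $B_{\emptyset,w}$ has full defect; the $\pm$-variants have the same degree-valuation and the same defect group (every $p$-subgroup of $G$ lies in $G^+$ and $[G:G^+]$ is prime to $p$), so the heights again coincide, giving (ii).

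The step I expect to be the main obstacle is making the central-character comparison fully rigorous. One must verify that the cocycle in~(\ref{eq:loigamma}) really disappears on the relevant classes and keep track of class sizes modulo $Z=\{(1,1),(z,z)\}$; but the genuinely delicate point is the behaviour of the \emph{non-self-associate} characters on classes meeting $(\tSym_r\setminus\tAlt_r)\times(\tSym_{pw}\setminus\tAlt_{pw})$, where the difference-character values~(\ref{eq:valdiffG+self}) and~(\ref{eq:valdiffG+nonself}) and the blocks of $\tAlt_r$, $\tAlt_{pw}$ enter and the factorisation is less transparent; and, for (i), pinning down that $D$ is the $\tSym_{pw}$-Sylow rather than merely a subgroup of the right order. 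Both are cleanly settled by the twisted tensor-product structure of $e^-\mathcal O G$ (equivalently an explicit Brauer-pair computation), the $\tSym_r$-factor being inert precisely because $\xi_\kappa$ has defect zero.
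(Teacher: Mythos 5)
Your architecture (central characters modulo $\mathfrak p$ plus degree bookkeeping) is genuinely different from the paper's, but the two points you yourself single out as delicate are exactly where the proof is missing, and the device you invoke to settle them does not do the job. First, the asserted factorization ``and vanishes on classes not of this shape'' is false for the non-self-associate members of $B_{\kappa,w}$: when $\sgn(\kappa)=1$ and $\sgn(\mu)=-1$, formula~(\ref{eq:charvalselfnonself}) gives $\zeta_{\kappa,\mu}^{\pm}(x,yc')=\bigl(\xi_\kappa^+-\xi_\kappa^-\bigr)(x)\,\xi_\mu^{\pm}(yc')$, which is nonzero on the classes whose first component has cycle type $\kappa$ and whose second has cycle type $\mu$, whereas the self-associate characters of the same putative block vanish identically on all such classes by~(\ref{eq:charvalselfnonself5}) and~(\ref{eq:calvalnon}). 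So your criterion requires the nontrivial congruence $|C|\,\Delta(\xi_\kappa)(x)\,\xi_\mu^{\pm}(yc')/\zeta(1)\equiv 0\ (\mathrm{mod}\ \mathfrak p)$, involving the irrational values $b_\kappa$, $a_\mu$ and the $p$-part of the class size; this is never verified, and verifying it by hand essentially amounts to redoing the block computation you are trying to avoid. Deferring it to the superalgebra decomposition $e^-\mathcal O G\cong e^-\mathcal O\tSym_r\,\widehat\otimes\,e^-\mathcal O\tSym_{pw}$ is not admissible as the argument stands: that decomposition is neither proved in your proposal nor available in the paper (it is genuinely heavier machinery, of Kessar type), so the central step of the ``single block'' claim remains unproved. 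The same unverified congruences underlie your converse direction that pins the labels to $\kappa$ and to $\mathcal B_w$.

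Second, even granting that decomposition, the inference ``Morita equivalent to $B_{\emptyset,w}$, whence (i)'' is a non sequitur: a Morita equivalence only controls Morita-invariant data — in particular the defect, read off the Cartan matrix, i.e.\ the \emph{order} $|D|$, which you had already obtained from the degree computation — and it does not identify $D$ up to conjugacy with a Sylow $p$-subgroup of the $\tSym_{pw}$-factor (it is not even known in general that Morita equivalent blocks have isomorphic defect groups). That identification is the actual content of (i) once $r\geq p$, when $\nu_p(r!)>0$ and the order alone no longer determines $D$. The clean way to get it is the paper's route: pass to $\Gamma=\tSym_r\times\tSym_{pw}$ with multiplication~(\ref{eq:loigamma}), use the normal subgroup $H=\tAlt_r\times\tAlt_{pw}$ of index $4$ prime to $p$, on which the product is untwisted; then \cite[Theorem 9.2]{Navarro} shows $B_{\kappa,w}$ is a union of blocks over the $\Gamma$-orbit $b=\{\xi_\kappa\ \text{or}\ \xi_\kappa^{\pm}\}\otimes\bs_{\emptyset,w}$, the orbit argument under $\Irr(\Gamma/H)$ together with the self-associate character $\zeta_{\kappa,\alpha^{\pm}}$ reduces this union to a single block, and block covering over a normal subgroup of $p'$-index puts a defect group of $B_{\kappa,w}$ inside $H$, equal to a defect group of $\{\text{defect zero}\}\otimes\bs_{\emptyset,w}$; the heights in (ii) then follow from Clifford theory over $H$, with no central-character congruences or Morita equivalences needed. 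Your $p$-adic bookkeeping in (ii) is fine once the block statement and (i) are in place, but note it also silently uses that $\B_{\emptyset,w}$ has full defect in $\tSym_{pw}$, which should be justified (e.g.\ via \cite[Lemma (13.2)]{olsson}) rather than assumed.
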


\begin{proof}
First, we will consider the case of $B_{\kappa,w}$.
From now on, we may, without loss of generality, replace $G$ %and $G^+$
with its covering groups $\Gamma= \tSym_r\times \tSym_{pw}$, % and $\Gamma^+$, 
equipped with the multiplication defined in
(\ref{eq:loigamma}), since $G$ is a quotient of $\Gamma$. Moreover, as the
covering map $\Gamma\longrightarrow G$ has a $2$-group as kernel and $p$
is odd, the study of $p$-blocks and irreducible characters are unaffected:
there is a natural bijection between the $p$-blocks of $G$ and those
$p$-blocks of $\Gamma$ whose irreducible characters are trivial on the
kernel.

Using the multiplication~(\ref{eq:loigamma}), we check that
$$H=\tAlt_r\times \tAlt_{pw}$$ 
is a normal subgroup of $\Gamma$ with index $4$. Observe that, restricted
to $H$, the group operation is just the standard direct product.
Recall that a $p$-block of $\tSym_r$ labeled by $\kappa$ has defect $0$,
and consists of one irreducible character
$$
\xi_{\kappa} \quad (\text{if $\kappa$ has positive sign}), \quad
\text{or one of } \xi_{\kappa}^{+},\,\xi_{\kappa}^{-} \quad (\text{if
$\kappa$ has negative sign}).
$$
    In the first case, the character $\xi_{\kappa}$ is self-associate, and
its restriction to $\tAlt_r$ splits into two distinct irreducible
characters $\xi_{\kappa}^+$ and $\xi_{\kappa}^-$, which lie in two
separate defect-zero $p$-blocks of $\tAlt_r$. In the second case, the
characters $\xi_\kappa^+$ and $\xi_\kappa^-$ are non-self-associate and
restrict to the same irreducible character $\xi_\kappa$ of $\tAlt_r$,
which lies in a single defect-zero $p$-block.

    Let $b \subseteq \Irr(H)$ be the set of all irreducible characters of
the form $ \chi_1 \otimes \chi_2$, where
$\chi_1\in\{\xi_\kappa,\xi_\kappa^\pm\}$ according to the sign of
$\kappa$,

    Since $H$ is a direct product (and $p$-blocks of such a group are
products of $p$-blocks), we deduce that $b$ is the union of one or two
blocks, namely 
$$\{\xi_\kappa\}\otimes \bs_{\emptyset,w}, \quad\text{or}\quad
(\{\xi_\kappa^+\}\otimes \bs_{\emptyset,w})\sqcup (\{\xi_\kappa^-\}\otimes
\bs_{\emptyset,w}).$$
    Furthermore, (\ref{eq:loigamma}) shows that $b$ is a
$\Gamma$-conjugacy class of blocks of $H$.
    Then using the character values of the characters of $B_{\kappa,w}$
given in equations (\ref{eq:charvalselfnonself}),
(\ref{eq:charvalselfnonself3}), (\ref{eq:charvalselfnonself4}) and
(\ref{eq:calvalnon}), we see that the characters of $B_{\kappa,w}$ are
precisely those characters of $\Gamma$ whose restrictions to $H$ have
constituents in $b$. Therefore, by applying~\cite[Theorem 9.2]{Navarro} to
$\Gamma$ and $H$, we conclude that $B_{\gamma,w}$ is a union of the
$p$-blocks of $\Gamma$ lying above $b$.

    We now show that this union of blocks is, in fact, a single $p$-block.
To establish this, we observe that the $p$-blocks above $b$ form an orbit
under the action of $$\Irr(\Gamma/H)=\langle
\operatorname{sgn}_{\tSym_r},\operatorname{sgn}_{\tSym_{pw}}\rangle,$$
where $\Irr(\Gamma/H)$ acts on $\Irr(\Gamma)$ by tensoring characters. It
therefore suffices to show that $B_{\kappa,w}$ contains a character fixed
by $\Irr(\Gamma/H)$. Before proceeding, we note that the two
bar-partitions with empty $p$-bar core and $p$-bar quotients 
$$((w),\emptyset,\dots,\emptyset)\quad\text{and}\quad
(\emptyset,(w),\emptyset,\dots,\emptyset)$$
have opposite signs. Indeed:

\begin{enumerate}[(i)]
\item Assume that $w$ is even. Then the spin character with empty $p$-bar
core and $p$-bar quotient $((w),\emptyset,\dots,\emptyset)$ has only one
part, so its sign is $-1$. The one with $p$-bar quotient
$(\emptyset,(w),\emptyset,\dots,\emptyset)$ has an even number of parts
$\ell$ (by the ``symmetric pair theory'' developed on page \pageref{brothers}),
and its sign is $(-1)^{pw-\ell}=1$.
\item Assume that $w$ is odd. This time, the same computation shows that
the first character has sign $1$ and the second $-1$.
\end{enumerate}
    In what follows, we denote by $\alpha^+$ the spin character (with
empty $p$-core) among the two described above that has sign $1$, and by
$\alpha^-$ the one with sign $-1$.

    We can now prove that $B_{\kappa,w}$ contains a character fixed by
$\Irr(\Gamma/H)$. Assume first that $\kappa$ has sign $1$. Then
$\zeta_{\kappa, \alpha^+}$ is self-associate. If $\kappa$ has sign $-1$,
then $\zeta_{\kappa,\alpha^-}$ is again self-associate. In either case,
$B_{\kappa,w}$ contains a self-associated character. From the values
computed in equations~(\ref{eq:charvalselfnonself4}),
(\ref{eq:charvalselfnonself5}) and (\ref{eq:calvalnon}), we see that such
a character is invariant under both $\sgn_{\tSym_r}$ and
$\sgn_{\tSym_{pw}}$. This completes the proof of the first part of the
statement.
\smallskip

    Now, since $H$ is a normal subgroup of $\Gamma$ of index prime to $p$,
any $p$-block of $\Gamma$ lying above a $p$-block of $H$ has the same
defect group. In particular, $B_{\kappa,w}$ has the same defect group as
any $p$-block of $H$ contained in $b$, say $b_H$. But $H$ is a direct
product, and the first component of $b_H$ has defect $0$, so its defect
group is trivial. It follows that the defect group of $B_{\kappa,w}$
coincides with that of $\bs_{\emptyset, w}$, which is also the one of
$\B_{\emptyset,w}$ since $[\tSym_{pw},\tAlt_{pw}]=2$ is prime to $p$. 
\smallskip

    Recall that if $\chi$ is an irreducible character, then the $p$-height
$\operatorname{ht}(\chi)$ of $\chi$ is given by
\begin{equation}
\label{eq:height}
\operatorname{ht}(\chi)=\nu_p(\chi(1))+d-\nu_p(|G|),
\end{equation}
where $d$ is the defect of the $p$-block containing $\chi$. Moreover, we
have $\nu_p(\Gamma)=\nu_p(\Gamma^+)=\nu_p(H)$ since $[\Gamma,H]$ is prime
to $p$. The $p$-blocks $B_{\kappa,w}$, $b_{\kappa,w}$ and $b_H$ have the
same defect groups by (i). Furthermore, by Clifford theory \cite[Theorem
6.11]{isaacs}, since $H$ is normal in $\Gamma$ with index prime to $p$,
the $p$-part of the degree of an irreducible character of $\Gamma$ is
equal to that of any irreducible constituents upon restriction to $H$.
Hence, for any $\zeta_{\kappa,\mu}\in B_{\kappa,w}$, if
$\zeta_{\kappa,\mu}'$ denotes one such constituent, then 
\begin{align}
\label{eq:calpreuveheight}
\operatorname{ht}(\zeta_{\kappa,\mu})&=\nu_p(\zeta_{\kappa,\mu}(1))+d-\nu_p(|\Gamma|)\\
\nonumber&=\nu_p(\zeta'_{\kappa,\mu}(1))+d-\nu_p(|H|)\\
\nonumber&=\operatorname{ht}(\zeta'_{\kappa,\nu})\\
\nonumber&=\operatorname{ht}(\xi_{\nu}).
\end{align}
    The last equality follows from the fact that $H=\tAlt_r \times
\tAlt_{pw}$ is a direct product, so the $p$-height of a character is the
sum of the $p$-heights of its components, together with the fact that
$\kappa$ labels a defect zero $p$-block of $\tAlt_r$, hence corresponds to
a character of $p$-height $0$. 

    Note that equation~(\ref{eq:calpreuveheight}) is unambiguous, as
$\operatorname{ht}(\xi_\nu) = \operatorname{ht}(\xi_\nu^\pm)$ holds
whether these characters are considered as characters of $\tSym_{pw}$ or
in $\tAlt_{pw}$, since the index $[\tSym_{pw} : \tAlt_{pw}] = 2$ is prime
to $p$. 
\smallskip

    Now, we consider the case of $\bs_{\kappa,w}$. Since $[G,G^+]=2$ is
prime to $p$, the constituents appearing in the restriction of characters
from $\B_{\kappa,w}$ to $G^+$ form a union of $p$-blocks of $G^+$.
Moreover, if $\sgn(\kappa)=1$, then $\zeta_{\kappa,\alpha^-}$ is not
self-associate. If $\sgn(\kappa)=-1$, then there are two associate
characters of $\B_{\kappa,w}$ labeled by $\kappa$ and $\alpha^+$. In both
cases $B_{\kappa,w}$ contains at least one non-self-associate character.
It then follows from~\cite[Theorem 9.2]{Navarro} that $\bs_{\kappa,w}$ is
a single $p$-block of $G^+$.
The points (i) and (ii) follow from the fact that the result holds for
the caracters of $\B_{\kappa,w}$, and that $[G:G^+]=2$ is prime to $p$.
\end{proof}

\begin{example}
    Assume $w=1$. First, we describe $\mathcal B_1$. This set contains
$(p+1)/2$ bar-partitions, characterized by having $p$-bar quotient with
exactely one non-empty component equal to $(1)$. More precisely, these
partitions are of the form
$$\mu_k=(p-k,k),$$
for integers $0\leq k\leq\frac{p-1}{2}$. Moreover, we have
$$\sgn(\mu_k)=\left\{\begin{array}{l}
1\quad\text{if }k=0\\
-1\quad\text{otherwise.}
\end{array}
\right.$$
Now, if the sign of $\kappa$ is $1$, then the $p$-block of $G$
$$B_{\kappa,1}=\{\zeta_{\kappa,\mu_0},\zeta_{\kappa,\mu_k}^{\pm},\ 1\leq k\leq
(p-1)/2\}$$
contains $p$ spin characters, and the $p$-block of $G^+$
$$b_{\kappa,1}=\{\zeta_{\kappa,\mu_0}^{\pm},\zeta_{\kappa,\mu_k},\ 1\leq k\leq
(p-1)/2\}$$
contains $\frac 1 2(p+3)$ spin characters.
If $\kappa$ is of sign $-1$, we obtain the same result, swapping
$B_{\kappa,1}$ and $b_{\kappa,1}$.
In particular, if $\kappa$ and $\kappa'$ have opposite signs, then it is
clear that $B_{\kappa,1}$ et $b_{\kappa',1}$ have the same cardinality. 
\end{example}

\subsection{Bijections between $p$-blocks}

    The notation introduced above will be used throughout. We define a map
$\Phi$, by setting, for  any bar-partition $\lambda$ of $n$ with $p$-bar
core $\kappa$, 
$$\Phi(\xi_{\lambda})=\zeta_{\kappa,\lambda^{[\overline
p]}}\quad\text{and}\quad
\Phi(\xi_{\lambda}^{\pm})=\zeta_{\kappa,\lambda^{[\overline p]}}^{\pm}.$$
Note that, by our choice of notation, $\Phi$ can be viewed either as a map
between the two $p$-blocks $\B_{\kappa,w}$ and $B_{\kappa,w}$, or between
the two $p$-blocks $\bs_{\kappa,w}$ and $b_{\kappa,w}$.

\begin{theorem} 
\label{thm:bijblockstildehump}
    The map $\Phi$ defines a bijection between $\B_{\kappa,w}$ and
$B_{\kappa,w}$ (respectively, between $\bs_{\kappa,w}$ and
$b_{\kappa,w}$), which is $\mathcal H$-equivariant, height-preserving, and
preserves the defect of the corresponding characters.
\end{theorem}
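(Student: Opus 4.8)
The plan is to assemble the statement from results already in place, since at this point $\Phi$ is nothing but the restriction of the correspondence~(\ref{eq:mapmain}) to the blocks in question: for every bar-partition $\lambda$ labelling a character of $\B_{\kappa,w}$ or $\bs_{\kappa,w}$ one has $\lambda_{(\overline p)}=\kappa$, so $\Phi(\xi_\lambda)=\zeta_{\lambda_{(\overline p)},\lambda^{[\overline p]}}$ and $\Phi(\xi_\lambda^{\pm})=\zeta_{\lambda_{(\overline p)},\lambda^{[\overline p]}}^{\pm}$. I would first check bijectivity. The $p$-bar Littlewood map, restricted to a fixed $p$-bar core $\kappa$ and fixed weight $w$, is a bijection between the bar-partitions $\lambda$ of $n$ with $p$-bar core $\kappa$ and $p$-bar weight $w$ and the bar-partitions $\mu\in\mathcal B_w$, via $\lambda\mapsto\lambda^{[\overline p]}$ (the inverse reconstructing $\lambda$ from $\kappa$ and the $p$-bar quotient $\mu^{(\overline p)}$). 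By the Morris conjecture the former set labels $\B_{\kappa,w}$ (resp.\ $\bs_{\kappa,w}$), and by~(\ref{eq:defBhat}) the latter labels $B_{\kappa,w}$ (resp.\ $b_{\kappa,w}$). Corollary~\ref{cor:signlitteldec} gives $\sgn(\lambda)=\sgn(\kappa)\sgn(\lambda^{[\overline p]})$, and the discussion in \S\ref{subsec:not} shows that $\zeta_{\kappa,\mu}$ is self-associate exactly when $\sgn(\kappa)=\sgn(\mu)$, i.e.\ exactly when $\sgn(\lambda)=1$, which is also exactly when $\xi_\lambda$ is self-associate. Hence a label contributes the same number of characters (one if $\sgn(\lambda)=1$, two associate characters otherwise), of matching associate type, to the source block as to the target block; combined with the well-definedness of the $\pm$-bookkeeping furnished by Lemma~\ref{lem:map}, this gives that $\Phi$ is a bijection $\B_{\kappa,w}\to B_{\kappa,w}$ and $\bs_{\kappa,w}\to b_{\kappa,w}$.

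For $\mathcal H$-equivariance I would note that, by~(\ref{eq:Sntildemove}) and~(\ref{eq:defsignhum}), each $f\in\mathcal H$ fixes the labelling bar-partition of any spin character, moving at most the $\pm$ within an associate pair; hence $f$ stabilises each of $\B_{\kappa,w}$, $\bs_{\kappa,w}$, $B_{\kappa,w}$, $b_{\kappa,w}$ setwise. On these blocks $\Phi$ coincides with the correspondence~(\ref{eq:mapmain}), which Theorem~\ref{thm:main} already asserts to be $\mathcal H$-equivariant; so $\Phi$ inherits $\mathcal H$-equivariance with no further work.

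For defect groups and heights I would argue as follows. By Theorem~\ref{thm:blockhat}(i) and its proof, $B_{\kappa,w}$ and $b_{\kappa,w}$ have the same defect group as $\B_{\emptyset,w}$. On the other side, the defect group of a spin $p$-block of $\tSym_n$ (resp.\ $\tAlt_n$) depends only on its $p$-bar weight — it is a Sylow $p$-subgroup of $\tSym_{pw}$ — so $\B_{\kappa,w}$ and $\bs_{\kappa,w}$ too have the same defect group as $\B_{\emptyset,w}$; hence $\Phi$ is defect-preserving. For heights: since $\lambda$ and $\lambda^{[\overline p]}$ have the same $p$-bar quotient, \cite[Theorem~4.3]{olsson} gives that they have the same set of bar-hook lengths divisible by $p$, so together with~(\ref{eq:degree}) this yields $\nu_p(\xi_\lambda(1))-\nu_p(\xi_{\lambda^{[\overline p]}}(1))=\nu_p(n!)-\nu_p((pw)!)$ (the same being implicit in the proof of Theorem~\ref{thm:conserveval}). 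Feeding this into the height formula~(\ref{eq:height}), using $\nu_p(|\tSym_n|)=\nu_p(n!)$, $\nu_p(|\tSym_{pw}|)=\nu_p((pw)!)$ and the equality of defects just obtained, one gets $\operatorname{ht}(\xi_\lambda)=\operatorname{ht}(\xi_{\lambda^{[\overline p]}})=\operatorname{ht}(\zeta_{\kappa,\lambda^{[\overline p]}})$, the last equality being Theorem~\ref{thm:blockhat}(ii). The corresponding statements for $\bs_{\kappa,w}$ and $b_{\kappa,w}$ then follow because the indices $[\tSym_n:\tAlt_n]$ and $[G:G^+]$ equal $2$, which is prime to $p$, exactly as in the final paragraph of the proof of Theorem~\ref{thm:blockhat}.

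The hard part is not really a single obstacle but the discipline of keeping the self-associate versus non-self-associate dichotomy and the $\pm$-labels consistent between the two sides, so that bijectivity and $\mathcal H$-equivariance hold simultaneously — which is precisely what Lemma~\ref{lem:map} and Theorem~\ref{thm:main} were designed to supply. The only genuinely external ingredient is the classical identification of the defect group of a spin $p$-block as a Sylow $p$-subgroup of $\tSym_{pw}$, for which I would cite~\cite{olsson} and~\cite{Humphreys-Blocks}.
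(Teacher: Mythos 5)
Your proposal is correct, and for the bijectivity and $\mathcal H$-equivariance parts it coincides with the paper's proof: both reduce bijectivity to the coincidence of labels together with the matching of associate type (Corollary~\ref{cor:signlitteldec}, \S\ref{subsec:not}, Lemma~\ref{lem:map}), and both obtain equivariance as an immediate consequence of Theorem~\ref{thm:main}. Where you diverge is the height and defect step. The paper quotes Olsson's Proposition (13.8) directly --- the $p$-height of the character(s) labeled by a bar-partition depends only on its $p$-bar quotient --- applies it to $\lambda$ and $\lambda^{[\overline p]}$, and then invokes Theorem~\ref{thm:blockhat}(ii); for defects it combines Olsson's Lemma (13.2) (the defect of a spin block is determined by its $p$-bar weight) with Theorem~\ref{thm:blockhat}(i). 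You instead re-derive the needed instance of (13.8): the bar-hook degree formula~(\ref{eq:degree}) together with \cite[Theorem 4.3]{olsson} gives $\nu_p(\xi_\lambda(1))-\nu_p(\xi_{\lambda^{[\overline p]}}(1))=\nu_p(n!)-\nu_p((pw)!)$, and feeding this into~(\ref{eq:height}) the height difference collapses to the difference of the block defects, which vanishes by the classical identification of the defect group of a spin block of $p$-bar weight $w$ with a Sylow $p$-subgroup of $\tSym_{pw}$. Your computation is valid (note that you only need equality of the two defects, i.e.\ that the defect is a function of $w$ alone, which is exactly what the paper extracts from Olsson (13.2); the Sylow-subgroup statement is stronger, and for it the cleanest reference is Cabanes' determination of defect groups of spin blocks rather than \cite{Humphreys-Blocks} alone). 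What the paper's route buys is brevity --- two citations replace the valuation computation --- while yours buys a more self-contained argument that reuses the machinery already set up for Theorem~\ref{thm:conserveval} and makes transparent exactly which external input (defect depends only on the weight) is being used. Both proofs then descend to $\bs_{\kappa,w}$ and $b_{\kappa,w}$ by the same Clifford-theory observation that the relevant indices are $2$, prime to $p$, and both deduce preservation of character defects from equality of block defects plus preservation of heights.
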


\begin{proof}
    The characters of $\B_{\kappa,w}$ and $B_{\kappa,w}$ share the same
set of labels. Moreover,~by (\ref{eq:mapmain}), the map $\Phi$ preserves
the property of being associate or non-associate. Hence, $\Phi$ is
bijective. The same holds for the blocks $\bs_{\kappa,w}$ and
$b_{\kappa,w}$. It follows immediately from Theorem~\ref{thm:main} that
the map $\Phi$ is $\mathcal{H}$-equivariant.

    Now, we prove that $\Phi$ preserves the $p$-height of characters. To
this end, we use~\cite[Proposition (13.8)]{olsson}, which states that if
$\lambda$ is a bar-partition with $p$-bar core $\kappa$ and $p$-bar
quotient $\lambda^{(\overline{p})}$, then the $p$-height of the
character(s) labeled by $\lambda$ (whether one or two depending on
associativity) depends only on $\lambda^{(\overline{p})}$.

    In particular, applying the formula to $\lambda^{[\overline p]}$,
which has empty $p$-bar core and the same $p$-bar quotient
$\lambda^{(\overline p)}$, we deduce that the $p$-height of the
character(s) of $\tSym_n$ labeled by $\lambda$ equals the $p$-height of
the character(s) of $\tSym_{pw}$ labeled by $\lambda^{[\overline p]}$. We
then conclude by point (ii) of Theorem~\ref{thm:blockhat}.

    As $\bs_{\kappa,w}$ and $b_{\kappa,w}$ are $p$-blocks of index $2$
subgroups with $p$ odd, Clifford theory ensures that the $p$-height
preservation established for $\B_{\kappa,w}$ and $B_{\kappa,w}$ descends
to these $p$-blocks as well.

    Finally, using~\cite[Lemma (13.2)]{olsson} together with point (i) of
Theorem~\ref{thm:blockhat}, we conclude that the two $p$-blocks share the
same $p$-defect. Since, moreover, $\Phi$ preserves the $p$-height, the
defect of each character is also preserved by $\Phi$.
\end{proof}

\subsection{Bijections between $p$-blocks of covering groups of the
symmetric and alternating groups}

    Let $\kappa$ and $\kappa'$ be two $p$-bar core partitions of size $r$
and $r'$, respectively. Let $w$ be a positive integer, and $p$ be a prime
number. We set
$$n=pw+r\quad\text{and}\quad n'=pw+r'.$$
    We denote by $\Psi$ the map that associates to a bar-partition
$\lambda$ of $n$ (with $p$-bar core $\kappa$ and bar $p$-weight $w$), the
bar-partition $\Psi(\lambda)$ of $n'$, whose Littlewood map is given by
$$(\kappa',\lambda^{(\overline p)}),$$
where $\lambda^{(\overline p)}$ is the $p$-bar quotient of $\lambda$.

\subsubsection{The non-crossing case}

\begin{proposition}
\label{prop:defpsi}
    Assume that $\kappa$ and $\kappa'$ have the same sign. Then the map
$\Psi$ induces a bijection between the $p$-blocks $\B_{\kappa,w}$ and
$\B_{\kappa',w}$ of $\tSym_n$ and $\tSym_{n'}$, and between the two
$p$-blocks $\bs_{\kappa,w}$ and $\bs_{\kappa',w}$ of $\tAlt_n$ and
$\tAlt_{n'}$.
\end{proposition}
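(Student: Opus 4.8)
The plan is to reduce the statement to the bijectivity of the $p$-bar Littlewood map recalled in \S\ref{subsec:barabaque} together with the sign formula of Corollary~\ref{cor:signlitteldec}. First I would record that $\Psi$ is well defined and bijective at the level of labels. Indeed, by construction $\Psi(\lambda)$ is the unique bar-partition whose $p$-bar core is $\kappa'$ and whose $p$-bar quotient equals $\lambda^{(\overline p)}$, so the inverse of $\Psi$ is the analogous map with the roles of $\kappa$ and $\kappa'$ exchanged. Since $\Psi$ preserves the $p$-bar quotient, it preserves the bar $p$-weight by~(\ref{eq:pweightcar}), and by~(\ref{eq:sizerelation}) one has $|\Psi(\lambda)|=|\kappa'|+pw=n'$, so $\Psi$ indeed lands in $\mathcal D_{n'}$. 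Invoking the Morris conjecture as discussed after~(\ref{eq:blocktilde}), the bar-partitions labeling $\B_{\kappa,w}$ (resp.\ $\B_{\kappa',w}$) are exactly those of $n$ (resp.\ $n'$) with $p$-bar core $\kappa$ (resp.\ $\kappa'$) and bar $p$-weight $w$; hence $\Psi$ restricts to a bijection between these two label sets.

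The key point is then to promote this to an honest bijection of the character sets, $\xi_\lambda\mapsto\xi_{\Psi(\lambda)}$ and $\xi_\lambda^{\pm}\mapsto\xi_{\Psi(\lambda)}^{\pm}$, which requires $\sgn(\lambda)=\sgn(\Psi(\lambda))$, so that $\lambda$ and $\Psi(\lambda)$ label the same number of spin characters of the same type (for the two-element orbits one may pair the $+$ and $-$ members arbitrarily). Here I would use that $\Psi(\lambda)$ and $\lambda$ share the same $p$-bar quotient, hence also the same $p$-bar cocore, so that $\sgn(\Psi(\lambda)^{[\overline p]})=\sgn(\lambda^{[\overline p]})$. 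Corollary~\ref{cor:signlitteldec} then gives
$$\sgn(\Psi(\lambda))=\sgn(\kappa')\,\sgn(\lambda^{[\overline p]}),\qquad \sgn(\lambda)=\sgn(\kappa)\,\sgn(\lambda^{[\overline p]}),$$
and the hypothesis $\sgn(\kappa)=\sgn(\kappa')$ forces $\sgn(\lambda)=\sgn(\Psi(\lambda))$. This produces the desired bijection $\B_{\kappa,w}\to\B_{\kappa',w}$.

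For the alternating groups I would pass through Clifford theory as recalled in \S\ref{subsec:index2}: since $\Psi$ preserves the property of labeling a self-associate or a non-self-associate spin character, it matches up the $\langle\varepsilon\rangle$-orbits of $\B_{\kappa,w}$ and $\B_{\kappa',w}$ compatibly with restriction to the index-two subgroup $\tAlt$. As recalled after~(\ref{eq:blocktilde}), $\bs_{\kappa,w}$ (resp.\ $\bs_{\kappa',w}$) is precisely the set of constituents of the restrictions to $\tAlt_n$ (resp.\ $\tAlt_{n'}$) of the characters in $\B_{\kappa,w}$ (resp.\ $\B_{\kappa',w}$), and it is a single $p$-block by \cite[Theorem~9.2]{Navarro}; hence the bijection above descends to a bijection $\bs_{\kappa,w}\to\bs_{\kappa',w}$.

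I expect the only genuinely delicate step to be the sign bookkeeping in the second paragraph: everything else is formal once the Littlewood map is known to be bijective and the blocks are described purely by core-plus-weight data. This is also exactly the place where the \emph{same sign} hypothesis is indispensable, and its failure is what will force the separate treatment of the crossing cases in the subsequent subsections.
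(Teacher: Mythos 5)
Your proposal is correct and follows essentially the same route as the paper: the crucial step in both is that $\lambda$ and $\Psi(\lambda)$ share the same $p$-bar cocore, so Corollary~\ref{cor:signlitteldec} together with $\sgn(\kappa)=\sgn(\kappa')$ gives $\sgn(\lambda)=\sgn(\Psi(\lambda))$, whence corresponding labels carry the same number and type (self-associate or associate pair) of spin characters in both the symmetric and alternating settings. Your additional verifications (bijectivity at the level of labels via the Littlewood map and the Clifford-theoretic descent to $\tAlt$) are routine points the paper leaves implicit.
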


\begin{proof}
    A bar-partition $\lambda$ with $p$-bar core $\kappa$ and $p$-bar
quotient $\lambda^{(\overline p)}$ labels either a single character in
$\B_{\kappa,w}$ if $\sgn(\lambda)=1$, or a pair of associate characters if
$\sgn(\lambda)=-1$. The same statement holds for the block $\bs_{\kappa,
w}$. By Corollary~\ref{cor:signlitteldec}, we have
$$\sgn(\lambda)=\sgn(\kappa)\sgn(\lambda^{[\,\overline p\,
]})=\sgn(\kappa')\sgn(\lambda^{[\,\overline p\,]})=\sgn(\Psi(\lambda)),$$ 
since $\sgn(\kappa)=\sgn(\kappa')$ by assumption. The result then follows.
\end{proof}

\begin{remark}
\label{rk:defPsi}
    The maps defined in the previous proposition are compatible with
Clifford theory between $\tSym_n$ and $\tAlt_n$, and $\tSym_{n'}$ and
$\tAlt_{n'}$. We denote these maps by the same symbol $\Psi$ in both
settings, for simplicity.
\end{remark}

\begin{theorem}
\label{thm:blocksmemesigne}
We assume that $\sgn(\kappa)=\sgn(\kappa')$, and that
$$\tau(\kappa,\sigma_p)=\tau(\kappa',\sigma_p),$$ 
where $\tau(\kappa,\sigma_p)$ is defined in~\S\ref{subsec:actionspin}.
Then the maps defined in Proposition~\ref{prop:defpsi} are $\mathcal
H$-equivariant, and preserve both the $p$-height and the defect of each
character and its image under $\Psi$.
\end{theorem}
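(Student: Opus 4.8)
The plan is to realise $\Psi$ as a conjugate of a combinatorially transparent ``core-swap'' map through the correspondence $\Phi$ of Theorem~\ref{thm:bijblockstildehump}. Let $\Phi_\kappa\colon\B_{\kappa,w}\to B_{\kappa,w}$ and $\Phi_{\kappa'}\colon\B_{\kappa',w}\to B_{\kappa',w}$ be the bijections of that theorem (and their $\tAlt$/$G^{+}$ analogues), and let $\Theta\colon B_{\kappa,w}\to B_{\kappa',w}$ be defined by $\zeta_{\kappa,\mu}\leftrightarrow\zeta_{\kappa',\mu}$ and $\zeta_{\kappa,\mu}^{\pm}\leftrightarrow\zeta_{\kappa',\mu}^{\pm}$ for $\mu\in\mathcal B_w$; since $\sgn(\kappa)=\sgn(\kappa')$ forces $\zeta_{\kappa,\mu}$ and $\zeta_{\kappa',\mu}$ to have the same associativity type, $\Theta$ is well defined. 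Using that $\Psi(\lambda)$ has $p$-bar core $\kappa'$ and the same $p$-bar quotient, hence the same $p$-bar cocore $\lambda^{[\overline p]}$, as $\lambda$, I will first check that $\Psi=\Phi_{\kappa'}^{-1}\circ\Theta\circ\Phi_\kappa$, both for $\tSym$/$G$ and for $\tAlt$/$G^{+}$. As $\Phi_\kappa$ and $\Phi_{\kappa'}$ are already $\mathcal H$-equivariant, $p$-height-preserving and defect-preserving by Theorem~\ref{thm:bijblockstildehump}, everything reduces to establishing these three properties for $\Theta$.

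Preservation of $p$-height and of defect by $\Theta$ follows at once from Theorem~\ref{thm:blockhat}: part (ii) gives $\operatorname{ht}(\zeta_{\kappa,\mu})=\operatorname{ht}(\xi_\mu)=\operatorname{ht}(\zeta_{\kappa',\mu})$, and part (i) equips $B_{\kappa,w}$ and $B_{\kappa',w}$ with the common defect group of $B_{\emptyset,w}$, hence with a common block defect $d$. Since $r,r'<p$ one has $\nu_p(|\tSym_r\widehat\times\tSym_{pw}|)=\nu_p\bigl(2\,r!\,(pw)!\bigr)=\nu_p\bigl((pw)!\bigr)=\nu_p(|\tSym_{r'}\widehat\times\tSym_{pw}|)$, so from $\operatorname{def}(\chi)=d-\operatorname{ht}(\chi)=\nu_p(|G|)-\nu_p(\chi(1))$ the preservation of the $p$-height yields preservation of the defect of each character; the same holds over $G^{+}$.

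The substance of the argument is the $\mathcal H$-equivariance of $\Theta$. According to the computations in the proof of Theorem~\ref{thm:main} — equations~(\ref{eq:relationbarcore}), (\ref{eq:relationbarcore2}) and (\ref{eq:movepreuve}) — for $f\in\mathcal H$ the sign controlling the $f$-action on the character(s) labelled by $(\kappa,\mu)$ equals $\tau(\kappa,f)\,\tau(\mu,f)$ unless $\sgn(\kappa)=\sgn(\mu)=-1$, in which case it is $\tau(i,f)\,\tau(\kappa,f)\,\tau(\mu,f)$; in all remaining situations the characters in question are integer-valued and $\mathcal H$-fixed. Because $\sgn(\kappa)=\sgn(\kappa')$, the two labels $(\kappa,\mu)$ and $(\kappa',\mu)$ always fall in the same case, so $\Theta$ will be $\mathcal H$-equivariant precisely when $\tau(\kappa,f)=\tau(\kappa',f)$ for all $f\in\mathcal H$. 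For $f=\sigma_p$ this is the hypothesis $\tau(\kappa,\sigma_p)=\tau(\kappa',\sigma_p)$. For $f$ trivial on $p'$-roots of unity, a $p$-bar core carries only white beads on the first runner of its (twisted) $p$-bar-abacus, so it has no part divisible by $p$; then $a_\kappa$ (resp.\ $b_\kappa$) equals $\sqrt{\prod_i\kappa_i}$ up to $p'$-roots of unity, and $\sqrt{\prod_i\kappa_i}\in\Q(\zeta_m)$ for some $m$ prime to $p$, whence $f$ fixes it and $\tau(\kappa,f)=1=\tau(\kappa',f)$. As $\mathcal H$ is generated by $\sigma_p$ together with the automorphisms trivial on $p'$-roots, and $f\mapsto\tau(\alpha,f)$ is multiplicative for $\alpha$ of degree $2$, this gives $\tau(\kappa,f)=\tau(\kappa',f)$ for every $f\in\mathcal H$; hence $\Theta$ is $\mathcal H$-equivariant, and by the factorisation above together with Remark~\ref{rk:defPsi} so is $\Psi$, completing the proof for both $\B_{\kappa,w}\to\B_{\kappa',w}$ and $\bs_{\kappa,w}\to\bs_{\kappa',w}$.

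The delicate point — and the only place both hypotheses are genuinely used — is reconciling the exceptional sign $(-1)^{(p-1)/2}$ of Theorem~\ref{thm:little}(ii), equivalently the factor $\tau(i,f)$ in the ``both signs $-1$'' formula, between a character and its $\Psi$-image. The assumption $\sgn(\kappa)=\sgn(\kappa')$ ensures that $\lambda$ and $\Psi(\lambda)$, which share a $p$-bar quotient and hence the sign of their $p$-bar cocore, always land in the same case of Theorem~\ref{thm:little}, so this sign appears for both or for neither and cancels; the assumption $\tau(\kappa,\sigma_p)=\tau(\kappa',\sigma_p)$ then matches the remaining $p$-bar core contributions. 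Everything else is bookkeeping with the character-value formulas already recorded in Sections~\ref{sec:humphreys} and~\ref{sec:main}.
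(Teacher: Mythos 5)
Your proof is correct and follows essentially the same route as the paper: it likewise factors $\Psi$ as $\Phi'^{-1}\circ\vartheta\circ\Phi$ with $\vartheta$ the core-swap map $\zeta_{\kappa,\mu}\mapsto\zeta_{\kappa',\mu}$ (well defined because $\sgn(\kappa)=\sgn(\kappa')$), deduces height and defect preservation from Theorem~\ref{thm:blockhat}, and obtains $\mathcal H$-equivariance from the factorizations (\ref{eq:relationbarcore}), (\ref{eq:relationbarcore2}), (\ref{eq:movepreuve}) together with the fact that $\tau(\kappa,\sigma_p)=\tau(\kappa',\sigma_p)$ forces $\tau(\kappa,f)=\tau(\kappa',f)$ for every $f\in\mathcal H$. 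One small remark: your aside that $\nu_p(r!)=0$ presupposes $r,r'<p$, which is not part of the hypotheses, but it is also not needed, since the identity $\operatorname{ht}(\chi)=\nu_p(\chi(1))+d-\nu_p(|G|)$, i.e. $d-\operatorname{ht}(\chi)=\nu_p(|G|)-\nu_p(\chi(1))$, with the common block defect $d$ provided by Theorem~\ref{thm:blockhat}(i), already yields preservation of the defect of each character.
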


\begin{proof}
    Denote by $\Phi$ the map defined in Theorem~\ref{thm:bijblockstildehump} 
between the blocks $\B_{\kappa,w}$ and $B_{\kappa,w}$. Denote by $\Phi'$
the analog map between the blocks $\B_{\kappa',w}$ and $B_{\kappa',w}$.
Define
$\vartheta:B_{\kappa,w}\longrightarrow B_{\kappa',w}$ by setting
$$\vartheta(\zeta_{\kappa,\mu})=\zeta_{\kappa',\mu}\quad \text{if }
\zeta_{\kappa,\mu} \text{ is self-associated,}$$
and
$$\vartheta(\zeta_{\kappa,\mu}^\pm)=\zeta_{\kappa',\mu}^\pm\quad
\text{otherwise.}$$
    Note that this map is well-defined, since $\kappa$ and $\kappa'$ have
the same sign. Moreover, by Equation~(\ref{eq:relationbarcore}) and the
subsequent discussion, it follows that $\vartheta$ is $\mathcal
H$-equivariant since $\tau(\kappa,\sigma_p)=\tau(\kappa',\sigma_p)$. Then,
Theorem~\ref{thm:blockhat} implies that $\vartheta$ preserves the defect
group of the $p$-block, as well as the $p$-height and the defect of
characters.

Now, observe that
$$\Psi=\Phi'^{-1}\circ \vartheta\circ \Phi.$$
We conclude using Theorem~\ref{thm:bijblockstildehump}. A similar argument
applies to the blocks $\bs_{\kappa,w}$ and $\bs_{\kappa',w}$.
\end{proof}

\begin{remark}
\label{rk:perfectdirect}
    In~\cite[Theorem 4.21, Corollary 4.22]{BrGr3}, it is proved that under
the assumptions of Theorem~\ref{thm:blocksmemesigne}, the $p$-blocks
$\B_{\kappa,w}$ and $\B_{\kappa',w}$, as well as $\bs_{\kappa,w}$ and
$\bs_{\kappa',w}$, are perfectly isometric. A perfect isometry is
explicitly constructed, and we observe that, up to a sign, this is exactly
the map $\Psi$ defined in Remark~\ref{rk:defPsi}. Since any sign is fixed
by any Galois automorphisms, Theorem~\ref{thm:blocksmemesigne} implies
that the perfect isometries 
$$I:\B_{\kappa,w}\longrightarrow \B_{\kappa',w}\quad\text{and}\quad I:\
\bs_{\kappa,w}\longrightarrow \bs_{\kappa',w}$$
constructed in \cite{BrGr3} are $\mathcal H$-equivariant.
\end{remark}

\begin{remark} 
    Assume that $0\leq r\leq p-1$ and $0\leq r'\leq p-1$. Then, by
Theorem~\ref{thm:conserveval}, the degrees of any character in
$\B_{\kappa,w}$ (or in $\bs_{\kappa,w}$) and its image by $\Psi$ have the
same $p$-valuation. Since the spin $p'$-characters of $\tSym_n$ (or
$\tAlt_n$) lie in such blocks (namely, the characters of $p$-height $0$ in
this case), this result is in line with Navarro's conjecture, which
predicts, at the global level, the existence of an
$\mathcal{H}$-equivariant bijection between the sets of spin
$p'$-characters of the groups $\tSym_n$ and $\tSym_{n'}$.

    Furthermore, Theorem~\ref{thm:conserveval} shows that this
compatibility extends beyond the $p'$-case: the map $\Psi$ preserves the
$p$-valuation of character degrees for all spin characters in the relevant
blocks, and commutes with the action of $\mathcal H$. This provides a
Navarro-type extension, at the global level, in the context of double
covering groups.
\end{remark}
\subsubsection{The crossing case}

\begin{proposition}
\label{prop:defpsi2}
    Assume that $\kappa$ and $\kappa'$ have opposite signs. Then the map
$\Psi$ induces a bijection between the $p$-blocks $\B_{\kappa,w}$ and
$\bs_{\kappa',w}$ of $\tSym_n$ and $\tAlt_{n'}$.
\end{proposition}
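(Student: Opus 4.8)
The plan is to mirror the proof of Proposition~\ref{prop:defpsi}. The new feature is that, since $\kappa$ and $\kappa'$ now have opposite signs, the map $\Psi$ reverses the sign of a bar-partition; this interchanges the ``self-associate'' and ``non-self-associate'' cases, but that interchange is exactly compensated by the passage from $\tSym_{n'}$ to $\tAlt_{n'}$, so that on both sides a given $p$-bar quotient still contributes the same number of spin characters.

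First I would record the two parametrizations. By Schur's parametrization (see~\S\ref{subsec:index2} and~(\ref{eq:xidef})), the spin block $\B_{\kappa,w}$ of $\tSym_n$ consists of the characters $\xi_\lambda$ for those $\lambda$ with $\sgn(\lambda)=1$, together with the associate pairs $\xi_\lambda^{+},\xi_\lambda^{-}$ for those $\lambda$ with $\sgn(\lambda)=-1$, where $\lambda$ runs over the bar-partitions of $n$ with $p$-bar core $\kappa$ and bar $p$-weight $w$. By the discussion following~(\ref{eq:blocktilde}), combined with Clifford theory between $\tSym_{n'}$ and $\tAlt_{n'}$, the spin block $\bs_{\kappa',w}$ of $\tAlt_{n'}$ consists symmetrically of the characters $\xi_\mu$ for those $\mu$ with $\sgn(\mu)=-1$, together with the associate pairs $\xi_\mu^{+},\xi_\mu^{-}$ for those $\mu$ with $\sgn(\mu)=1$, where $\mu$ ranges over the bar-partitions of $n'$ with $p$-bar core $\kappa'$ and bar $p$-weight $w$. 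Since the $p$-bar Littlewood map $\lambda\mapsto(\lambda_{(\overline p)},\lambda^{(\overline p)})$ is a bijection and $\Psi$ merely replaces the $p$-bar core $\kappa$ by $\kappa'$ while keeping the $p$-bar quotient, $\Psi$ restricts to a bijection between these two indexing sets of bar-partitions; in particular $\Psi(\lambda)$ has the same $p$-bar cocore $\lambda^{[\overline p]}$ as $\lambda$.

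Then I would invoke Corollary~\ref{cor:signlitteldec} for both $\lambda$ and $\Psi(\lambda)$ to obtain
$$\sgn(\Psi(\lambda))=\sgn(\kappa')\,\sgn(\lambda^{[\overline p]})=-\,\sgn(\kappa)\,\sgn(\lambda^{[\overline p]})=-\,\sgn(\lambda),$$
using $\sgn(\kappa)=-\sgn(\kappa')$. Hence if $\sgn(\lambda)=1$ then $\lambda$ labels a single character $\xi_\lambda\in\B_{\kappa,w}$ while $\Psi(\lambda)$, having sign $-1$, labels a single character $\xi_{\Psi(\lambda)}\in\bs_{\kappa',w}$; and if $\sgn(\lambda)=-1$ then $\lambda$ labels the pair $\xi_\lambda^{\pm}\in\B_{\kappa,w}$ while $\Psi(\lambda)$, having sign $+1$, labels the pair $\xi_{\Psi(\lambda)}^{\pm}\in\bs_{\kappa',w}$. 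In each case $\lambda$ and $\Psi(\lambda)$ label equinumerous sets of characters, so, fixing any convention for the $\pm$-labels (whose precise choice is immaterial for the present statement), the assignments $\xi_\lambda\mapsto\xi_{\Psi(\lambda)}$ and $\xi_\lambda^{\pm}\mapsto\xi_{\Psi(\lambda)}^{\pm}$ define the claimed bijection between $\B_{\kappa,w}$ and $\bs_{\kappa',w}$. The only step requiring genuine care is this bookkeeping of associativity types across the crossover from $\tSym_n$ to $\tAlt_{n'}$; once the double sign change is tracked correctly, there is no substantive obstacle.
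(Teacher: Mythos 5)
Your proof is correct and follows essentially the same route as the paper: apply Corollary~\ref{cor:signlitteldec} to get $\sgn(\Psi(\lambda))=-\sgn(\lambda)$, then match the associativity types case by case, using Clifford theory between $\tSym_{n'}$ and $\tAlt_{n'}$ to see that a sign-$1$ partition on one side and a sign-$(-1)$ partition on the other contribute the same number of spin characters. The only difference is expository — you spell out the parametrization of $\bs_{\kappa',w}$ and the bijectivity of $\Psi$ on labels more explicitly than the paper does.
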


\begin{proof}
    Let $\lambda$ be a bar-partition labeling character(s) in
$\B_{\kappa,w}$. As in the proof of Proposition~\ref{prop:defpsi}, we
have
$$\sgn(\lambda)=-\sgn(\Psi(\lambda)).$$ 
Suppose first that $\sgn(\lambda)=1$. Then $\lambda$ labels a single
character of $\B_{\kappa,w}$. Since $\sgn(\Psi(\lambda))=-1$, the bar
partition $\Psi(\lambda)$ labels two associate spin characters of
$\tSym_{n'}$, which restrict to the same character of $\tAlt_{n'}$ by
Clifford theory. According to our notation, this character is denoted by
$\xi_{\Psi(\lambda)}\in \Irr(\tAlt_{n'})$.

    Now, suppose that $\sgn(\lambda)=-1$. Then $\lambda$ labels two
associate spin characters of $\B_{\kappa,w}$. The image $\Psi(\lambda)$
has sign $1$, and hence labels a self-associate character of
$\tSym_{n'}$. By Clifford theory, this character restricts to two
associate characters of $\tAlt_{n'}$. This proves the result.
\end{proof}

\begin{theorem}
\label{thm:crossing}
Assume that $\sgn(\kappa)=-1$ and $\sgn(\kappa'=1)$. Assume that 
$$\tau(\kappa,\sigma_p)=\tau(\kappa',\sigma_p).$$ Then the
map defined in Proposition~\ref{prop:defpsi2} is $\mathcal H$-equivariant,
and preserves both the $p$-height and the defect of each character and its
image under $\Psi$.
\end{theorem}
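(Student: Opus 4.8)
The plan is to mirror the proof of Theorem~\ref{thm:blocksmemesigne}, factoring $\Psi$ through the Humphreys products but replacing the target by its alternating avatar. Put $G=\tSym_r\widehat\times\tSym_{pw}$ and $G'=\tSym_{r'}\widehat\times\tSym_{pw}$, and let $\Phi\colon\B_{\kappa,w}\to B_{\kappa,w}$ and $\Phi'\colon\bs_{\kappa',w}\to b_{\kappa',w}$ be the bijections of Theorem~\ref{thm:bijblockstildehump}; each is $\mathcal H$-equivariant and preserves $p$-heights and $p$-defects, the target of $\Phi'$ lying in $\Irr(G^{\prime+})$. I will introduce a bijection $\vartheta\colon B_{\kappa,w}\to b_{\kappa',w}$ sending the character(s) labelled by $(\kappa,\mu)$ to the character(s) labelled by $(\kappa',\mu)$, check that it is well defined, $\mathcal H$-equivariant and $p$-height/$p$-defect preserving, and then observe by a label chase (using the definition of $\Psi$ in Proposition~\ref{prop:defpsi2}) that $\Psi=\Phi'^{-1}\circ\vartheta\circ\Phi$ up to the choice of $\pm$-labels, which is immaterial since all three maps are $\mathcal H$-equivariant. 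The theorem then follows.

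For well-definedness: because $\sgn(\kappa)=-1$ and $\sgn(\kappa')=1$, for $\mu\in\mathcal B_w$ the Humphreys product $\zeta_{\kappa,\mu}$ is self-associate in $G$ exactly when $\sgn(\mu)=-1$, and $\zeta_{\kappa',\mu}$ is self-associate in $G^{\prime+}$ also exactly when $\sgn(\mu)=-1$; so both sides have the same association type and $\vartheta$ is a bijection. For $p$-heights and $p$-defects I will quote Theorem~\ref{thm:blockhat}: $B_{\kappa,w}$ and $b_{\kappa',w}$ have the same defect group as $B_{\emptyset,w}$, hence the same defect, and the character(s) of $B_{\kappa,w}$ labelled by $(\kappa,\mu)$ and those of $b_{\kappa',w}$ labelled by $(\kappa',\mu)$ both share the $p$-height of the character(s) of $\tSym_{pw}$ (resp.\ $\tAlt_{pw}$) labelled by $\mu$ --- two values that agree since $[\tSym_{pw}:\tAlt_{pw}]=2$ is prime to $p$. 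Height preservation together with equality of the block defects then gives preservation of each character's $p$-defect.

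The heart of the argument is $\mathcal H$-equivariance of $\vartheta$, which I split according to $\sgn(\mu)$. If $\sgn(\mu)=-1$, then by~(\ref{eq:calvalnon}) the character $\zeta_{\kappa,\mu}$ vanishes off $\tAlt_r\times\tAlt_{pw}$, where it takes the integer value $2\xi_\kappa\xi_\mu$, while~(\ref{eq:charvalselfnonself}) and~(\ref{eq:charvalselfnonself2}) show that $\zeta_{\kappa',\mu}\in\Irr(G^{\prime+})$ is likewise integer-valued; thus both are fixed by $\mathcal H$. If $\sgn(\mu)=1$, then $\zeta_{\kappa,\mu}$ is non-self-associate in $G$ and $\zeta_{\kappa',\mu}$ is non-self-associate in $G^{\prime+}$; writing $f(\zeta_{\kappa,\mu}^{\pm})=\zeta_{\kappa,\mu}^{\pm\tau(\zeta_{\kappa,\mu},f)}$ and similarly for $\zeta_{\kappa',\mu}$, the computation behind~(\ref{eq:relationbarcore}) (from~(\ref{eq:charvalselfnonself3}) and Theorem~\ref{thm:little}(i)) gives $\tau(\zeta_{\kappa,\mu},f)=\tau(\kappa,f)\tau(\mu,f)$, while the computation behind~(\ref{eq:relationbarcore2}) (from~(\ref{eq:valdiffG+self})) gives $\tau(\zeta_{\kappa',\mu},f)=\tau(\kappa',f)\tau(\mu,f)$. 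So equivariance of $\vartheta$ reduces to the identity $\tau(\kappa,f)=\tau(\kappa',f)$ for all $f\in\mathcal H$. For $f=\sigma_p$ this is precisely the hypothesis. For $f$ acting trivially on the $p'$-roots of unity, I will use that a $p$-bar core carries no bead on the runner of residue $0$, hence has no part divisible by $p$; consequently the degree-two numbers $a_\kappa,b_\kappa$ (and $a_{\kappa'},b_{\kappa'}$) --- products of $\sqrt2$, a power of $i$, and $\sqrt{\prod_j\kappa_j}$ with $\prod_j\kappa_j$ prime to $p$ --- lie in a cyclotomic field of conductor prime to $p$, so are fixed by $f$, yielding $\tau(\kappa,f)=\tau(\kappa',f)=1$.

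I expect the sign bookkeeping in the case $\sgn(\mu)=1$ to be the main obstacle: one must keep track that the difference character governing the Galois action is of ``(non-self)$\times$(self)'' type on $G$ but of ``(self)$\times$(self)'' type on $G^{\prime+}$, so that the two Galois signs differ exactly by $\tau(\kappa,f)\tau(\kappa',f)$, and that matching them therefore comes down only to $\tau(\kappa,\sigma_p)=\tau(\kappa',\sigma_p)$ and not to a stronger condition. This asymmetry is exactly what makes the opposite crossover (Remark~\ref{rk:fails}) fail: there the analogous computation would force a constraint on the cores that need not hold.
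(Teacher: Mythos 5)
Your proposal is correct and follows essentially the same route as the paper: factor $\Psi$ as $\Phi'^{-1}\circ\vartheta\circ\Phi$ using Theorem~\ref{thm:bijblockstildehump}, prove $\mathcal H$-equivariance of the relabelling map $\vartheta$ by the same case split (integral values via (\ref{eq:calvalnon})/(\ref{eq:valG+}) when $\sgn(\mu)=-1$, and the sign formulas (\ref{eq:relationbarcore}) versus (\ref{eq:relationbarcore2}) reducing to $\tau(\kappa,f)=\tau(\kappa',f)$ when $\sgn(\mu)=1$), and get heights and defects from Theorem~\ref{thm:blockhat}. The only difference is that you spell out why $\tau(\kappa,f)=\tau(\kappa',f)=1$ for $f$ trivial on $p'$-roots (a point the paper asserts without detail), which is a welcome addition, not a deviation.
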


\begin{proof}
    Let $\Phi$ be the map defined in Theorem~\ref{thm:bijblockstildehump}
between the blocks $\B_{\kappa,w}$ and $B_{\kappa,w}$, and let $\Phi'$
denote the analogous map between the blocks $\bs_{\kappa',w}$ and
$b_{\kappa',w}$. Define a map $\vartheta:B_{\kappa,w}\longrightarrow
b_{\kappa',w}$ by the same formula as in the proof of
Theorem~\ref{thm:blocksmemesigne}.
    This map is well-defined, since $\kappa$ and $\kappa'$ have opposite
signs. Furthermore, by Theorem~\ref{thm:blockhat}, $\vartheta$ preserves
the defect group of the $p$-block, as well as the $p$-height and the
defect of characters. 

    Note first that the assumption
$\tau(\kappa,\sigma_p)=\tau(\kappa',\sigma_p)$ is equivalent to 
$\tau(\kappa,f)=\tau(\kappa',f)$ for any $f\in\mathcal H$.

    Let $\lambda$ be a bar-partition. Suppose first that $\lambda$ labels
a self-associate character of $\B_{\kappa,w}$. In particular,
$\sgn(\lambda)=1$, and by Corollary~\ref{cor:signlitteldec} we have
$\sgn(\lambda^{[\,\overline p\,]})=-1$ since $\sgn(\kappa)=-1$. In this
case, the proof of Theorem~\ref{thm:main} shows that 
$$\tau(\zeta_{\kappa,\lambda^{[,\overline
p\,]}},f)=1.$$ 
    Moreover, $\sgn(\Psi(\lambda))=-1$, and Equation~(\ref{eq:valG+})
implies that 
$$\tau(\zeta_{\kappa',\lambda^{[\,\overline p\,]}},f)=1.$$

    Now suppose that $\lambda$ has sign $-1$, so that $\lambda$ labels two
associate characters of $\B_{\kappa,w}$. Then, by
Corollary~\ref{cor:signlitteldec}, we have $\sgn(\lambda^{[\overline
p]})=1$ . Then
$$\sgn(\kappa')=1=\sgn(\lambda^{[\overline p]}),$$ 
and it follows from Equations~(\ref{eq:relationbarcore}) and
(\ref{eq:relationbarcore2}) that
$$\tau(\zeta_{\kappa',\lambda^{[\overline
p]}},f)=\tau(\kappa',f)\tau(\lambda^{[\overline
p]},f)=\tau(\kappa,f)\tau(\lambda^{[\overline
p]},f)=\tau(\zeta_{\kappa,\lambda^{[\overline p]}},f),$$
since $\tau(\kappa,f)=\tau(\kappa',f)$.
Hence, the map $\varphi$ is $\mathcal H$-equivariant. Finally, since
$\Psi=\Phi'^{-1}\circ\varphi\circ \Phi$, the result follows.
\end{proof}

\begin{remark}
\label{rk:perfectcross}
    Theorem~\ref{thm:crossing} also admits an interpretation in terms of
perfect isometries. Indeed, by \cite[Theorem 4.21]{BrGr3}, the $p$-blocks
$B_{\kappa,w}$ and $b_{\kappa',w}$ are perfectly isometric. Moreover, the
perfect isometry constructed there coincides with the map defined in
Theorem~\ref{thm:crossing}, up to a sign. As a consequence, the perfect
isometry of \cite{BrGr3} (constructed under the assumption $\sgn(\kappa) =
-1 = -\sgn(\kappa')$) is $\mathcal{H}$-equivariant.
\end{remark}

\begin{remark}
\label{rk:fails}
    Note that the Kessar–Schaps crossing conjecture, whose weaker form is
proved in \cite{BrGr3}, also includes the cases where $\sgn(\kappa) = 1$
and $\sgn(\kappa') = -1$. Surprisingly, the perfect isometry constructed
in \cite{BrGr3} in this setting does not commute with the action of
$\mathcal H$. The issue arises when $\Psi(\lambda)$ labels two associate
characters of $\tAlt_n$. In this case, the sign $(-1)^{(p-1)/2}$ appears
in Formula~(\ref{eq:movepreuve}), and it is not compatible with the
formula given in (\ref{eq:relationbarcore}).
\end{remark}

\subsection{The case of the non-spin $p$-blocks}
\label{subsec:nonspin}

    In what follows, $p$ continues to denote an odd prime number. 
We identify the non-spin irreducible characters of $\tSym_n$ and $\tAlt_n$
with those of $\Sym_n$ and its index-two subgroup $\Alt_n$, respectively.
Recall that the irreducible characters of $\Sym_n$ are parametrized by the
partitions of $n$, and we write $\chi_{\lambda}$ for the character labeled
by $\lambda$. The sign character $\varepsilon$ of $\Sym_n$ acts on
$\Irr(\Sym_n)$, partitioning it into orbits of size either one or two. By
Clifford theory (see Subsection~\ref{subsec:index2}), these orbits
correspond to orbits of the conjugation action of $\Sym_n / \Alt_n$ on
$\Irr(\Alt_n)$. 
    More precisely, an orbit of size two in $\Irr(\Sym_n)$ corresponds to
a single irreducible character of $\Alt_n$ obtained by restricting a
character from $\Sym_n$. An orbit of size one in
$\Irr(\Sym_n)$ corresponds to a pair of distinct, conjugate irreducible
characters of $\Alt_n$ whose induction recovers the character in $\Sym_n$.

    It is well known that size-one orbits in $\Irr(\Sym_n)$ are precisely
those labeled by self-conjugate partitions of $n$. For such a partition
$\lambda$, the associated function $\Delta_{\lambda}$ is supported only on
the conjugacy classes of permutations whose cycle type corresponds to the
diagonal hooks of $\lambda$. In this setting, we obtain a formula
analogous to~\eqref{eq:diffAntilte}, where the part lengths are replaced
with diagonal hook lengths; see~\cite[Theorem 2.5.13]{James-Kerber}.
\smallskip

    As seen in Section~\ref{subsec:frob}, the diagonal hooks of a
partition $\lambda$ can be interpreted via the pointed abacus of
$\lambda$. More precisely, there is a bijection between the set of arms, or
equivalently, the set of black beads above the fence in the pointed
abacus, and the diagonal hooks of $\lambda$. Furthermore, if $\lambda$ is
self-conjugate, the length $d_x$ of the diagonal hook corresponding to the
arm in position $x$ is given by
$$d_x=2x+1.$$

    In fact, one can associate to $\lambda$ a pointed $p$-abacus, which
allows us to define the \emph{$p$-characteristic vector} of $\lambda$, its
$p$-core $\lambda_{(p)}$, and its $p$-quotient $\lambda^{(p)}$. A
partition with empty $p$-core is called a \emph{$p$-cocore partition}. The
$p$-cocore partition associated with $\lambda^{(p)}$ is denoted
$\lambda^{[p]}$. We refer to~\cite[\S3.1]{BrNa2} for further details.
\medskip

    This leads to a complete analogy at the combinatorial level between
self-conjugate partitions and bar-partitions, with diagonal hooks playing
the role of parts.  
    As a concrete illustration, we now present a self-conjugate analog
of Equation~(\ref{eq:longeurrelation}). First, observe that the integer
$d$ appearing in~(\ref{eq:longeurrelation}) has a natural combinatorial
interpretation. This is the number of parts that ``disappear'' when
passing from the $p$-bar core and $p$-bar cocore to the full partition via
the push-and-pull process. A direct analog exists in the case of
self-conjugate partitions, namely the integer $d$ can be interpreted as the
number of diagonal hooks that disappear when going from the $p$-core and
$p$-cocore to the partition through the push-and-pull process described
in~\cite[Theorem 3.27]{BrNa2}.

    Therefore, if we replace the bar-length $\ell(\lambda)$ with the
Durfee number $\mathfrak c(\lambda)$, that is, the number of diagonal
hooks in $\lambda$, and follow the proof of
Lemma~\ref{lemma:longueurlittelwoodec} with the same strategy, we obtain:

\begin{equation}
\label{eq:durfeewoordec}
\mathfrak{c}(\lambda)=  \mathfrak{c}(\lambda_{(\overline t)}) +
\mathfrak{c}(\lambda^{[\overline t]}) -2d.
\end{equation}
\medskip

    In the spin setting, the notion of bar-abacus pairing is defined for
$p$-bar cocore partitions, via their $p$-bar-abacus. A pair of paired
beads then corresponds to parts $d_x$ and $d_{x^*}$ of the bar-partition
$\lambda$. The key relation satisfied by these pairs, which
plays a crucial role in the proof of Theorem~\ref{thm:little}, is that
\begin{equation}
\label{eq:fondbro}
d_x + d_{x^*} \equiv 0 \mod p.
\end{equation}

    In the case of self-conjugate partitions, by interpreting the beads on
the abacus as diagonal hooks of a self-conjugate $p$-cocore partition, we
can also develop an \emph{abacus pairing theory} for $p$-cocore
partitions, in a manner analogous to that of Section~\ref{subsection:pairing}. 

    More precisely, if $\lambda$ is a self-conjugate $p$-cocore partition,
then its $p$-quotient has the form $$\lambda^{(p)} = \left(\lambda^0,
\ldots, \lambda^{(p-1)/2}, \lambda^{(p-1)/2*}, \ldots, \lambda^{0*}
\right),$$ where $\lambda^*$ denotes the conjugate partition of $\lambda$.
We observe that all the information of $\lambda$ is encoded in the first
$(p+1)/2$ components of its $p$-quotient. Since $\lambda$ is a $p$-cocore,
the abacus of each $\lambda^j$ is pointed. Moreover, for $0\leq j\leq
(p-3)/2$, if $b$ is a white bead at position $x$ below the fence in the
abacus of $\lambda^j$, then there is a corresponding black bead at
position $x$  above the fence in the abacus of $\lambda^{p-1-j}$. This
allows us to associate to each arm $x$ in the abacus of $\lambda^j$ its
corresponding leg (as the abacus is pointed), which in turn corresponds to
an arm in the abacus of $\lambda^{p-1-j}$. Since the arms parametrize the
diagonal hooks of $\lambda$, this construction yields paired diagonal
hooks.

    Now, given a pair of pairing beads $(x,x^*)$ on runners $i$ and $p-1-i$,
the corresponding diagonal hook of the $p$-cocore partition are given
by
$$
d_x = 2(px + i) + 1, \quad d_{x^*} = 2 \bigl(p x^* + (p - 1 - i)\bigr) + 1,
$$
and they satisfies the same congruence as in equation~(\ref{eq:fondbro}).
% $$
% d_x + d_{x^*} \equiv 0 \mod p.
% $$
    Hence, using equation~(\ref{eq:durfeewoordec}) together with the
abacus pairing for self-conjugate $p$-cocore partitions described above,
the mechanism of the proof applies in the same way, and shows
that for any $f \in \mathcal{H}$, and for any self-conjugate partition
$\lambda$, one has 

\begin{equation}
\label{eq:taunonspin}
\tau(\lambda,f)=\tau(\lambda_{(p)},f)\tau(\lambda^{[p]},f),
\end{equation}
where $\tau(\lambda,f)$ is defined analogously, as in
Equation~(\ref{eq:Antildemove}).

    This factorization, analogous to that obtained in the spin case,
reduces the study of the Galois action to the $p$-core and
the $p$-quotient of $\lambda$.

\medskip

    We now consider the parametrization of non-spin characters of $G$.
These are in bijection with the irreducible characters of 
$$G/\langle z
\rangle \cong \Sym_{r}\times \Sym_{pw},$$
and may therefore be naturally
identified with characters of the form 
$$
\eta_{\lambda_{(p)},\lambda^{[p]}}=\chi_{\lambda_{(p)}} \otimes \chi_{\lambda^{[p]}}, 
$$
    where $\lambda$ is a partition of $n$, and $\lambda_{(p)}$ and
$\lambda^{[p]}$ denote its $p$-core and $p$-cocore, respectively.
Moreover, if $\lambda_{(p)}$ is self-conjugate, then
$\eta_{\lambda_{(p)},\lambda^{[p]}}$ is self-associate if and only if
$\chi_{\lambda^{[p]}}$ is. In addition, $\lambda$ is self-conjugate if and
only if $\lambda_{(p)}$ is. In particular, for such a $p$-core, the
correspondence between the characters of $\Alt_n$ and those of $G^+$ given
by
$$\chi_{\lambda}\longleftrightarrow \eta_{\lambda_{(p)},\lambda^{[p]}}\quad
\text{and}\quad
\chi_{\lambda}^{\pm}\longleftrightarrow
\eta_{\lambda_{(p)},\lambda^{[p]}}^{\pm}$$
is well-defined. For the notational conventions, we refer to
\S\ref{subsec:index2}.
\medskip

    Let $\kappa$ be a symmetric $p$-core partition, and let $w$ be a
positive integer. Set $n=|\kappa|+pw$. By \emph{Nakayama's conjecture}
(now a theorem, see~\cite[\S6.1.21]{James-Kerber}), the irreducible characters
$\chi_{\lambda}$ of $\Sym_n$ with $p$-core $\kappa$ and $p$-weight
$w$, that is, such that $w=(|\lambda|-|\kappa|)/p$, belong to a single $p$-block. 
Moreover, by~\cite[Theorem 9.2]{Navarro}, the restrictions of these
characters to $\Alt_n$ also belong to a single $p$-block of $\Alt_n$, which we
denote by $b_{\kappa,w}$.
    On the other hand, the irreducible non-spin characters of $G^+$
labeled by pairs $(\kappa, \nu)$, where $\nu$ is a partition with empty
$p$-core and $p$-weight $w$, form a $p$-block of $G^+$, denoted by
$b'_{\kappa,w}$.

    The correspondence described above induces a bijection between the
blocks $b_{\kappa,w}$ and $b'_{\kappa,w}$, which preserves the defect
groups and the $p$-height of characters by \cite[(11.2) and
(11.5)]{olsson}, since $p$ is odd and coprime to
$[\Sym_n:\Alt_n]=2=[G:G^+]$. Using Equation~\eqref{eq:taunonspin}, we
further deduce that this bijection is $\mathcal H$-equivariant. \medskip

    We now turn our attention to the perfect isometries between non-spin
blocks of equal $p$-weight, constructed by Enguehard for symmetric
groups~\cite{Enguehard} and by the first author and J.\,B. Gramain for alternating
groups~\cite{BrGr3}. Our focus is on the $\mathcal H$-equivariance of
these isometries. Since the character values of symmetric groups are
integral, the only cases relevant to this question arise from blocks of
alternating groups that contain associated characters, that is, blocks
parametrized by self-conjugate $p$-cores. We will therefore concentrate on
these cases.

    Let $\kappa$ and $\kappa'$ be two self-conjugate $p$-cores, and let $w$
be a positive integer. Then the map
$$\Psi:b_{\kappa,w}\longrightarrow b_{\kappa',w}$$ 
that associates to $\chi_{\lambda}$ (or $\chi_{\lambda}^{\pm}))$ the
character $\chi_{\lambda'}$ (or $\chi_{\lambda'}^{\pm}$) where $\lambda'$
is the partition of $|\kappa'|+pw$ with $p$-core $\kappa'$ and the same
$p$-quotient as $\lambda$, is well-defined. As in the spin case, we deduce
the following result.

\begin{theorem}
\label{thm:nonspin}
 Assume that
\begin{equation}
\label{eq:condalt}
\tau(\kappa,\sigma_p)=\tau(\kappa',\sigma_p).
\end{equation}
Then $\Psi$ is $\mathcal H$-equivariant, and it preserves both the
$p$-height and the defect of each character and its image under $\Psi$.
\end{theorem}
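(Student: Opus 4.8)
The plan is to reproduce, in the non-spin setting, the three-step argument used for Theorem~\ref{thm:blocksmemesigne} and its crossing analogue Theorem~\ref{thm:crossing}, working throughout on the auxiliary group $G^+$. Let $\Phi\colon b_{\kappa,w}\longrightarrow b'_{\kappa,w}$ denote the bijection induced by the correspondence $\chi_\lambda\longleftrightarrow\eta_{\lambda_{(p)},\lambda^{[p]}}$, $\chi_\lambda^\pm\longleftrightarrow\eta_{\lambda_{(p)},\lambda^{[p]}}^\pm$ discussed just before the statement; by \cite[(11.2) and (11.5)]{olsson} it preserves defect groups and $p$-heights, and by Equation~\eqref{eq:taunonspin} it is $\mathcal H$-equivariant. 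Let $\Phi'\colon b_{\kappa',w}\longrightarrow b'_{\kappa',w}$ be the analogous bijection for $\kappa'$. I would then introduce the ``core-swap'' map $\vartheta\colon b'_{\kappa,w}\longrightarrow b'_{\kappa',w}$, defined exactly as the map $\vartheta$ in the proof of Theorem~\ref{thm:blocksmemesigne} by $\vartheta(\eta_{\kappa,\mu})=\eta_{\kappa',\mu}$ when $\eta_{\kappa,\mu}$ is self-associate and $\vartheta(\eta_{\kappa,\mu}^\pm)=\eta_{\kappa',\mu}^\pm$ otherwise, and check that $\Psi=\Phi'^{-1}\circ\vartheta\circ\Phi$. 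Since $\kappa$ and $\kappa'$ are both self-conjugate, $\eta_{\kappa,\mu}$ and $\eta_{\kappa',\mu}$ have the same associativity type --- which depends only on whether $\mu$ is self-conjugate --- so $\vartheta$ is a well-defined bijection.

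The second step is to see that $\vartheta$ preserves defect groups, $p$-heights and defects. Here I would record the straightforward non-spin analogue of Theorem~\ref{thm:blockhat}: passing to $G/\langle z\rangle\cong\Sym_r\times\Sym_{pw}$, the block carrying the characters labelled by $(\kappa,\mu)$ with $\mu$ of empty $p$-core and $p$-weight $w$ is the product block $\{\chi_\kappa\}\otimes B_{\emptyset,w}$, whose first factor has defect zero because $\kappa$ is a $p$-core. Hence its defect group, and the $p$-heights of its characters, depend only on $w$, not on the chosen self-conjugate $p$-core; and passing to the index-two subgroup $G^+$ changes none of this since $p$ is odd. Thus $\vartheta$ preserves defect groups, $p$-heights and defects.

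The heart of the proof is the $\mathcal H$-equivariance of $\vartheta$. When $\mu$ is not self-conjugate, the self-associate characters $\eta_{\kappa,\mu}$ and $\eta_{\kappa',\mu}$ of $G^+$ are rational-valued --- they are restrictions of the integer-valued characters $\chi_\kappa\otimes\chi_\mu$ and $\chi_{\kappa'}\otimes\chi_\mu$ of symmetric group products --- hence fixed by $\mathcal H$, and there is nothing to prove. When $\mu$ is self-conjugate, so that $\eta_{\kappa,\mu}^\pm$ form an associate pair, I would compute the difference character of $\chi_\kappa\otimes\chi_\mu$ using the intertwiner $T_\kappa\otimes T_\mu$ of the product representation, exactly as in the derivation of~\eqref{eq:valdiffG+self}, to get $\Delta(\eta_{\kappa,\mu})=\Delta(\chi_\kappa)\cdot\Delta(\chi_\mu)$ on the classes where it does not vanish; combined with Equation~\eqref{eq:taunonspin} applied to $\kappa$ and to $\mu$, this yields $\tau(\eta_{\kappa,\mu},f)=\tau(\kappa,f)\,\tau(\mu,f)$ for every $f\in\mathcal H$, and likewise with $\kappa'$. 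Finally, the hypothesis~\eqref{eq:condalt} that $\tau(\kappa,\sigma_p)=\tau(\kappa',\sigma_p)$ is equivalent to $\tau(\kappa,f)=\tau(\kappa',f)$ for all $f\in\mathcal H$: the diagonal hook lengths of a $p$-core are prime to $p$, so the degree-two number governing $\tau(\kappa,f)$ --- a power of $i$ times the square root of the product of those hook lengths --- is a sum of $p'$-roots of unity and is therefore fixed by every element of $\mathcal H$ acting trivially on $p'$-roots of unity, leaving only the value at $\sigma_p$ to constrain, and $f\mapsto\tau(\kappa,f)$ is a homomorphism. Hence $\tau(\eta_{\kappa,\mu},f)=\tau(\eta_{\kappa',\mu},f)$, which is exactly the $\mathcal H$-equivariance of $\vartheta$ on associate pairs.

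Putting the three steps together, $\Psi=\Phi'^{-1}\circ\vartheta\circ\Phi$ is a composite of three maps each $\mathcal H$-equivariant and preserving $p$-height and defect, so the same holds for $\Psi$. I expect the only genuinely delicate points to be the bookkeeping in the last two steps --- tracking difference characters and associativity types for the twisted product $G$ and its index-two subgroup $G^+$, the non-spin counterpart of the computations of Section~\ref{sec:humphreys} --- and the verification that the rationality condition~\eqref{eq:condalt} indeed propagates from $\sigma_p$ to all of $\mathcal H$.
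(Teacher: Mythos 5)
Your proposal is correct and follows essentially the same route as the paper: the paper deduces Theorem~\ref{thm:nonspin} ``as in the spin case,'' i.e.\ by factoring $\Psi$ through the $G^+$-block bijections and a core-swap map, using the non-spin factorization~(\ref{eq:taunonspin}), the product formula for difference characters on $G^+$, and the observation that the hypothesis $\tau(\kappa,\sigma_p)=\tau(\kappa',\sigma_p)$ propagates to all of $\mathcal H$ because the diagonal hook lengths of a $p$-core are prime to $p$. Your write-up merely makes explicit the steps the paper leaves implicit (the only cosmetic slip is invoking~(\ref{eq:taunonspin}) ``applied to $\kappa$ and to $\mu$'' in the $\vartheta$-step, where it is either unneeded or should be applied to $\lambda$ itself, as you in fact do for $\Phi$).
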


\begin{remark}
\label{rk:isoalt}
    As in Remark~\ref{rk:perfectdirect}, Theorem~\ref{thm:nonspin} implies
that the perfect isometries constructed in \cite[Theorem 3.9]{BrGr3} are
$\mathcal H$-equivariant whenever the labeling $p$-cores have the same
rationality, that is, when Condition~(\ref{eq:condalt}) is satisfied.
\end{remark}

{\bf Acknowledgements.} The first author acknowledges the support of the
ANR project Cortipom ANR-21-CE40-0019. The second author acknowledges the
support of PSC-CUNY TRADA-47-785. 
The authors would like to thank the American Institute of Mathematics
(AIM) for its generous support and hospitality during a SQuaRE program,
where substantial progress on this project was made. The authors wish to
thank  Christopher Hanusa for suggesting the terminology of $p$-cocore and
$p$-bar cocore partitions introduced in this paper.

\bibliographystyle{abbrv}
\bibliography{references_19_1}
\end{document}